\documentclass[10pt]{article}
\usepackage[english,activeacute]{babel}
\usepackage[T1]{fontenc}
\usepackage{amsmath,amssymb,amsthm,graphicx}
\topmargin -0.7cm
\oddsidemargin -0.1cm
\textwidth  15.2cm
\headheight 0.0cm
\textheight 22.6cm
\parindent  5mm
\parskip    8pt
\tolerance  1000
% \usepackage[active]{srcltx} 
% \usepackage{tikz}
% \usetikzlibrary{shapes,snakes,arrows,patterns,trees}

\newcommand{\X}{\mathcal{X}}
\newcommand{\R}{\mathbb{R}}
\newcommand{\C}{\mathbb{C}}
\newcommand{\N}{\mathbb{N}}

\newcommand{\ptl}{{\partial}}
\providecommand{\abs}[1]{\lvert#1 \rvert}
\providecommand{\pare}[1]{$($#1$)$}
\newcommand{\wh}{\widehat}
\newcommand{\E}{\mathcal E}

\newcommand{\M}{\mathcal M}
\renewcommand{\H}{\dot{H}}

\newcommand{\lp}{\textup{(}}
\renewcommand{\mod}{\textup{ mod }}
\newcommand{\rp}{\textup{)} }
\newcommand{\loc}{\operatorname{loc}}
\providecommand{\norm}[1]{\lVert#1 \rVert}

\renewcommand{\Re}{\operatorname{Re}}
\renewcommand{\Im}{\operatorname{Im}}
\newcommand{\supp}{\operatorname{supp}}
\newcommand{\ve}{\varepsilon}
\DeclareMathOperator*{\essinf}{ess\,inf}
           %valor absoluto
\providecommand{\llave}[1]{\langle\negthickspace\langle #1 \rangle\negthickspace\rangle}           %producto de llaves
%%%Mis ambientes
\newtheorem{teo}{Theorem}[section]
\newtheorem{definition}[teo]{Definition}
\newtheorem{prop}[teo]{Proposition}
\newtheorem{lema}[teo]{Lemma}

\newtheorem{cor}[teo]{Corollary}
\theoremstyle{definition}
\newtheorem{remark}[teo]{Remark}
%[section]
\newcommand{\bqq}{\begin{equation*}}
\newcommand{\eqq}{\end{equation*}}
\newcommand{\bq}{\begin{equation}}
\newcommand{\eq}{\end{equation}}
\newcommand{\grad}{\nabla}
\makeatletter
\newenvironment{ecu0}{\begin{equation}\begin{aligned}}{\end{aligned}\end{equation}\@ignoretrue}
\newenvironment{ecu}{\begin{equation}\left\lbrace\begin{aligned}}{\end{aligned}\right.\end{equation}\@ignoretrue}
\newenvironment{ecut}[1]{\begin{equation}\tag{#1}\left\lbrace\begin{aligned}}{\end{aligned}\right.\end{equation}\@ignoretrue} %%ecu con tag
\makeatother
\long\def\symbolfootnote[#1]#2{\begingroup%
\def\thefootnote{\fnsymbol{footnote}}\footnote[#1]{#2}\endgroup} 
\providecommand{\LL}[2]{\norm{#1}_{L^{#2}}}
\newcommand{\HH}[1]{\norm{#1}_{H^1}}
   %flecha de limite

\author{Andr\'e de Laire\\
{UPMC Univ Paris 06, UMR 7598}\\
{Laboratoire Jacques-Louis Lions, F-75005, Paris, France}\\
{delaire@ann.jussieu.fr}
}
\title{Global well-posedness  for a nonlocal Gross-Pitaevskii equation with non-zero condition at infinity}
\date{}
\begin{document}
\maketitle
\begin{abstract}
\noindent\emph{We study the Gross-Pitaevskii equation involving
a nonlocal interaction potential. Our aim is to give sufficient
conditions that cover  a variety of nonlocal interactions
such that the associated Cauchy problem is
globally well-posed with non-zero boundary condition at infinity,
in any dimension. We focus on even potentials
that are positive definite or positive tempered distributions.}\\

\noindent{{\bf Keywords} Nonlocal Schr\"odinger equation; Gross-Pitaevskii equation; Global well-posedness;
Initial value problem.}\\

\noindent{\bf Mathematics Subject Classification}  35Q55; 35A05; 37K05; 35Q40; 81Q99.\\

\end{abstract}

\section{Introduction}
\subsection{The problem} 
In order to describe the kinetic of a weakly interacting Bose gas
of bosons of mass $m$, Gross \cite{gross} and 
Pitaevskii \cite{pitaevskii} derived in the Hartree approximation,
{that the wavefunction $\Psi$ governing the condensate satisfies}
\begin{equation}
i\hbar\partial_t\Psi(x,t)=-\frac{\hbar^2}{2m}\Delta\Psi(x,t)+\Psi(x,t) \int_{\R^N} \abs{\Psi(y,t)}^2V(x-y)\,dy, 
\text{ on $\R^N\times \R,$}
\label{GP-full}
\end{equation}
where $N$ is the space dimension and $V$ describes the interaction between bosons.
In the most typical first approximation, $V$ is considered
as a Dirac delta function, which leads to the standard local Gross-Pitaevskii
equation. This local model with non-vanishing condition at infinity has been intensively used,
due to its application in various  areas of physics, such as
superfluidity, nonlinear optics and Bose-Einstein condensation \cite{JPR1,JPR2,kivshar,coste}.
It seems then natural to analyze the equation \eqref{GP-full} for more
general interactions. Indeed, in the study of superfluidity, supersolids
and Bose-Einstein condensation,
 different types of nonlocal potentials have been proposed \cite{berloff0,deconinck,kraenkel,rica0,rica,aftalion,yi,cuevas,remi}.

To obtain a dimensionless equation, we take the average energy level per unit mass $\E_0$ of a boson, and we set
$$\psi(x,t)=\exp\left(\frac{i m \E_0 t}{\hbar}\right)\Psi(x,t).$$
Then \eqref{GP-full} turns into
\begin{equation}
i\hbar \partial_t\psi(x,t)=-\frac{\hbar^2}{2m}\Delta\psi(x,t)-m\E_0 \psi(x,t)+\psi(x,t) \int_{\R^N} \abs{\psi(y,t)}^2V(x-y)\,dy.
\label{GP-intro0}
\end{equation}
Defining the rescaling
$$u(x,t)=\frac{1}{\lambda \sqrt{m\E_0}}\left(\frac{\hbar}{\sqrt{2m^2\E_0}}\right)^{\frac N2}\psi\left(\frac{\hbar x}{\sqrt{2m^2\E_0}},\frac{\hbar t}{m\E_0}\right),$$
from \eqref{GP-intro0} we deduce that
\begin{equation*}
i\partial_t u(x,t)+\Delta u(x,t)+u(x,t)\left(1-\lambda^2\int_{\R^N}\abs{u(y,t)}^2 \mathcal V(x-y)\,dy\right)=0,
%\label{GP-lambda}
\end{equation*}
with $$\mathcal V(x)=V\left(\frac{\hbar x}{\sqrt{2m^2\E_0}}\right).$$
If we assume that the convolution
between $\mathcal V$ and a constant is well-defined  and equal to a positive constant,
choosing $\lambda^2=(\mathcal V*1)^{-1},$ equation \eqref{GP-intro0} is equivalent to
\begin{equation}\label{GP-intro}
i \ptl _t u+\Delta u+\lambda^2 u(\mathcal V*(1-\abs{u} ^2))=0 \textrm{ on } \R^N\times \R.\\
\end{equation}
More generally, we consider the Cauchy problem for the nonlocal Gross-Pitaevskii equation
with non-zero initial condition at infinity in the form
\begin{equation} \label{NGP}\tag{NGP}
\left\lbrace\begin{aligned}
i \ptl _t u+\Delta u+u(W*(1-\abs{u} ^2))&=0 \textrm{ on } \R^N\times \R,\\
u(0)&=u_0,
\end{aligned}\right.
\end{equation}
where
\begin{equation}\label{BC-1}
 \abs{u_0(x)} \to 1, \quad\textrm{as} \quad \abs{x}\to\infty.
\end{equation}

If $W$ is a real-valued even distribution, \eqref{NGP} is
a Hamiltonian equation whose energy given by
\begin{equation*}%\label{energy-Psi}
E(u(t))=\frac12 \int_{\R^N}\abs{\nabla u(t)}^2\,dx +\frac 14 \int_{\R^N}(W*(1-\abs{u(t)}^2))(1-\abs{u(t)}^2)\,dx
\end{equation*}
%E(u(t))=\frac12 \int_{\R^N}\abs{\nabla u(x,t)}^2 +\frac 14 \int_{\R^N}(W*(1-\abs{u(\cdot,t)}^2))(x)(1-\abs{u(x,t)}^2) \end{equation*}
is formally  conserved.

In the case that $W$ is the Dirac delta function, \eqref{NGP} corresponds to
the local Gross-Pitaevskii equation
and the Cauchy problem in this instance has been studied by B\'ethuel and Saut \cite{bethuel2},
G\'erard  \cite{gerard}, Gallo  \cite{gallo},
 among others.
As mentioned before, in a more
general framework
the interaction kernel $W$ could be nonlocal. For example,  Shchesnovich and Kraenkel in \cite{kraenkel} consider
for $\ve>0$,
\begin{equation}W_\ve({x})=\begin{cases}
\dfrac{1}{2\pi \ve^2 \abs{x}}K_0\left(\dfrac{\abs{x}}{\ve}\right), \quad  N=2,\\
\dfrac{1}{4\pi \ve^2 \abs{x}}\exp{\left(-\dfrac{\abs{x}}{\ve}\right)}, \quad  N=3,
\end{cases}
\label{ex:W}
\end{equation}
where $K_0$ is the modified Bessel function of second kind (also called Macdonald function).
In this way $W_\ve$ might be considered as an approximation of the Dirac delta function, since $W_\ve\to \delta$,
as $\ve\to 0,$ in a distributional sense.
Others interesting nonlocal interactions are
the soft core potential \begin{equation}
W(x)=\begin{cases}
1, & \ \text{ if }\abs{x}<a,\\
0, & \ \text{otherwise},
\end{cases}
\label{ex-rica}
\end{equation}
with $a>0$, which is used in \cite{rica,aftalion} to the study of supersolids{,} and also
\bq
W=\alpha_1\delta+\alpha_2 K,\qquad \alpha_1,\alpha_2\in \R, \label{dipolar2}
\eq
 where $K$ is the singular kernel
\bq \label{dipolar} K(x)=\dfrac{x_1^2+x_2^2-2x_3^2}{\abs{x}^5}, \quad x\in\R^3\backslash\{0\}.\eq
The potential \eqref{dipolar2}-\eqref{dipolar} models dipolar forces in a quantum gas (see \cite{remi}, \cite{yi}).

\subsection{Main results}
In order to include interactions such as \eqref{dipolar2}-\eqref{dipolar}, it is appropriate to work in the
space $\M_{p,q}(\R^N)$, that is the set of tempered distributions $W$ such that the linear operator
$f\mapsto W*f$ is bounded from $L^p(\R^N)$ to  $L^q(\R^N)$. We denote by $\norm{W}_{p,q}$ its norm. We will suppose that
there exist
$$p_1,p_2,p_3,p_4,q_1,q_2,q_3,q_4,s_1,s_2\in [1,\infty),$$
with
$$ \frac{N}{N-2}>p_4, \ \  \frac{2N}{N-2}>p_2,p_3,s_1,s_2\geq 2,\ \
2\geq q_1 >\frac{2N}{N+2},\ \ q_3,q_4>\frac{N}{2}  \quad \text{ if } N\geq 3$$
 and
$$p_2,p_3,s_1,s_2\geq 2,\ \ 2\geq q_1>1 \quad \text{ if } 2\geq N\geq 1,$$
such that
\begin{ecut}{$\mathcal W_N$} \label{Wn}
&W\in  \M_{2,2}(\R^N)\cap \displaystyle\bigcap\limits_{i=1}^4 \M_{p_i,q_i}(\R^N),\\
&\dfrac{1}{p_3}+\dfrac{1}{q_2}=\dfrac{1}{q_1},\quad \dfrac{1}{p_1}-\dfrac{1}{p_3}=\dfrac{1}{s_1}, \quad \dfrac{1}{q_1}-\dfrac{1}{q_3}=\dfrac{1}{s_2}
 \quad \text{ if } N\geq 3. 
\end{ecut}
% \begin{graybox}\noindent
We recall that if $p>q$, then $\M_{p,q}=\{0\}$. Therefore {if we suppose that} $W$ is not zero,
 the numbers above have to satisfy $q_2, q_3\geq 2$. In addition, the existence of $s_1$, $s_2$ and the relations in \eqref{Wn} imply that  
$$\frac{N}{N-2}>p_1, \ \ q_2>\frac{N}{2}, \ \ \frac1{p_1}-\frac{1}{p_3}\in\left(\frac{N-2}{2N},\frac12 \right], 
\ \ 
\frac1{q_1}-\frac{1}{q_3}\in\left(\frac{N-2}{2N},\frac12 \right]
\quad \text{ if } N\geq 3.$$
Figure~\ref{fig} schematically shows the location of these numbers in the unit square.
\begin{figure}[ht!]
\begin{center}
\includegraphics{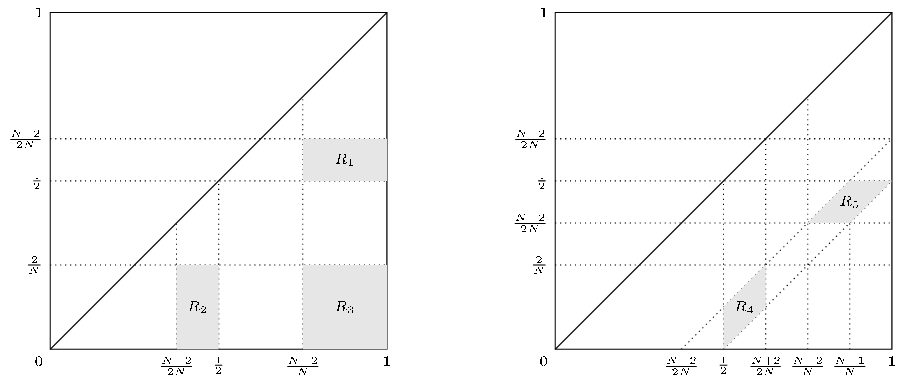} 
\end{center}
 \caption{\em{For $N>4$, the picture on the left represents the $(1/p,1/q)$-plane, in the sense that
  $(1/p_1,1/q_1)\in R_1$, $(1/p_2,1/q_2),(1/p_3,1/q_3)\in R_2$, $(1/p_4,1/q_4)\in R_3$.
 In the picture on the right, the shaded areas symbolize that $(1/q_1,1/q_3)\in R_4$ and $(1/p_1,1/p_3)\in R_5$, for
 $N>6$.}}
 \label{fig}

\end{figure}

%  \end{graybox}

To check the hypothesis \eqref{Wn} it is convenient to use some properties of the spaces
$\M_{p,q}(\R^N)$. For instance,  for any $1<p\leq q<\infty$, $\M_{p,q}(\R^N)=\M_{q',p'}(\R^N)$
and  for any $1\leq p\leq 2$, $\M_{1,1}(\R^N)\subseteq \M_{p,p}(\R^N) \subseteq \M_{2,2}(\R^N)$ (\cite{grafakos}).
In Proposition~\ref{regularidad} we give more explicit conditions to ensure \eqref{Wn}.

As remarked before, the energy is formally conserved
if $W$ is a real-valued even distribution. We recall
that a real-valued distribution is said to be even if
\bqq%\label{def-even}
\langle W,\phi\rangle=\langle W,\widetilde \phi\rangle, \quad \forall  \phi \in C_0^\infty(\R^N;\R),
\eqq
where $\widetilde \phi(x)=\phi(-x)$.
However, the conservation of energy is not sufficient  to study the long time behavior of the Cauchy problem,
 because  the potential energy is not
necessarily nonnegative and the nonlocal nature of the problem
prevents us to obtain pointwise bounds. We are able to control this term
assuming further that $W$ is a {\em positive distribution} or supposing that it is a
{\em positive definite distribution}. More precisely, we say that $W$ is a positive
distribution if
$$\langle  W,\phi\rangle\geq 0, \quad  \forall \phi \geq 0,\ \phi \in C_0^\infty(\R^N;\R),$$
and that it is a positive definite distribution if
\bq\label{def-pos} \langle W,\phi*\widetilde{\phi} \rangle \geq 0, \quad \phi \in C_0^\infty(\R^N;\R).\eq
These type of distributions frequently arise in the physical models (see Subsection~\ref{examples}).
In particular, the real-valued even positive definite distributions include
a large variety of models where the interaction between particles is symmetric.
In Section~\ref{section-pos} we state further properties
of this kind of potentials.

As Gallo in \cite{gallo}, we consider the initial data $u_0$ for the problem
\eqref{NGP} belonging to the space
${\phi}+H^1(\R^N)$, with $\phi$ a function of finite energy. More precisely,
from now on we assume that $\phi$ is a complex-valued function that satisfies
\begin{equation}
\phi\in W^{1,\infty}(\R^N), \ \grad \phi\in H^2(\R^N)\cap C(B^c), \ \abs{\phi}^2-1\in L^2(\R^N),
\label{phi}
\end{equation}
{where $B^c$ denotes the complement of} some ball $B\subseteq\R^N$, so that in particular $\phi$
satisfies \eqref{BC-1}.
\begin{remark}\label{rem-1}
We do not suppose that $\phi$ has a limit at infinity. In dimensions $N=1,2$
a function satisfying \eqref{phi} could have complicated oscillations, such as
(see \cite{gerard,gerard3})
\bqq%\label{ex-phi}
\phi(x)=\exp(i(\ln(2+\abs{x}))^\frac14), \quad x\in \R^2.\eqq

We note that any function verifying \eqref{phi} belongs to the Homogeneous Sobolev space
\bqq
 \H^1(\R^N)=\{\psi\in L^2_{\loc}(\R^N) : \grad \psi \in L^2(\R^N)\}.
\eqq
In particular, if $N\geq 3$ there exists $z_0\in \C$ with $\abs{z_0}=1$
such that $\phi-z_0\in L^{\frac{2N}{N-2}}(\R^N)$ (see e.g. Theorem 4.5.9 in \cite{hormander}).
Choosing  $\alpha \in \R$
such that $z_0=e^{i\alpha}$ and since the equation \eqref{NGP} is invariant by a  phase change, one can assume that
$\phi-1\in L^{\frac{2N}{N-2}}(\R^N)$, but we do not use explicitly  this decay in order to handle
at the same time the two-dimensional case.
\end{remark}

Our main result concerning the global well-posedness for the Cauchy problem is the following.
\begin{teo}\label{global}
Let $W$ be a real-valued even distribution satisfying \eqref{Wn}.
\begin{itemize}
\item[$(i)$] Assume that one of the following is verified
\begin{itemize}
\item[$(a)$]  $N\geq 2$\textrm{ and }$W$\textrm{ is a positive definite distribution}.  %\label{W-2}\\
\item[$(b)$] $N\geq 1$, $W\in\M_{1,1}(\R^N)$\textrm{ and }$W$\textrm{ is a positive distribution}.
\end{itemize}
Then  the Cauchy problem \eqref{NGP}
is globally well-posed in $\phi+H^1(\R^N)$. More precisely,
for every $w_0\in H^1(\R^N)$  there
exists a unique  $w \in C(\R,H^1(\R^N))$, for which $\phi+w$
solves \eqref{NGP} with the initial condition $u_0=\phi+w_0$
and for any bounded {closed} interval $I\subset\R$, the flow map $w_0\in H^1(\R^N) \mapsto w \in C(I,H^1(\R^N))$
is continuous.
Furthermore, $w \in C^1(\R,H^{-1}(\R^N))$ and the energy is conserved
\bq\label{E-cte}
E_0:=E(\phi+w_0)=E(\phi+w(t)), \ \forall t\in \R.
\eq
\item[$(ii)$] Assume that there exists $\sigma>0$ such that
\bq\label{hat-W} \essinf \widehat{W}\geq\sigma.\eq
Then  \eqref{NGP} is globally well-posed in $\phi+H^1(\R^N)$, for all $N\geq 1$ and \eqref{E-cte} holds. Moreover,
if $u$ is the solution associated to the initial data $u_0\in \phi+H^1(\R^N)$, we have
the growth estimate
\bq\label{linear} \LL{u(t)-\phi}{2}\leq C \abs{t}+\LL{u_0-\phi}{2},\eq
for any $t\in \R$, where $C$ is a positive constant that depends only on $E_0$,
$W,$ $\phi$ and $\sigma$.
\end{itemize}

\end{teo}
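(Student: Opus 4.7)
The strategy is to first reduce \eqref{NGP} to a Cauchy problem in $H^{1}(\R^{N})$ for the difference $w=u-\phi$, establish local well-posedness by a contraction argument, and then extend the solution for all times by a priori bounds coming from the conserved energy and the sign assumptions on $W$.

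\emph{Local theory and energy conservation.} With $u=\phi+w$, \eqref{NGP} becomes
\begin{equation*}
i\,\partial_{t}w+\Delta w=-\Delta\phi-(\phi+w)\bigl(W*(1-|\phi+w|^{2})\bigr),\qquad w(0)=w_{0}\in H^{1}(\R^{N}),
\end{equation*}
which I would solve as a fixed point of the associated Duhamel operator for the free Schr\"odinger semigroup. The hypothesis \eqref{Wn} is tailored precisely so that, combined with Sobolev embeddings and the regularity \eqref{phi} of $\phi$, the $\M_{p_{i},q_{i}}$-multiplier bounds make the nonlinear map locally Lipschitz on balls of $H^{1}$. A Banach fixed point then produces a unique maximal solution $w\in C([0,T_{\max}),H^{1})$ with $\partial_{t}w\in C(H^{-1})$, continuous dependence on $w_{0}$, and the blow-up alternative $T_{\max}=\infty$ or $\|w(t)\|_{H^{1}}\to\infty$. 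Energy conservation is obtained by first verifying $\tfrac{d}{dt}E(u(t))=0$ for smooth data (using that $W$ is real and even), and then passing to $w_{0}\in H^{1}$ by approximation, controlling the potential term via the $\M_{p_{i},q_{i}}$ bounds and the continuity of the flow.

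\emph{Global extension in case (i).} Write $\eta=1-|u|^{2}$. Positive-definiteness (a) together with Plancherel gives $\int(W*\eta)\eta\,dx=(2\pi)^{-N}\int\widehat W\,|\widehat\eta|^{2}\,d\xi\ge 0$; under hypothesis (b) the inclusion $W\in\M_{1,1}$ makes $W$ a positive finite measure, and expanding the potential term as the non-negative quantity $\int(W*|u|^{2})|u|^{2}$ plus lower-order pieces involving $\phi$ yields an analogous coercivity. In either situation, conservation of $E$ forces $\|\nabla u(t)\|_{L^{2}}^{2}\le C(E_{0})$, and hence a uniform bound on $\|\nabla w(t)\|_{L^{2}}$. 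To preclude $L^{2}$-blow-up I would derive from the equation the identity
\begin{equation*}
\frac{d}{dt}\|w\|_{L^{2}}^{2}=-2\,\Im\!\int \bar w\,\Delta\phi\,dx-2\,\Im\!\int \bar w\,\phi\,(W*\eta)\,dx,
\end{equation*}
decompose $\eta=(1-|\phi|^{2})-2\Re(\bar\phi w)-|w|^{2}$, and estimate each piece using \eqref{phi}, \eqref{Wn} and the already-proven $\|\nabla w\|_{L^{2}}$ bound. This produces a Gronwall-type inequality $\tfrac{d}{dt}\|w\|_{L^{2}}^{2}\le C(1+\|w\|_{L^{2}}^{2})$ that prevents $T_{\max}<\infty$. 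This $L^{2}$-control is the main technical difficulty in case (i), because the sign hypotheses alone yield no pointwise bound on $W*\eta$; one must handle the nonlocal cubic term $W*|w|^{2}$ carefully, balancing the various $\M_{p_{i},q_{i}}$ exponents of \eqref{Wn} against Sobolev embeddings.

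\emph{Case (ii) and the linear growth.} Under \eqref{hat-W}, Plancherel now yields $\int(W*\eta)\eta\,dx\ge\sigma\|\eta\|_{L^{2}}^{2}$, so conservation of $E$ immediately gives $\|\eta(t)\|_{L^{2}}^{2}\le 4E_{0}/\sigma$ and, via the $\M_{2,2}$-bound, $\|W*\eta(t)\|_{L^{2}}\le\|W\|_{2,2}\sqrt{4E_{0}/\sigma}$. Plugging this into the identity of the previous step yields $\tfrac{d}{dt}\|w\|_{L^{2}}\le C(E_{0},W,\phi,\sigma)$, which integrates to \eqref{linear} and simultaneously rules out finite-time blow-up, thereby completing the global well-posedness in this case.
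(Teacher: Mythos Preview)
Your treatment of the local theory, of case (i)(a), and of case (ii) follows the paper's argument closely and is correct. The genuine gap is in case (i)(b).

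You claim that when $W\in\M_{1,1}$ is a positive measure, ``expanding the potential term as the non-negative quantity $\int(W*|u|^{2})|u|^{2}$ plus lower-order pieces involving $\phi$ yields an analogous coercivity,'' and hence a uniform bound $\|\nabla u(t)\|_{L^{2}}^{2}\le C(E_{0})$. This step fails. First, $|u|^{2}$ does not lie in any $L^{p}$ (it tends to $1$ at infinity), so the quantity $\int(W*|u|^{2})|u|^{2}$ is not even defined. Second, even if one instead isolates the genuinely nonnegative piece $\int(W*|w|^{2})|w|^{2}$, the cross term $\int(W*|w|^{2})(\phi\cdot w)$ is \emph{not} lower order: it can be as large as $\|w\|_{L^{4}}^{2}\|w\|_{L^{2}}$, which involves $\|\nabla w\|_{L^{2}}$ through Gagliardo--Nirenberg and therefore cannot be absorbed to produce a bound on $\|\nabla w\|_{L^{2}}$ depending only on $E_{0}$. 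The paper explicitly notes that in case (b) one \emph{cannot} infer a uniform gradient bound from energy conservation alone.

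The paper's substitute is a threshold decomposition: with $A=4\|\phi\|_{L^{\infty}}+1$ one splits $w=w_{A}+w_{A^{c}}$ according to $\{|w|\le A\}$ and $\{|w|>A\}$. The point is that on the large-amplitude set the quartic piece dominates the cubic one pointwise, so that
\[
I_{3}:=\int(W*|w|^{2})\bigl(4\phi\cdot w_{A^{c}}+|w_{A^{c}}|^{2}\bigr)\,dx\ \ge\ \int|(W*|w|^{2})\,w_{A^{c}}|\,dx\ \ge\ 0,
\]
using only that $W$ is a positive measure. Energy conservation then yields the \emph{coupled} bound $\|\nabla w(t)\|_{L^{2}}^{2}+I_{3}(t)\le C(1+\|w(t)\|_{L^{2}}^{2})$, and the same $I_{3}$ controls the dangerous contribution $J_{3}=\int|(W*|w|^{2})w_{A^{c}}|$ in the $\tfrac{d}{dt}\|w\|_{L^{2}}^{2}$ identity, since $J_{3}\le I_{3}$. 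This closes the Gronwall argument without any a priori gradient control. Your outline for (b) is missing this mechanism entirely.
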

We make now some remarks about Theorem~\ref{global}.
\begin{itemize}
\item The condition \eqref{Wn} implies that $W\in \M_{2,2}(\R^N)$, so that $\wh W\in L^\infty(\R^N)$ and therefore
the condition \eqref{hat-W} makes sense.
\item In contrast with \eqref{linear}, as we prove in Section~\ref{sec-global}, the growth estimate
for the solution given by Theorem~\ref{global}-(i) is only exponential
\bqq \LL{u(t)-\phi}{2}\leq C_1 e^{C_2\abs{t}}(1+\LL{u_0-\phi}{2}) ,\qquad t\in \R,\eqq
for some constants $C_1,C_2$ only depending on $E_0$,
$W$ and $\phi$.
\item Accordingly to Remark~\ref{rem-1} and the Sobolev embedding theorem, after
a  {phase change} independent of $t$, the solution $u$ of \eqref{NGP} given by Theorem~\ref{global}
 also satisfies that $u-1\in L^{\frac{2N}{N-2}}(\R^N)$ if $N\geq 3$.

\item
In dimensions $1\leq N\leq 3$
we can choose $(p_4,q_4)=(2,2)$ in \eqref{Wn}. Consequently, the condition that  $W\in \M_{p_4,q_4}(\R^N)$
is  nontrivial {only} when $N\geq 4$.
\end{itemize}

At first sight, it is not obvious to check the hypotheses on $W$.
The purpose of the next result is to give sufficient conditions to ensure \eqref{Wn}.
\begin{prop}~\label{regularidad}
\begin{itemize}
\item[$(i)$] Let $1\leq N\leq 3$. If $W\in \M_{2,2}(\R^N)\cap \M_{3,3}(\R^N)$,
then $W$ fulfils \eqref{Wn}. Furthermore, if $W$ verifies \eqref{Wn} with $p_i=q_i$,
$1\leq i\leq {3}$,
then $W\in \M_{2,2}(\R^N)\cap \M_{3,3}(\R^N)$.

\item[$(ii)$] Let $N\geq 4$. Assume that  $W\in \M_{r,r}(\R^N)$ for {every} $1<r<\infty$.
Also suppose that there exists $\bar r>\frac{N}4$ such that
$W\in \M_{p,q}(\R^N)$, for {every} $1-\frac1{\bar r}<\frac1p<1$
with $\frac{1}{q}=\frac{1}{p}+\frac{1}{\bar{r}}-1$. Then $W$ satisfies \eqref{Wn}.
\end{itemize}
\end{prop}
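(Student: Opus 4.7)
My plan is to prove both parts by exhibiting, for each case, explicit choices of the ten exponents $p_i,q_i,s_i$ that satisfy the algebraic relations and size restrictions of~\eqref{Wn}, and then verifying each membership $W\in\M_{p_i,q_i}$ using the standing hypotheses together with two tools stated in the excerpt: the duality identity $\M_{p,q}(\R^N)=\M_{q',p'}(\R^N)$, and Riesz--Thorin interpolation applied to the convolution operator $f\mapsto W*f$.

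For part~(i), in the forward direction I would take
\bqq
p_1=q_1=3/2,\qquad p_2=q_2=p_3=q_3=s_1=s_2=3,\qquad p_4=q_4=2,
\eqq
so that the three algebraic relations are verified by a direct computation and all size restrictions are met for $1\le N\le 3$ (for $N=3$: $3/2>6/5$, $3<6$, $3>3/2$ and $2<3$). Since $\M_{3/2,3/2}=\M_{3,3}$ by duality, the five memberships required by~\eqref{Wn} all reduce to $W\in\M_{2,2}\cap\M_{3,3}$, which is the hypothesis. For the converse, with $p_i=q_i$ the first relation becomes $1/p_1=1/p_2+1/p_3$, with $p_1\le 2$ and $p_2,p_3\ge 2$. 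If $\max(p_2,p_3)\ge 3$, Riesz--Thorin interpolation between $\M_{2,2}$ and the $\M_{p_j,p_j}$ with $p_j\ge 3$ directly yields $W\in\M_{3,3}$. Otherwise $p_2,p_3\in[2,3)$ gives $1/p_1>2/3$, hence $p_1<3/2$; dualising, $W\in\M_{p_1',p_1'}$ with $p_1'>3$, and interpolation with $\M_{2,2}$ again yields $W\in\M_{3,3}$.

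For part~(ii), the idea is to handle the ``diagonal'' pairs $(p_i,p_i)$ with the first hypothesis and the forced ``off-diagonal'' pairs with the second. The size conditions $p_4<N/(N-2)$ and $q_4>N/2$ cannot be met with $p_4=q_4$ as soon as $N\ge 4$, and similarly $(p_3,q_3)$ is forced off-diagonal once $N\ge 6$ because $(N/2,2N/(N-2))=\emptyset$. I would first fix $(p_4,q_4)$ via the second hypothesis $1/q_4=1/p_4+1/\bar r-1$; combining the target constraints with the range $1-1/\bar r<1/p_4<1$ yields
\bqq
\max\bigl(1-\tfrac{2}{N},\ 1-\tfrac{1}{\bar r}\bigr)\;<\;\tfrac{1}{p_4}\;<\;\min\bigl(1,\ \tfrac{2}{N}+1-\tfrac{1}{\bar r}\bigr),
\eqq
and this interval is nonempty precisely under the standing assumption $\bar r>N/4$. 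Next I would choose $p_1=q_1$ in $(2N/(N+2),2]$ by the first hypothesis, and finally determine $(p_2,q_2),(p_3,q_3)$ from the three algebraic relations, taking $(p_3,q_3)$ diagonal for $N=4,5$ and off-diagonal (again via the second hypothesis) for $N\ge 6$.

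The main obstacle is the combinatorial bookkeeping for part~(ii): the three linear relations couple the ten exponents tightly, and each admissible window shrinks as $N$ grows. I expect that separating the sub-cases $N=4,5$ from $N\ge 6$, and using the two degrees of freedom provided by $p_1\in(2N/(N+2),2]$ and by $\bar r\in(N/4,\infty)$, is just enough to fit every exponent into its prescribed range and simultaneously keep $s_1,s_2$ in $[2,2N/(N-2))$; the threshold $\bar r>N/4$ enters precisely as the condition that the window for $(p_4,q_4)$ displayed above is nonempty.
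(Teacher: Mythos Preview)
Your argument for part~(i) is correct in both directions, and your choice of exponents for the forward implication coincides with the paper's. For the converse the paper argues differently: it frames the problem as minimising $\sum_i(1/p_i-1/2)^2$ subject to $1/p_1=1/p_2+1/p_3$, $p_1\le 2$, $p_2,p_3\ge 2$, and observes that the minimum is attained at $(3/2,3,3)$. Your case split according to whether $\max(p_2,p_3)\ge 3$ is more direct and arguably cleaner.

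For part~(ii) your overall strategy matches the paper's, including the split $N\in\{4,5\}$ versus $N\ge 6$ and the observation that $\bar r>N/4$ is exactly what makes the window for $(p_4,q_4)$ nonempty. However, there is a real gap in the step ``take $(p_3,q_3)$ off-diagonal (again via the second hypothesis) for $N\ge 6$''. The second hypothesis only furnishes $W\in\M_{p,q}$ on the range $1-1/\bar r<1/p<1$, i.e.\ $p<\bar r'=\bar r/(\bar r-1)$; since \eqref{Wn} requires $p_3\ge 2$, this range is empty as soon as $\bar r\ge 2$, which is forced by $\bar r>N/4$ whenever $N\ge 8$ and can occur for any $N\ge 4$ since $\bar r$ is \emph{given} by the hypothesis, not a parameter you may tune (so it is not a second ``degree of freedom'' as you write). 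What the paper does---and what you need---is to interpolate between the diagonal $\M_{r,r}$ from the first hypothesis and the off-diagonal line $1/q=1/p-(1-1/\bar r)$ from the second, to obtain $W\in\M_{\alpha,\beta}$ for \emph{every} pair with $0\le 1/\alpha-1/\beta\le 1-1/\bar r$. With this whole strip in hand there is room to place $(p_3,q_3)$ with $p_3\in[2,2N/(N-2))$ and $q_3>N/2$; the paper then writes down explicit exponents (with a further sub-split $N\in\{6,7\}$ versus $N\ge 8$) and checks all the inequalities by hand.
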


We conclude from Proposition \ref{regularidad} that the Dirac delta function verifies \eqref{Wn} in dimensions $1\leq N\leq 3$. Since
$\wh\delta=1$, Theorem~\ref{global}-(ii)
recovers the results of global existence for the local Gross-Pitaevskii equation in \cite{bethuel2,gerard,gallo}
and the growth estimate proved in \cite{banica}. In addition, if the potential converges to the
Dirac delta function, the correspondent solutions converge to the solution of the local problem as a consequence
of the following result.

% \begin{graybox}
\begin{prop}\label{convergencia}
Assume that $1\leq N\leq 3$. Let $(W_n)_{n\in \N}$ be a sequence of real-valued distributions in
$\M_{2,2}(\R^N)\cap \M_{3,3}(\R^N)$ 
% satisfying some of the hypothesis of Theorem~\ref{global}
such that $u_n$ is {the} global solution of \eqref{NGP} given by Theorem~\ref{global}, 
with $W_n$ instead of $W,$ for some initial data in $\phi+ H^1(\R^N)$, and
\bq\label{conver} \lim_{n\to\infty}W_n={W_\infty}, \quad \textup{ in  } \ \M_{2,2}(\R^N)\cap \M_{3,3}(\R^N),\eq
with $\norm{W_\infty}_{\M_{2,2}\cap\M_{3,3}}>0$
\textup{(}$\norm{\cdot}_{\M_{2,2}\cap\M_{3,3}}:=\max\{\norm{\cdot}_{\M_{2,2}},\norm{\cdot}_{\M_{3,3}}\}$\textup{)}.
 Then $u_n\to u$ in $C(I,H^1(\R^N))$,
for any bounded closed interval $I\subset\R$,
where $u$ is the solution of \eqref{NGP} with $W={W_\infty}$
and the same initial data.
\end{prop}
% \end{graybox}

On the other hand, the Dirac delta function does not satisfy \eqref{Wn} if $N\geq 4$ and therefore
Theorem~\ref{global} cannot be applied. In fact, to our knowledge there is no proof for the global well-posedness
to the local Gross-Pitaevskii equation in dimension $N\geq 4$ with arbitrary initial condition.
For small initial data, Gustafson et al. \cite{nakanishi} proved global well-posedness in dimensions $N\geq 4$
as well as G\'erard \cite{gerard}  in the four-dimensional energy space.

As a consequence of Theorem \ref{global} and Proposition \ref{regularidad} we derive the next result for integrable kernels.
\begin{cor}\label{cor}
Let $W$ be a real-valued even function such {that} $W\in L^1(\R^N)$
if \mbox{$1\leq N\leq 3$} and $W\in L^1(\R^N)\cap L^{r}(\R^N)$, for some $r>\frac{N}{4}$,
if $N\geq 4$. Assume also that
$W$  is positive definite if  $N\geq 2$, or  that it is nonnegative.
Then  the Cauchy problem \eqref{NGP}
is globally well-posed in $\phi+H^1(\R^N)$.
\end{cor}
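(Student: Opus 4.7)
The plan is to verify that $W$ meets the hypotheses of Theorem~\ref{global}, with condition \eqref{Wn} supplied by Proposition~\ref{regularidad}, so that the corollary is immediate.

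The bulk of the work is checking the mapping property \eqref{Wn}, and this reduces to standard Young-type convolution estimates. Since $W \in L^1(\R^N)$, Young's inequality yields $\norm{W*f}_{L^p} \leq \norm{W}_{L^1}\norm{f}_{L^p}$ for every $1 \leq p \leq \infty$, so $W \in \M_{p,p}(\R^N)$ for all such $p$. For $1 \leq N \leq 3$ this already gives $W \in \M_{2,2}(\R^N) \cap \M_{3,3}(\R^N)$, and Proposition~\ref{regularidad}-(i) then delivers \eqref{Wn}. For $N \geq 4$ the same argument provides $W \in \M_{r,r}(\R^N)$ for every $1 < r < \infty$, and it only remains to furnish the mixed estimate. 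Taking $\bar{r} := r > N/4$ from the hypothesis and applying Young's inequality with $W \in L^{\bar r}(\R^N)$, we get $W*f \in L^q(\R^N)$ whenever $f \in L^p(\R^N)$ with $1/q = 1/p + 1/\bar r - 1$, valid precisely in the range $1 - 1/\bar r < 1/p < 1$ required by Proposition~\ref{regularidad}-(ii). Thus \eqref{Wn} is established in every dimension.

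Next I would dispatch the positivity hypothesis of Theorem~\ref{global}-(i) according to the two cases allowed by the corollary. If $W \geq 0$ pointwise, then for every nonnegative $\varphi \in C_0^\infty(\R^N;\R)$ one has $\langle W,\varphi\rangle = \int_{\R^N} W\varphi\,dx \geq 0$, so $W$ is a positive distribution in the sense used in the paper; moreover $W \in L^1(\R^N) \subset \M_{1,1}(\R^N)$ by Young's inequality, so we fall into case (b) of Theorem~\ref{global}-(i) for any $N \geq 1$. If instead $N \geq 2$ and $W$ is positive definite, we are directly in case (a). Either way, Theorem~\ref{global}-(i) gives the desired global well-posedness in $\phi + H^1(\R^N)$.

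There is no substantive obstacle here: the corollary is essentially a bookkeeping argument converting the pointwise hypotheses on $W$ into the operator-theoretic hypotheses required upstream. The only small points that deserve attention are that the real-valuedness and evenness of $W$ pass to $W$ as a tempered distribution (immediate), and that pointwise nonnegativity of the \emph{integrable function} $W$ really implies the distributional positivity used in Theorem~\ref{global}-(i)(b) (which is just the remark above applied to test functions).
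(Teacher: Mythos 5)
Your proposal is correct and follows essentially the same route as the paper: Young's inequality gives $W\in\M_{p,p}(\R^N)$ for all $1\leq p\leq\infty$ (and, for $N\geq 4$, the mixed estimates $W\in\M_{p,q}(\R^N)$ from $W\in L^{r}(\R^N)$), after which Proposition~\ref{regularidad} yields \eqref{Wn} and Theorem~\ref{global}-(i), cases (a) or (b), concludes. Your explicit check that pointwise nonnegativity gives distributional positivity and that $L^1\subset\M_{1,1}$ is just a spelled-out version of what the paper leaves implicit.
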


As Gallo remarks in \cite{gallo}, the well-posedness
in a space such as $\phi+H^1(\R^N)$ makes possible to handle
the problem with initial data in the energy space
$$\mathcal E(\R^N)=\{u\in H^1_{\textrm{loc}}(\R^N) : \grad u\in L^2(\R^N), \ 1-\abs{u}^2\in L^2(\R^N) \},$$
equipped with the distance
\bq\label{distance} {d}(u,v)=\norm{u-v}_{X^1+H^1}+\LL{\abs{u}^2-\abs{v}^2}{2}.
\eq
Here $X^1(\R^N)$ denotes the Zhidkov space
$$X^1(\R^N)=\{u\in L^\infty(\R^N) :  \grad u\in L^2(\R^N)\}. $$
% \begin{graybox}\noindent 
We recall that $u\in C(\R,\mathcal E(\R^N))$ is called a \emph{mild solution}
of \eqref{NGP} if it satisfies the Duhamel formula
$${u(t)=e^{it\Delta} u_0+i\int_0^t e^{i(t-s)\Delta}(u(s)(W*(1-\abs{u(s)}^2))\,ds, \quad t\in \R.}$$
{We note that by Lemma~\ref{lema4} the integral in the r.h.s is actually  finite}
(see  \cite{gerard,gerard3} for further results about the action of Schr\"odinger semigroup
on $\E(\R^N)$). % 
With the same arguments of \cite{gallo}, we may 
also handle the problem with initial data 
in the energy space. Moreover, in the case $1\leq N\leq 4$, 
{we prove that a solution in the energy space with initial condition} $u_0\in \E(\R^N)$, necessarily
belongs to $u_0+H^1(\R^N)$, which is a proper subset of $\E(\R^N)$. 
This also gives the uniqueness in the energy space for $1\leq N\leq 4$, as follows.
\begin{teo}\label{teo-gallo}
Let $W$ be as in Theorem~\ref{global}. Then for any $u_0\in \mathcal E(\R^N)$, there exists a unique
$w\in C(\R, H^1(\R^N))$ such that $u:=u_0+w$ solves \eqref{NGP}.
Furthermore, if $1\leq N\leq 4$ and $v\in C(\R,\mathcal E(\R^N))$ is a mild solution of \eqref{NGP}
with $v(0)=u_0$, then $v=u$.
\end{teo}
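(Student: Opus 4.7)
The plan is to reduce the first part of the statement to Theorem~\ref{global} via an auxiliary construction, and then to tackle uniqueness in the larger energy space by a direct energy estimate on the difference of two solutions, using the dimensional restriction $N\leq 4$ to control the nonlinearity.

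For the first assertion, given $u_0\in\mathcal{E}(\R^N)$, I would first exhibit a function $\phi$ satisfying the hypotheses \eqref{phi} such that $u_0-\phi\in H^1(\R^N)$. Following the approach of \cite{gallo}, such a $\phi$ can be built by convolving $u_0$ with a smooth mollifier on the complement of a large ball and gluing in a smooth bounded modification inside, so that the properties $\nabla u_0\in L^2(\R^N)$ and $\abs{u_0}^2-1\in L^2(\R^N)$ propagate to yield $\phi\in W^{1,\infty}(\R^N)$ with $\nabla\phi\in H^2(\R^N)\cap C(B^c)$ and $\abs{\phi}^2-1\in L^2(\R^N)$, while $u_0-\phi\in L^2(\R^N)$. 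Since $\nabla(u_0-\phi)\in L^2$ automatically, we get $w_0:=u_0-\phi\in H^1(\R^N)$. Applying Theorem~\ref{global} to the data $u_0=\phi+w_0$ produces a unique $w\in C(\R,H^1(\R^N))$ such that $u:=\phi+w$ solves \eqref{NGP}, and $\tilde w:=u-u_0=w-w_0\in C(\R,H^1(\R^N))$ is then the unique object required by the statement; independence from the choice of $\phi$ follows from the uniqueness clause of Theorem~\ref{global}.

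For the uniqueness in $\mathcal{E}(\R^N)$ when $1\leq N\leq 4$, let $v\in C(\R,\mathcal{E}(\R^N))$ solve \eqref{NGP} with $v(0)=u_0$ and set $z:=v-u$, so $z(0)=0$ and $\nabla z\in L^\infty_{\loc}(\R,L^2(\R^N))$. Subtracting the equations for $u$ and $v$ and using the identity
\bqq
F(u)-F(v)=u\bigl(W*(\abs{v}^2-\abs{u}^2)\bigr)-z\bigl(W*(1-\abs{v}^2)\bigr),
\eqq
together with $\abs{v}^2-\abs{u}^2=\Re(z(\bar u+\bar v))$, a formal $L^2$ energy estimate yields
\bqq
\frac{d}{dt}\LL{z}{2}^2=-2\int \bigl(W*\Re(z(\bar u+\bar v))\bigr)\,\Im(u\bar z)\,dx,
\eqq
where the contribution of $z(W*(1-\abs{v}^2))$ drops because $W$ is real-valued and even. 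Bounding the right-hand side by Cauchy--Schwarz, the mapping property $W\in\M_{2,2}(\R^N)$ and H\"older, one gets a control of the form $C\LL{z}{4}^2\LL{u}{4}\LL{u+v}{4}$; the Sobolev embedding $H^1(\R^N)\hookrightarrow L^4(\R^N)$, which is valid exactly for $1\leq N\leq 4$, together with the decomposition of $\mathcal{E}$-functions as bounded plus $L^{2N/(N-2)}$ (cf.\ Remark~\ref{rem-1}) gives the $L^4$ control of $u$ and $v$. Combining these ingredients yields a Gronwall-type inequality $\frac{d}{dt}\LL{z}{2}^2\leq C(t)(\LL{z}{2}^2+\LL{z}{2}\LL{\nabla z}{2})$ with $C(t)$ locally bounded, which forces $z\equiv 0$ since $z(0)=0$.

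The main obstacle I anticipate lies in rigorously justifying the above energy estimate: a priori, only $\nabla z\in L^2$ is known, not $z\in L^2$, so the quantity $\LL{z(t)}{2}^2$ whose time derivative we compute could well be infinite. To handle this I would truncate by $\chi_R z$ for a smooth cutoff $\chi_R$ supported on a ball of radius $R$, perform the energy computation on the truncated object (where all terms are finite), handle the commutators $[\Delta,\chi_R]z$ and the extra boundary-type contributions coming from $W*$, and pass to the limit $R\to\infty$ exploiting the Sobolev-$L^4$ bounds and the assumption $N\leq 4$; it is precisely at this step that a higher dimension breaks the argument because the quartic structure of the nonlinear term after convolution can no longer be absorbed by $H^1$ norms. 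The construction of $\phi$ in the first paragraph, though conceptually standard, is also a nontrivial technical point that I would check carefully against \cite{gallo}.
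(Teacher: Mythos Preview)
Your existence argument is fine and coincides with the paper's: decompose $u_0=\phi+\tilde w_0$ with $\tilde w_0\in H^1(\R^N)$ and $\phi$ satisfying \eqref{phi} (this is Lemma~\ref{lema1}), apply Theorem~\ref{global}, and translate by $\tilde w_0$.

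For uniqueness, however, your energy-estimate route has a genuine gap, and the paper proceeds quite differently. Two concrete problems:
\begin{itemize}
\item Your bound $C\LL{z}{4}^2\LL{u}{4}\LL{u+v}{4}$ is ill-posed as written: $u,v\in\mathcal E(\R^N)$ satisfy $\abs{u},\abs{v}\to 1$ at infinity, so neither lies in $L^4$. Using $L^\infty+L^{4}$ instead introduces terms like $\LL{\phi}{\infty}\LL{z}{2}$ into each factor, but this does not cure the next point.
\item Even if one grants the inequality you write, $\frac{d}{dt}\LL{z}{2}^2\leq C(t)\bigl(\LL{z}{2}^2+\LL{z}{2}\LL{\grad z}{2}\bigr)$ with $\LL{\grad z}{2}\leq K$ bounded, this does \emph{not} force $z\equiv 0$ from $z(0)=0$. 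Setting $y=\LL{z}{2}^2$, the differential inequality $y'\leq C(y+K\sqrt{y})$ has the non-Lipschitz term $\sqrt{y}$ at the origin; $y(t)=(Kt/2)^2$ is a nontrivial solution of $y'=K\sqrt{y}$, $y(0)=0$. The same failure occurs if you keep the $\LL{z}{4}^2$ term: Gagliardo--Nirenberg gives $\LL{z}{4}^2\leq C\LL{z}{2}^{2-N/2}\LL{\grad z}{2}^{N/2}$, so for $N=4$ the right-hand side is a constant (no $\LL{z}{2}$ at all), and for $N=2,3$ the exponent on $\LL{z}{2}$ is strictly below $1$. In none of these cases does Gronwall yield $z=0$.
\end{itemize}

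The paper's strategy avoids this circularity entirely. It does not perform an energy estimate on $z$; instead it shows $u-v\in C([0,T],L^2(\R^N))$ \emph{directly} from the Duhamel formula
\[
u(t)-v(t)=i\int_0^t e^{i(t-s)\Delta}\bigl(G(u(s))-G(v(s))\bigr)\,ds,
\]
by bounding $G(u)-G(v)=u\,(W*(\abs{v}^2-\abs{u}^2))+(u-v)(W*(1-\abs{v}^2))$ in the sum space $L^2+L^{q'}$ (with $q=6$ for $N\leq 3$, $q=4$ for $N=4$) in terms of the $\mathcal E$-distance $d(u,v)$, via Lemma~\ref{lema3} and the continuity of $v\mapsto 1-\abs{v}^2$ on $\mathcal E$. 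Strichartz then gives $u-v\in C([0,T],L^2)$. Since $\grad u,\grad v\in C(\R,L^2)$ already, this yields $u-v\in C(\R,H^1)$, hence $v-u_0\in C(\R,H^1)$, and the $H^1$-uniqueness of Theorem~\ref{global} finishes the argument. The key point is that the nonlinear difference is controlled by quantities that are finite \emph{by assumption} (the $\mathcal E$-norms of $u$ and $v$), so no bootstrap or Gronwall on $\LL{z}{2}$ is needed.
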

% \end{graybox}

The next proposition shows that the hypotheses made on the potential $W$ also ensure the
 $H^2$-regularity of the solutions.
\begin{prop}\label{prop-H2}
Let $W$ be as in Theorem~\ref{global} and
 $u$ be the global solution of \eqref{NGP} for some
initial data $u_0\in \phi+H^2(\R^N)$. Then $u-\phi\in C(\R,H^2(\R^N))\cap C^1(\R,L^2(\R^N))$.
\end{prop}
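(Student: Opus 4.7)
My plan is to upgrade the $H^1$-solution of Theorem~\ref{global} to $H^2$ through an a priori bound on $\partial_t u$ in $L^2$, rather than through a direct $H^2$ fixed-point argument (which is obstructed by the fact that the source term $\Delta\phi$ in the equation for $w:=u-\phi$ belongs to $H^1$ but not in general to $H^2$). Theorem~\ref{global} already produces $w\in C(\R,H^1(\R^N))$ with $\norm{w(t)}_{H^1}$ bounded uniformly in $t\in\R$. The key observation is that once one controls $z:=\partial_t u$ in $L^2$, rearranging \eqref{NGP} as
\[
\Delta u = -iz - u\bigl(W*(1-\abs{u}^2)\bigr)
\]
gives $\Delta u\in L^\infty_{\loc}(\R,L^2)$: the nonlinear term is in $L^2$ by \eqref{Wn} combined with the uniform $H^1$-control of $w$. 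Since $\Delta\phi\in L^2$, this yields $\Delta w\in L^\infty_{\loc}(\R,L^2)$ and therefore $w\in L^\infty_{\loc}(\R,H^2)$.

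To produce the $L^2$-bound on $z$, I would differentiate \eqref{NGP} formally in $t$, which gives the linear Schr\"odinger equation
\[
iz_t+\Delta z+z\bigl(W*(1-\abs{u}^2)\bigr)-2u\bigl(W*\Re(\bar u z)\bigr)=0.
\]
Multiplying by $\bar z$, integrating and taking imaginary parts, the contributions from $\Delta z$ and from $z(W*(1-\abs{u}^2))$ vanish (the latter because $W$ real-valued and $1-\abs{u}^2$ real make $W*(1-\abs{u}^2)$ real-valued), and only the cross term survives. Using the exponent structure in \eqref{Wn}, H\"older's inequality and Sobolev embeddings, this cross term is bounded by a constant depending only on $\norm{w}_{H^1}$, $W$ and $\phi$, times $\norm{z}_{L^2}^2$, yielding $\frac{d}{dt}\norm{z(t)}_{L^2}^2\leq C\norm{z(t)}_{L^2}^2$. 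Gronwall then delivers $\norm{z(t)}_{L^2}\leq e^{C\abs{t}}\norm{z(0)}_{L^2}$, and the initial value $z(0)=i\Delta u_0+iu_0\bigl(W*(1-\abs{u_0}^2)\bigr)$ lies in $L^2$ because $u_0-\phi\in H^2$ gives $\Delta u_0\in L^2$ and the nonlinearity at $t=0$ is $L^2$-controlled by \eqref{Wn}.

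The main technical obstacle is that this time-differentiation is purely formal, since Theorem~\ref{global} only provides $\partial_t w\in C(\R,H^{-1})$. To make the argument rigorous, I would use a Bona--Smith type approximation: regularize the initial datum to $w_0^\ve\in H^\infty$ (e.g.\ via Fourier truncation), invoke Theorem~\ref{global} to obtain solutions $w^\ve$ smooth enough in $t$ for the computation above to be legitimate, and apply the uniform Gronwall estimate to $z^\ve=\partial_t u^\ve$. Continuity of the $H^1$-flow map yields $w^\ve\to w$ in $C([-T,T],H^1)$, while the uniform bound produces $\{w^\ve\}$ bounded in $L^\infty([-T,T],H^2)\cap W^{1,\infty}([-T,T],L^2)$; weak-$*$ compactness transfers the bounds to $w$. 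Strong continuity $w\in C(\R,H^2)\cap C^1(\R,L^2)$ is then obtained by applying the same energy estimate to differences $w^\ve-w^{\ve'}$ and showing that $\{w^\ve\}$ is Cauchy in $C([-T,T],H^2)$. The central technical point throughout is the careful bookkeeping required so that all constants depend only on the globally controlled $H^1$-norm of $w$, which is precisely where the full strength of the exponent relations in \eqref{Wn} is used.
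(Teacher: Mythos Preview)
Your overall strategy---control $z=\partial_t u$ in $L^2$ and then read off $\Delta w\in L^2$ from the equation---is indeed the idea behind the abstract result the paper invokes. The gap is in the specific mechanism you use to bound $\norm{z}_{L^2}$.

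You assert that after multiplying the linearized equation by $\bar z$ and taking imaginary parts, the surviving cross term
\[
\Im\int_{\R^N} u\,\bigl(W*\Re(\bar u z)\bigr)\,\bar z\,dx
\]
is controlled by $C(\norm{w}_{H^1})\,\norm{z}_{L^2}^2$. This is not true under~\eqref{Wn}. Already for $W=\delta$ and $N=3$ the integrand reduces to $\Re(\bar u z)\,\Im(u\bar z)$, whose modulus is bounded below by a constant times $\abs{u}^2\abs{z}^2$ on a set of positive measure; bounding $\int\abs{w}^2\abs{z}^2\,dx$ by $C\norm{z}_{L^2}^2$ would force $w\in L^\infty$, which $H^1$ does not give for $N\geq 2$. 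The general hypothesis~\eqref{Wn} is tailored so that the $g_j$ are Lipschitz from $L^{r_j}$ into the \emph{Strichartz-dual} spaces $L^{\rho_j'}$ (Lemma~\ref{cotas-g}), not so that the linearization is $L^2$--$L^2$ bounded. Your Bona--Smith layer does not repair this: the Gronwall constant for $z^\ve$ would depend on $\norm{w^\ve}_{H^2}$, which is precisely the quantity you are trying to bound.

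The paper proceeds differently. It quotes Lemma~5.3.1 of Cazenave, whose hypotheses are exactly the Lipschitz estimates~\eqref{lip} already established in Lemma~\ref{cotas-g}, together with one additional bound:
\[
\norm{g_j(w)}_{L^2}\leq C(M,W,\phi)\bigl(1+\norm{w}_{H^s}\bigr)\qquad\text{for some }0<s<2,\ \text{whenever }\norm{w}_{H^1}\leq M.
\]
The entire content of the paper's proof is the verification of this last inequality (trivial for $g_1,g_2$; for $g_3,g_4$ it uses Lemma~\ref{last} and Sobolev embedding with $s$ close to $2$ when $N\geq 4$). Cazenave's argument then controls the time-difference quotient $(w(\cdot+h)-w)/h$ via Duhamel and Strichartz, using~\eqref{lip} in $L^{\rho_j'}$ rather than an $L^2$ energy identity; this is what circumvents the $L^\infty$ obstruction. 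If you wish to keep a self-contained proof, replace your multiplication-by-$\bar z$ step by a Strichartz estimate on the Duhamel formula for the linearized equation, feeding in the bounds of Lemma~\ref{cotas-g}; that closes.
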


Finally, we study the
conservation of  momentum and  mass for \eqref{NGP}.
As has been discussed in several works (see {\cite{bethuel0,bethuel-black,maris,bethuel3})
the classical concepts of momentum and mass, that is
\bqq%\label{mom}
p(u)=\int_{\R^N}{\llave{i \grad u,u}} \,dx \quad\text{ and } \quad M(u)=\int_{\R^N}(1-\abs{u}^2)\,dx,\eqq
with ${\llave{z_1,z_2}=\Re(z_1 \overline{z}_2)}$,
are not well-defined for $u\in \phi+H^1(\R^N)$.
Thus it is necessary to give some generalized sense to
 these quantities. In Section~\ref{conservation} we will explain in detail a notion
of {\em generalized momentum}  and {\em generalized mass} such that we have
the next results on conservation laws.

\begin{teo}\label{teo-momentum}
Let $N\geq 1$ and $u_0\in \phi +H^1(\R^N)$. Then the generalized momentum is conserved by the flow
of the associated solution $u$ of \eqref{NGP} given by Theorem~\ref{global}.
\end{teo}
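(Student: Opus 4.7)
The strategy is a density argument: verify the conservation law first for data with $H^2$-regularity, where the formal calculation may be carried out rigorously, and then extend to $H^1$-data using the continuity of the flow provided by Theorem~\ref{global} together with the continuity of the generalized momentum functional (as to be constructed in Section~\ref{conservation}) with respect to the $H^1$-topology on $w = u - \phi$.

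\textbf{Step 1 (regular data).} Assume first that $u_0 \in \phi + H^2(\R^N)$. By Proposition~\ref{prop-H2}, the associated solution satisfies $u-\phi\in C(\R,H^2(\R^N))\cap C^1(\R,L^2(\R^N))$, which is enough regularity to differentiate the generalized momentum under the integral and to substitute the equation $i\partial_t u = -\Delta u - u\,(W*(1-\abs{u}^2))$ in the resulting expression. The Laplacian contribution is handled by the standard integration by parts and produces a divergence term which vanishes thanks to the $H^2$ decay of $w = u - \phi$, all contributions from $\phi$ being cancelled by the very definition of the generalized momentum. The nonlocal contribution to the $j$-th component of $\frac{d}{dt}p(u(t))$ reduces, after an integration by parts on the $u$-factor, to an expression of the form
\begin{equation*}
I_j \;=\; \int_{\R^N}\partial_j(\abs{u}^2)\,\bigl(W*(1-\abs{u}^2)\bigr)\,dx.
\end{equation*}
Writing it as a double integral against $W(x-y)$, exchanging $x \leftrightarrow y$ and invoking the evenness of $W$, one obtains
\begin{equation*}
I_j \;=\; -\tfrac{1}{2}\bigl\langle W,\,\partial_j\bigl[(1-\abs{u}^2)\otimes(1-\abs{u}^2)\bigr]\bigr\rangle \;=\; 0,
\end{equation*}
since the argument is a total $x_j$-derivative of a Schwartz-like tensor against the tempered distribution $W$ (and $1-\abs{u}^2 \in L^2$ because $w \in H^1$, while $W \in \M_{2,2}$ so $W*(1-\abs{u}^2)\in L^2$). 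Thus $\frac{d}{dt}p(u(t)) = 0$ along the regular flow.

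\textbf{Step 2 (approximation).} For a general $u_0 = \phi + w_0 \in \phi + H^1(\R^N)$ we pick a sequence $w_0^n \in H^2(\R^N)$ with $w_0^n \to w_0$ in $H^1(\R^N)$ and denote by $u^n = \phi + w^n$ the corresponding solutions of \eqref{NGP}. By Theorem~\ref{global}, for any bounded interval $I\subset\R$, $w^n \to w$ in $C(I,H^1(\R^N))$. Since the generalized momentum is $H^1$-continuous in $w$ (a property one checks directly from its definition, which only couples $\grad w$ and $w$ with the bounded functions $\grad \phi$ and $\phi$), Step 1 yields $p(u^n(t)) = p(u_0^n)$ for all $n,t$, and passing to the limit gives $p(u(t)) = p(u_0)$ for every $t\in\R$.

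\textbf{Main difficulty.} The delicate point is the non-integrability of the ``raw'' momentum density $\langle i\grad u, u\rangle$ on $\phi + H^1$: one has to isolate, inside the formal expression $\int\langle i\grad u, u\rangle\,dx$, precisely those bilinear combinations of $w$, $\grad w$, $\phi$ and $\grad\phi$ that are integrable under hypothesis \eqref{phi}, and verify that the remaining ``$\phi$-only'' piece is time independent by construction. Once this bookkeeping is done, the cancellation of the Laplacian term and of $I_j$ proceeds as above; the rest is a routine density/continuity argument.
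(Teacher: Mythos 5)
Your proposal is correct and follows essentially the same route as the paper: reduction to $H^2$ data via Proposition~\ref{prop-H2} and the continuous dependence in Theorem~\ref{global} (together with the $H^1$-continuity visible in \eqref{mom-for2} and Lemma~\ref{mom-dim-1}), then the formal computation where the Laplacian term integrates to zero and the nonlocal term cancels by the evenness of $W$ combined with the fact that $(1-\abs{u}^2)(W*(1-\abs{u}^2))\in W^{1,1}(\R^N)$, which is exactly the symmetrization provided by Lemma~\ref{lema-int-finita}. Your ``total derivative paired with $W$'' step is stated a bit informally, but it is the same cancellation the paper justifies rigorously via \eqref{dar-W}--\eqref{dar-W2}.
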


\begin{teo}\label{teo-masa}
Let $1\leq N\leq 4$. In addition to \eqref{phi}, assume that
$\grad \phi\in L^\frac{N}{N-1}(\R^N)$ if $N=3,4$.
Suppose that $u_0\in \phi+H^1(\R^N)$
has finite generalized mass.
Then the generalized mass of the associated solution of \eqref{NGP} given by Theorem~\ref{global}
is conserved by the flow.
\end{teo}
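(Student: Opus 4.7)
The approach is to establish the pointwise local conservation law for $\abs{u}^2$, integrate it against a spatial cutoff, and show the resulting boundary flux tends to zero. A density reduction lets one work with $H^2$ data. Concretely, given $w_0\in H^1(\R^N)$ with finite generalized mass, approximate $w_0$ by a sequence $w_0^n\in H^2(\R^N)$ converging in $H^1$ and in the generalized-mass quantity. By Proposition~\ref{prop-H2} the corresponding $u^n=\phi+w^n$ satisfies $w^n\in C(\R,H^2)\cap C^1(\R,L^2)$, so pointwise manipulations are justified, and the continuous dependence statement in Theorem~\ref{global} transfers the conservation identity from $u^n$ back to $u$.

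For a regular solution, multiplying \eqref{NGP} by $\bar u$ and taking imaginary parts kills the nonlocal term (because $W\ast(1-\abs{u}^2)$ is real) and produces the pointwise local conservation law $\ptl_t\abs{u}^2=-2\div J(u)$, where $J(u):=\Im(\bar u\,\grad u)$. Let $\eta_R(x)=\eta(x/R)$ be a radial cutoff with $\eta\equiv 1$ on the unit ball and $\supp\eta\subset B_2$, so that $\norm{\grad\eta_R}_\infty=O(R^{-1})$ and $\norm{\Delta\eta_R}_\infty=O(R^{-2})$. Integrating gives
\begin{equation*}
\frac{d}{dt}\int\eta_R(\abs{u}^2-\abs{\phi}^2)\,dx = 2\int\grad\eta_R\cdot J(u)\,dx,
\end{equation*}
and the left-hand side converges, as $R\to\infty$, to $\frac{d}{dt}$ of the generalized mass.

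The right-hand side is analyzed after the decomposition obtained from $u=\phi+w$ and the identity $\bar\phi\grad w=\grad(\bar\phi w)-w\grad\bar\phi$:
\begin{equation*}
J(u)=J(\phi)+\grad\Im(\bar\phi w)+2\Im(\bar w\,\grad\phi)+\Im(\bar w\,\grad w).
\end{equation*}
The $J(\phi)$ piece is time-independent and is absorbed when comparing $M(u(t))$ with $M(u(s))$. The last two summands lie in $L^1(\R^N)^N$: $\Im(\bar w\,\grad w)$ trivially from $w,\grad w\in L^2$; $\Im(\bar w\,\grad\phi)$ via Hölder using the Sobolev embedding $H^1\hookrightarrow L^{2N/(N-2)}$ for $N\geq 3$ (or $H^1\hookrightarrow L^p$, $p<\infty$, for $N\leq 2$) together with the hypothesis $\grad\phi\in L^{N/(N-1)}$ in dimensions $N=3,4$. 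Their $\grad\eta_R$-flux therefore vanishes as $R\to\infty$ by concentration on outer annuli combined with the $O(R^{-1})$ weight. The gradient piece is reduced by a second integration by parts,
\begin{equation*}
\int\grad\eta_R\cdot\grad\Im(\bar\phi w)\,dx=-\int\Delta\eta_R\,\Im(\bar\phi w)\,dx,
\end{equation*}
whose decay follows from the $O(R^{-2})$ bound on $\Delta\eta_R$ together with the integrability of $\bar\phi w$ on annuli.

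The main obstacle is precisely this last gradient contribution: $\grad\Im(\bar\phi w)$ is only in $L^2(\R^N)^N$, so the naive flux bound $\norm{\grad\eta_R}_\infty R^{N-1}\norm{\grad w}_{L^2}\sim R^{N/2-1}$ does not close in dimensions $N\geq 3$. The remedy is the extra integration by parts combined with the fact that, in view of Remark~\ref{rem-1} and the added hypothesis $\grad\phi\in L^{N/(N-1)}$ for $N=3,4$ (which gives via Sobolev an improved integrability of $\phi-1$), the product $\bar\phi w$ has just enough decay on $\{R\leq\abs{x}\leq 2R\}$ to defeat the $O(R^N)$ volume against the $O(R^{-2})$ weight of $\Delta\eta_R$. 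Once this flux is shown to vanish, the identity $\frac{d}{dt}M(u(t))=0$ follows on smooth data, and the approximation step of the first paragraph yields the conclusion in full generality.
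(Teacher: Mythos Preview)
Your overall architecture matches the paper's: reduce to $H^2$ data via Proposition~\ref{prop-H2}, derive the local conservation law $\partial_t|u|^2=-2\div J(u)$, test against a cutoff, and show the annular flux vanishes. The decomposition of $J(u)$ and the treatment of $\Im(\bar w\,\grad w)$ and $\Im(\bar w\,\grad\phi)$ are fine (for the latter, note that $w,\grad\phi\in L^2$ already gives $L^1$ without the extra hypothesis).

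There is, however, a genuine gap in your handling of the $J(\phi)=\Im(\bar\phi\,\grad\phi)$ piece. You write that it is ``time-independent and is absorbed when comparing $M(u(t))$ with $M(u(s))$'', but this is not so: after integrating the identity in time from $s$ to $t$ the contribution of this term is
\[
-2(t-s)\int_{\R^N}\grad\eta_R\cdot J(\phi)\,dx,
\]
which carries the factor $(t-s)$ and does \emph{not} cancel against anything. One must show that $\int\grad\eta_R\cdot J(\phi)\to 0$ as $R\to\infty$, and this is exactly where the additional assumption $\grad\phi\in L^{N/(N-1)}$ (for $N=3,4$) enters: by H\"older,
\[
\Bigl|\int\grad\eta_R\cdot J(\phi)\Bigr|\le \|\phi\|_{L^\infty}\,\|\grad\eta_R\|_{L^N}\,\|\grad\phi\|_{L^{N/(N-1)}(\{R<|x|<2R\})},
\]
with $\|\grad\eta_R\|_{L^N}$ uniformly bounded in $R$ and the last factor tending to $0$. (For $N=1,2$ one uses $\grad\phi\in L^2$ instead, which is already in \eqref{phi}.) This is precisely the paper's estimate of the term $I_3$.

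Conversely, you invoke the hypothesis $\grad\phi\in L^{N/(N-1)}$ for the $\grad\Im(\bar\phi w)$ term, where it is unnecessary. After your second integration by parts,
\[
\Bigl|\int\Delta\eta_R\,\Im(\bar\phi w)\Bigr|\le \|\phi\|_{L^\infty}\,\|\Delta\eta_R\|_{L^2}\,\|w\|_{L^2(\{R<|x|<2R\})},
\]
and $\|\Delta\eta_R\|_{L^2}=O(R^{(N-4)/2})$ is bounded for $N\le 4$ while $\|w\|_{L^2(\{R<|x|<2R\})}\to 0$. No improved integrability of $\phi-1$ is required here. In short, the role you assign to the extra hypothesis is misplaced: it is needed for the time-independent flux $\int\grad\eta_R\cdot J(\phi)$, not for the $\bar\phi w$ term.
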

%%%%%%%%%%%%%%%%%%%%%%%%%%%%%%%%%%%%%%%%%%%%%%%%%%%%%%%%%%%%%%%%%%%%%%%%%%%%%%%%%%%%%%%%%%%%%%%%%%%%%%%%%%%%%%%%%%%%%%%%%%%%%%%%%%%%%
\subsection{Examples}\label{examples}
\begin{enumerate}
\item[(i)]
Given the spherically symmetric
interaction of bosons, it is usual to suppose that $W$ is radial, that is
$W(x-y)=R(\abs{x-y}),$
with $R:[0,\infty)\to \R$. Using the fact that the Fourier transform of
a radial function is also radial, we may write
$\widehat W(\xi)=\rho(\abs{\xi}),$
for some function $\rho:[0,\infty)\to \R$. Noticing that
$\wh \delta=1$, a next order of approximation would be to consider (see e.g.~\cite{kraenkel})
$$\rho(r)=\frac{1}{1+\ve^2 r^2}, \qquad \ve>0.$$
%This approximation is used for example in \cite{kraenkel}.
Then the Fourier inversion theorem implies that $W$ is given by \eqref{ex:W} for $N=2,3$.
By Proposition~\ref{lema-def}, $\eqref{ex:W}$ is indeed a positive definite function, since $\rho$ is nonnegative.
For this potential we also have that $K_0(x)\approx \ln\left(\frac2x\right)$ as $x\to 0$,
and $K_0(x)\approx \sqrt{\frac{\pi}{2x}}\exp(-x)$ as $x\to \infty$ (see e.g.~\cite{lebedev}, p.~136), hence  $W\in L^1(\R^N)$
for $N=2,3$. Therefore it is possible to invoke  Corollary~\ref{cor}.

\item[(ii)]
By Lemma~\ref{lema-cont}, the function given by \eqref{ex-rica} cannot be positive definite, since it is bounded and it does not coincide
with any continuous function a.e. However, $W$ is a nonnegative function
that belongs to $L^{1}(\R^N)\cap L^{\infty}(\R^N)$. Therefore Corollary~\ref{cor}
can be applied in any dimension.

\item[(iii)]
We recall that if $\Omega$ is  an even function, smooth away from the origin, homogeneous of
degree zero, with zero mean-value {on} the sphere
\bqq%\label{omega}
\int_{\mathbb S^{N-1}}\Omega(\sigma)\,d\sigma =0,
\eqq
{then}
\bqq %\label{omega0}
K(x)=\frac{\Omega(x)}{\abs{x}^N}, \quad x\in\R^N\backslash\{0\},
\eqq
 defines a tempered  distribution $\mathcal K$
in the sense of principal value, that coincides with $K$ away from the origin. Moreover, for any $f\in S(\R^N)$, $x\in \R^N$,
\bq\label{def-K}
(\mathcal K*f)(x)=\textrm{p.v.} \int_{\R^N}K(y)f(x-y)\,dy=\lim_{\ve\to 0} \int_{\frac1\ve>\abs{y}>\ve}\frac{\Omega(y)}{\abs{y}^N}f(x-y)\,dy,
\eq
 $\mathcal K \in \M_{p,p}(\R^N)$ for every $1<p<\infty$, and the Fourier transform of $\mathcal K$ belongs to $L^\infty(\R^N)$ (cf. \cite{stein}).
Therefore \bq\label{W-K} W=\alpha_1\delta+ \alpha_2\mathcal K\eq is a positive definite distribution
if $\alpha_1$ is large enough and then Theorem~\ref{global}-(ii) gives a global solution of \eqref{NGP}
in any dimension. For instance, we may consider in dimension three the function $K$ given by \eqref{dipolar}.
Since (see \cite{remi}) $$\wh{\mathcal K}(\xi)=\frac{4\pi}{3}\left(\frac{3\xi_3^2}{\abs{\xi}^2}-1\right), \qquad \xi\in\R^3\backslash\{0\},$$
\eqref{W-K} is positive definite by Proposition~\ref{lema-def} if
\bq\label{alpha}\alpha_1\geq \frac{4\pi}{3}\alpha_2\geq 0 \qquad\textrm{or}\qquad \alpha_1\geq -\frac{8\pi}{3}\alpha_2\geq 0.\eq
Therefore, if \eqref{alpha} is verified we may apply Theorem~\ref{global}-(i)-(a). Moreover,
if the inequalities in  \eqref{alpha} are strict, we have also the growth estimate of Theorem~\ref{global}-(ii).

\item[(iv)] Let us recall that to pass from the original equation \eqref{GP-full} to \eqref{GP-intro}
(and hence to \eqref{NGP}) we only need the constant $V*1$ be positive. If we take
$V$ as the potential given in the examples (i) or (ii), then $V\in L^1(\R^N)$ and
$$V*1=\int_{\R^N}V(x)\,dx>0.$$
Therefore Theorem~\ref{global} also provides the global well-posedness for the equation \eqref{GP-full}.
If we want to consider $V$ as in the example (iii),
the meaning of $\mathcal K*1$ is not obvious.
However, \eqref{def-K} still makes sense if $f\equiv 1$. In fact, using {\eqref{def-K}},
$$(\mathcal K*1)(x)=\lim_{\ve\to 0} %\atop L\to \infty}
\int_{\ve}^{\ve^{-1}}\int_{\mathbb S^2}\frac{\Omega(\sigma)}{r^3}r^2\,d\sigma \,dr=0.$$
Then if $V$ is given by \eqref{W-K}, $V*1=\alpha_1$  and we have the same conclusion as before,
provided that $\alpha_1>0$.
\end{enumerate}

One of the first works that introduces the nonlocal interaction in the Gross-Pitaevskii
equation was made by  Pomeau and Rica in \cite{rica0} considering
the potential \eqref{ex-rica}. Their main purpose was to establish a model
for superfluids with rotons. In fact, the Landau theory
of superfluidity of Helium II  says that
the dispersion curve must exhibit a roton minimum  (see \cite{landau,feyman})
as was  corroborated later by experimental observations (\cite{donnelly}).
Although the model considered in \cite{rica0} has a good  fit with the roton
minimum, it does not provide a correct sound speed. For this reason
 Berloff in \cite{berloff} proposes the potential
\bq\label{potential-berloff}
W(x)=(\alpha+\beta A^2\abs{x}^2+\gamma A^4\abs{x}^4)\exp(-A^2 \abs{x}^2), \quad x\in \R^3,
\eq
where the parameters $A$, $\alpha$, $\beta$ and $\gamma$  are chosen
such that the above requirements are satisfied.
However, the existence of this roton minimum implies that $\wh W$ must be
negative in some interval. In addition, a numerical
simulation in \cite{berloff} shows that in this case the solution exhibits
nonphysical mass concentration phenomenon, for certain initial conditions in $\phi+H^1(\R^3)$.
At some point, our results are in agreement with these observations in the sense that Theorem~\ref{global}
cannot be applied to the potential \eqref{potential-berloff}, because $\wh W$ and $W$ are
negative in some interval. However, by Proposition~\ref{regularidad} we may use the following local well-posedness result

\begin{teo}\label{teo-local}
Let $W$ be a distribution satisfying \eqref{Wn}.
Then  the Cauchy problem \eqref{NGP}
is locally well-posed in $\phi+H^1(\R^N)$. More precisely,
for every $w_0\in H^1(\R^N)$ there exists $T>0$ such that there
is a unique  $w \in C([-T, T],H^1(\R^N))$, for which $\phi+w$
solves \eqref{NGP} with the initial condition $u_0=\phi+w_0.$
%Moreover, $w \in C^1([-T, T],H^{-1}(\R^N))$.
In addition, $w$ is defined on a maximal time interval $(-T_{\min},T_{\max})$
where $w \in C^1((-T_{\min},T_{\max}),H^{-1}(\R^N))$ and
the blow-up alternative holds:  $\norm{w(t)}_{H^1(\R^N)}\to \infty$,
as $t\to T_{\max}$ if $T_{\max}<\infty$ and
$\norm{w(t)}_{H^1(\R^N)}\to \infty$,
as $t\to T_{\min}$ if $T_{\min}<\infty$.
Furthermore, supposing that $W$ is a real-valued even distribution,
for any {bounded} closed interval $I\subset (-T_{\min},T_{\max})$ the flow map $w_0\in H^1(\R^N) \mapsto w \in C(I,H^1(\R^N))$
is continuous and the energy and the generalized momentum are conserved on  $(-T_{\min},T_{\max})$.
\end{teo}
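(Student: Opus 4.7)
The plan is to set up a standard Duhamel fixed-point argument in the affine space $\phi+H^1(\R^N)$. Writing $u=\phi+w$, the Cauchy problem is equivalent to
\begin{equation*}
w(t)=e^{it\Delta}w_0+i\int_0^t e^{i(t-s)\Delta}\bigl(\Delta \phi+(\phi+w)(W*(1-\abs{\phi+w}^2))\bigr)(s)\,ds.
\end{equation*}
Since $\Delta\phi\in H^1(\R^N)$ by \eqref{phi}, this is a constant forcing term. Let $\Phi(w)$ denote the right-hand side. I would show that $\Phi$ is a contraction on a closed ball of $C([-T,T],H^1(\R^N))$, for $T=T(\LL{w_0}{H^1},\phi,W)$ small enough; the unitarity of $e^{it\Delta}$ on $H^1$ reduces everything to estimating the nonlinearity
\begin{equation*}
F(v):=v\bigl(W*(1-\abs{v}^2)\bigr)
\end{equation*}
as a map from (a ball in) $\phi+H^1(\R^N)$ into $H^1(\R^N)$, together with the difference $F(\phi+w_1)-F(\phi+w_2)$.

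The core of the argument is the nonlinear estimate, which is where the intricate hypothesis \eqref{Wn} is used. The natural expansion is
\begin{equation*}
1-\abs{\phi+w}^2=(1-\abs{\phi}^2)-2\Re(\bar{\phi}w)-\abs{w}^2,
\end{equation*}
and, by \eqref{phi} together with Sobolev embeddings and Hölder, each piece lies in a range of $L^{p_i}$ spaces. Applying the multiplier bounds $\norm{W*g}_{L^{q_i}}\le \norm{W}_{p_i,q_i}\LL{g}{p_i}$ and then Hölder with $\phi+w$ in the appropriate $L^{s}$ space (using $W^{1,\infty}$ for $\phi$ and $H^1\hookrightarrow L^{2N/(N-2)}$ for $w$) gives a bound on $\LL{F(v)}{2}$. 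For the derivative part,
\begin{equation*}
\grad F(v)=\grad v\,(W*(1-\abs{v}^2))-2v\,(W*\Re(\bar{v}\grad v)),
\end{equation*}
so we need $W*(1-\abs{v}^2)$ in an $L^{q_2}$ space paired with $\grad v\in L^{p_3}$ via the relation $1/p_3+1/q_2=1/q_1$, and $W*(\bar v\grad v)$ in an $L^{q_3}$ space paired with $v$; the remaining relations in \eqref{Wn} together with $W\in\M_{2,2}$ and $\M_{p_4,q_4}$ produce the $L^2$ output. The same scheme, applied to differences, yields a local Lipschitz estimate
\begin{equation*}
\norm{F(\phi+w_1)-F(\phi+w_2)}_{H^1}\leq C(R)\,\LL{w_1-w_2}{H^1}
\end{equation*}
on balls of radius $R$. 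The main obstacle I anticipate is bookkeeping all the exponent relations so that each term genuinely lands in $L^2$; once this is done, Banach's fixed-point theorem gives local existence and uniqueness, and the contraction estimate automatically yields continuous dependence on $w_0$ in $C(I,H^1)$ on any closed $I\subset(-T_{\min},T_{\max})$.

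From the equation itself, $\partial_t w=i(\Delta w+\Delta\phi+F(\phi+w))$, the right-hand side lies in $C(I,H^{-1}(\R^N))$ (since $H^1\subset H^{-1}$ and $\Delta:H^1\to H^{-1}$), giving $w\in C^1((-T_{\min},T_{\max}),H^{-1})$. The blow-up alternative follows by the usual argument: if $T_{\max}<\infty$ and $\liminf_{t\to T_{\max}}\LL{w(t)}{H^1}<\infty$, the uniform local existence time contradicts maximality. For the conservation laws when $W$ is real-valued and even, I would first regularize by approximating $w_0$ by data in $H^2(\R^N)$ and invoking Proposition~\ref{prop-H2}: for such smooth solutions, $\partial_t u\in C(\R,L^2)$ makes differentiating $E$ and the generalized momentum rigorous, and the parity of $W$ gives the algebraic identity $\langle W*f,g\rangle=\langle f,W*g\rangle$ that kills the time derivatives (the potential part of $E$ contributes $\int (W*\partial_t(1-\abs u^2))(1-\abs u^2)\,dx$, which equals its symmetric counterpart by evenness). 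Passing to the limit along the approximation, using the continuous dependence just established and the continuity of $E$ and of the generalized momentum on $\phi+H^1$, yields conservation of both functionals on the whole maximal interval.
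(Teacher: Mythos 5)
Your overall architecture (Duhamel, fixed point, blow-up alternative, regularization for the conservation laws) is reasonable, but the central analytic claim on which everything rests is not available under the hypothesis \eqref{Wn}: the map $v\mapsto v(W*(1-\abs{v}^2))$ is \emph{not} locally Lipschitz from $\phi+H^1(\R^N)$ into $H^1(\R^N)$, and in general does not even take values in $L^2$. Already for the local case $W=\delta$ in dimension $N=3$, the gradient of the cubic term behaves like $\abs{w}^2\grad w$ with $w\in H^1$, which only lies in $L^{3/2}$-type spaces, not in $L^2$; under \eqref{Wn} the situation is the same, since the assumptions $q_3,q_4>\frac N2$ (rather than $\geq N$) only give bounds of the form $\LL{\grad g_j(w)}{\rho_j'}\leq C(1+\norm{w}_{W^{1,r_j}})$ with $\rho_j'\in(\frac{2N}{N+2},2]$, exactly as in Lemma~\ref{cotas-g}. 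Consequently a contraction on a ball of $C([-T,T],H^1)$ using only the unitarity of $e^{it\Delta}$ cannot close for $N\geq 2$. The paper's proof replaces this by Kato's method: the Lipschitz estimates are proved in the dual Lebesgue exponents $L^{\rho_j'}$ (Lemmas~\ref{young2}, \ref{young1}, \ref{cotas-g}), and the fixed point is run, via Strichartz estimates, in the space $X_{T,M}\subset L^\infty((-T,T),H^1)\cap L^q((-T,T),W^{1,r})$ equipped with the weaker metric $d_T$ involving only $L^\infty L^2$ and $L^qL^r$ norms (no derivatives), following Theorem 4.4.6 in Cazenave. This is the missing idea, and it is precisely where the elaborate exponent relations in \eqref{Wn} enter.

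A second, related gap is your assertion that the contraction ``automatically yields continuous dependence on $w_0$ in $C(I,H^1)$.'' Because the contraction metric does not control derivatives, continuous dependence in $H^1$ is not automatic; the paper obtains it by first proving convergence in $C([-T,T],L^2)\cap L^q L^r$, then using conservation of energy together with the continuity estimate \eqref{diff-F} for the potential functional (Lemma~\ref{lema-dif-F}) to upgrade to convergence of $\LL{\grad w_n}{2}$, and finally a uniform $C^1([-T,T],H^{-1})$ bound plus Proposition 1.3.14 in \cite{cazenave}. This is also why the statement of Theorem~\ref{teo-local} asserts continuity of the flow map only under the additional hypothesis that $W$ is real-valued and even. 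Your regularization strategy for the conservation laws is workable in spirit (the paper instead gets energy conservation directly from the abstract theorem, and proves momentum conservation in Section~\ref{conservation} after defining the generalized momentum via the operators $L_j$), but note that in the paper Proposition~\ref{prop-H2} is proved assuming Theorem~\ref{global}, so invoking it at this stage would need the $H^2$-persistence argument to be carried out at the local level first.
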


It is an open question to establish which are the exact implications of change of sign of the Fourier
transform of the potential for the global existence of the solutions of \eqref{NGP}.
As proposed in  \cite{berloff0}, a way to handle this problem would be to add a higher-order
nonlinear term in \eqref{GP-full}
to avoid the mass concentration phenomenon, maintaining the correct phonon-roton dispersion curve.

This paper is organized as follows. In the next section we give several results
about positive definite and positive distributions. In Section 3 we establish
some convolution inequalities that involve the hypothesis \eqref{Wn} and
we give the proof of Corollary~\ref{cor}. We prove the local well-posedness
in Section~\ref{sec-local} and also Propositions \ref{convergencia} and \ref{prop-H2}.
Theorem~\ref{global} is completed in Section~\ref{sec-global}.
In Section~\ref{gallo} we briefly recall the arguments that lead to Theorem~\ref{teo-gallo}
and in Section~\ref{conservation} we study the conservation of momentum and mass.

%%%%%%%%%%%%%%%%%%%%%%%%%%%%%%%%%%%%%%%%%%%%%%%%%%%%%%%%%%%%%%%%%%%%%%%%%%%%%%%%%%%%%%%%%%%%%%%%%%%%%%%%%%%%%%%%%%%%%%%%%%
\section{Positive definite and positive distributions}\label{section-pos}
The purpose of this section is to recall some classical results for positive
definite and positive distributions, in the context
of Theorem~\ref{global}. We also state some properties
that we do not use in the next sections, but are
useful to better understand the type  of potentials considered
in Theorem~\ref{global}.

L. Schwartz in \cite{sc} defines that a (complex-valued) distribution $T$
is positive definite if
\bq\label{def-pos2}
 \langle T,\phi*\breve{\phi} \rangle \geq 0, \quad \forall \phi \in C_0^\infty(\R^N;\C),\eq
with $\breve{\phi}(x)=\overline{\phi}(-x)$. In virtue of our hypothesis on $W,$ we have preferred to adopt the simpler
definition \eqref{def-pos}. The relation between these two
possible definitions is given in the following lemma.
\begin{lema}\label{def-equiv} Let $T$ be a real-valued distribution.
\begin{itemize}
\item[$(i)$] If $T$ is positive definite \pare{in the sense of \eqref{def-pos}} and even, then  $T$
fulfils \eqref{def-pos2}.
\item[$(ii)$] If $T$ verifies \eqref{def-pos2}, then $T$ is even.
\end{itemize}
In particular, an even real-valued distribution is positive definite \pare{in the sense of \eqref{def-pos}} if and only if
it satisfies \eqref{def-pos2}.
\end{lema}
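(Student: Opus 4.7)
The plan is to decompose an arbitrary complex test function into real and imaginary parts and expand the relevant convolutions, so that the difference between \eqref{def-pos} and \eqref{def-pos2} becomes explicit.

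For part (i), write $\phi = \phi_1 + i\phi_2$ with $\phi_1,\phi_2 \in C_0^\infty(\R^N;\R)$; then $\breve{\phi} = \widetilde{\phi_1} - i\,\widetilde{\phi_2}$, so a direct computation gives
\begin{equation*}
\phi * \breve{\phi} \;=\; \phi_1 * \widetilde{\phi_1} + \phi_2 * \widetilde{\phi_2} \;+\; i\bigl(\phi_2 * \widetilde{\phi_1} - \phi_1 * \widetilde{\phi_2}\bigr).
\end{equation*}
Pairing with $T$ (which is real-valued), the first two terms are $\geq 0$ by \eqref{def-pos}. The imaginary part is a real-valued test function; I would observe that $\widetilde{\phi_2 * \widetilde{\phi_1}} = \widetilde{\phi_2} * \phi_1 = \phi_1 * \widetilde{\phi_2}$, so the function $g := \phi_2 * \widetilde{\phi_1} - \phi_1 * \widetilde{\phi_2}$ is antisymmetric, $\widetilde{g} = -g$. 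Since $T$ is even, $\langle T, g\rangle = \langle T, \widetilde g\rangle = -\langle T, g\rangle$, hence $\langle T, g\rangle = 0$. Thus $\langle T, \phi*\breve\phi\rangle \geq 0$ and \eqref{def-pos2} holds.

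For part (ii), the same decomposition and the fact that $\langle T, \phi*\breve{\phi}\rangle$ must be real (being $\geq 0$) forces the imaginary part to vanish: for every $\phi_1,\phi_2 \in C_0^\infty(\R^N;\R)$,
\begin{equation*}
\langle T, \phi_2 * \widetilde{\phi_1}\rangle \;=\; \langle T, \phi_1 * \widetilde{\phi_2}\rangle \;=\; \langle T, \widetilde{\phi_2 * \widetilde{\phi_1}}\rangle,
\end{equation*}
using the identity $\phi_1 * \widetilde{\phi_2} = \widetilde{\phi_2 * \widetilde{\phi_1}}$ from before. Given an arbitrary $\psi \in C_0^\infty(\R^N;\R)$, I would then take $\phi_2 = \psi$ and $\phi_1 = \rho_\ve$ a standard radial mollifier (so $\widetilde{\rho_\ve} = \rho_\ve$), obtaining $\langle T, \psi * \rho_\ve\rangle = \langle T, \widetilde{\psi * \rho_\ve}\rangle$. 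As $\ve \to 0$, $\psi * \rho_\ve \to \psi$ and $\widetilde{\psi*\rho_\ve} \to \widetilde{\psi}$ in $C_0^\infty(\R^N)$ (with supports in a fixed compact set), so continuity of $T$ yields $\langle T, \psi\rangle = \langle T, \widetilde\psi\rangle$, i.e.\ $T$ is even.

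The argument is essentially algebraic once the expansion of $\phi*\breve{\phi}$ is in hand; the only genuine care is in part (ii), where one must verify that a suitable approximation of an arbitrary real test function $\psi$ can be written in the form $\phi_2 * \widetilde{\phi_1}$ appearing naturally from the decomposition. The symmetric-mollifier trick takes care of this and is the one step worth flagging, since without the symmetry of $\rho_\ve$ the two sides of the limit would not both collapse to $\langle T,\psi\rangle$ and $\langle T,\widetilde\psi\rangle$ respectively.
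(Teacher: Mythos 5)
Your proof is correct. For part (i) you follow essentially the same route as the paper: decompose $\phi=\phi_1+i\phi_2$, expand $\phi*\breve\phi=\phi_1*\widetilde{\phi}_1+\phi_2*\widetilde{\phi}_2+i(\phi_2*\widetilde{\phi}_1-\phi_1*\widetilde{\phi}_2)$, use \eqref{def-pos} on the two real terms, and kill the imaginary part by evenness of $T$; the paper phrases the last step as the identity $\langle T,\widetilde{\phi}_1*\phi_2\rangle=\langle T,\phi_1*\widetilde{\phi}_2\rangle$, which is exactly your observation that the imaginary part $g$ satisfies $\widetilde g=-g$. The genuine difference is part (ii): the paper does not prove it at all, but simply refers to Schwartz's book, whereas you give a short self-contained argument — nonnegativity of $\langle T,\phi*\breve\phi\rangle$ forces its imaginary part to vanish (this uses that $T$ is real-valued, which is in the hypotheses), yielding $\langle T,\phi_2*\widetilde{\phi}_1\rangle=\langle T,\widetilde{\phi_2*\widetilde{\phi}_1}\rangle$ for all real test functions, and then the choice $\phi_1=\rho_\ve$ a symmetric mollifier together with the convergence $\psi*\rho_\ve\to\psi$ in $C_0^\infty$ (supports in a fixed compact set, uniform convergence of all derivatives) and the continuity of $T$ gives $\langle T,\psi\rangle=\langle T,\widetilde\psi\rangle$. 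This is a clean and correct substitute for the citation; the only step worth flagging is indeed the one you flag, namely that the mollifier must be even so that both sides of the identity pass to the limits $\langle T,\psi\rangle$ and $\langle T,\widetilde\psi\rangle$ respectively. So your write-up is, if anything, more complete than the paper's, at the cost of reproving a classical fact that the paper is content to quote.
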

\begin{proof}
Suppose that $T$ is positive definite in the sense of \eqref{def-pos}.
Let $\phi \in C_0^\infty(\R^N;\C)$, with $\phi=\phi_1+ i \phi_2$,
$\phi_1,\phi_2 \in C_0^\infty(\R^N;\R)$. Then
\bq\label{dem1}
\langle T,\phi*\breve{\phi} \rangle =
\langle T,\phi_1*\widetilde{\phi}_1 \rangle+\langle T,\widetilde{\phi}_2*\phi_2 \rangle
+i\langle T,\widetilde{\phi}_1*{\phi_2}\rangle- i\langle T, \phi_1*\widetilde{\phi}_2 \rangle.
\eq
Since $W$ is even,
$$\langle T,\widetilde{\phi}_1*{\phi}_2\rangle=
\langle T,{{\phi}_1*\widetilde{\phi}_2}\rangle.$$
Therefore the imaginary part in the r.h.s. of \eqref{dem1} is zero.
The real part is positive because $T$ is positive definite, which
implies that $T$ verifies \eqref{def-pos2}.

For the proof of (ii), see \cite{sc}.
\end{proof}

The next result characterizes the positive definite distributions
under the hypotheses of Theorem~\ref{global}. In particular, it gives a simple
way to check the positive definiteness in terms of the Fourier transform.

\begin{prop}\label{lema-def}
Let $W\in \M_{2,2}(\R^N)$ be an even real-valued distribution. The following assertions are equivalent
\begin{itemize}
\item[$(i)$] $W$ is a positive definite distribution.
\item[$(ii)$] $\wh W\in L^\infty(\R^N)$ and $\wh W(\xi)\geq 0$ for almost every $\xi\in \R^N$.
\item[$(iii)$] For every $f\in L^2(\R^N;\R)$,
\bqq%\label{pos-L2}
\int_{\R^N}(W*f)(x)f(x)\,dx \geq 0.
\eqq
\end{itemize}
\end{prop}
\begin{proof}
{(i) $\Rightarrow$ (ii)}.  By Lemma~\ref{def-equiv}, we may apply the so-called Schwartz-Bochner
Theorem (see \cite{sc}, p.~276). Then there exists a positive measure $\mu\in S'(\R^N)$ such that
$\wh W=\mu$. Since $W\in \M_{2,2}(\R^N)$, we have that $\wh W\in L^\infty(\R^N)$,
and therefore $\wh W$ is a nonnegative bounded function.\\
{(ii) $\Rightarrow$ (iii)}. Since $W\in \M_{2,2}(\R^N)$, $W*f\in L^2(\R^N)$.
From the fact that $S(\R^N)$ is dense in $L^2(\R^N)$, we also have  that
% \bq\label{dem-wh}
$$\wh{W*f}=\wh W \wh f.$$
Using that $f$ is real-valued, by Parseval's theorem we finally deduce
$$ \int_{\R^N}(W*f)(x)f(x)\,dx=(2\pi)^{-N} \int_{\R^N}\wh W(\xi)\abs{\wh f(\xi)}^2\,d\xi\geq 0,
$$
where we have used that $\wh W\geq 0$ for the last inequality.\\
{(iii) $\Rightarrow$ (i)}. This implication directly follows from the fact that $C_0^{\infty}(\R^N;\R)\subset L^2(\R^N;\R)$.
\end{proof}

We remark that a positive definite distribution
is not necessarily a positive distribution.
For instance, we consider the Laguerre-Gaussian functions
\begin{equation}
W_m(x)=e^{-\abs{x}^2}\sum_{k=0}^m\dfrac{(-1)^k}{k!} {\binom{m+\frac{N}2}{m-k}} \abs{x}^{2k},  \quad x\in \R^N, \ m\in\N.
\label{laguerre}
\end{equation}
These functions are negative in some subset of $\R^N$ and since $\wh W_m\geq 0$ (see e.g.~\cite{fasshauer}, p.~38), Proposition~\ref{lema-def}
shows that they are positive definite functions. We also have that $W_m\in L^{1}(\R^N)\cap L^{\infty}(\R^N)$.
Then Corollary~\ref{cor} gives global existence of \eqref{NGP} for the potential \eqref{laguerre} in any dimension $N\geq 2$.

In the case {that} the considered distribution is actually a bounded function, its positive definiteness
gives some regularity. In other direction, the concept of positive definiteness may be related
to the same concept used for matrices. We recall some of these results in the next lemma.

 \begin{lema}\label{lema-cont} Let $W$ be an even real-valued positive definite distribution.
\begin{itemize}
%\item[(i)] If $W\in\M_{2,2}(\R^N)$, then for all $u,v\in L^2(\R^N;\R)$,
%\bq \label{dem-3}
%\int_{\R^N} (W*u)(x)v(x)\,dx=\int_{\R^N} (W*v)(x)u(x)\,dx.
%\eq
\item[$(i)$] If $W\in L^\infty(\R^N)$,
then it coincides almost everywhere with
a continuous function.
%\item[(iii)] Let $k\in \N$. If $W$ belongs to $C^{2k}$ in some neighborhood of the origin, then $W\in C^{2k}(\R^N)$.
\item[$(ii)$] If $W$ is continuous, then
$W(0)= \norm{W}_{L^\infty(\R^N)}$ and for all $x_1, \dots, x_m\in \R^N$, ${m\geq 1}$,
the matrix given by  ${A_{j k}=W(x_j-x_k)}$, ${j,k\in\{1,\dots,m\}}$, is a positive semi-definite matrix.
\end{itemize}
\end{lema}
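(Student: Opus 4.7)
The plan is to combine the Schwartz-Bochner characterization already invoked in the proof of Proposition~\ref{lema-def} with a duality argument, reducing both statements to a Fourier inversion formula.

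For part (i), since $W$ is even, real-valued and positive definite, Lemma~\ref{def-equiv} gives \eqref{def-pos2} and the Schwartz-Bochner theorem provides a positive tempered measure $\mu$ with $\wh W=\mu$. The heart of the argument is to upgrade $\mu$ from a tempered measure to a finite measure, using the extra hypothesis $W\in L^\infty(\R^N)$. I would test $\mu$ against the rescaled Gaussian $\eta_R(\xi)=e^{-\ab{\xi}^2/R^2}$; using $\langle \wh W,\eta_R\rangle=\langle W,\widehat{\eta_R}\rangle$, the homogeneity of the Fourier transform and a change of variables give
\[
\int_{\R^N}\eta_R(\xi)\,d\mu(\xi)=\int_{\R^N}W(y/R)\,\wh\eta(y)\,dy,
\]
where $\eta(\xi)=e^{-\ab{\xi}^2}$ and $\wh\eta$ is positive and integrable. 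The right-hand side is bounded by $\LL{W}{\infty}\LL{\wh\eta}{1}$ independently of $R$, and letting $R\to\infty$, monotone convergence yields $\mu(\R^N)\leq\LL{W}{\infty}\LL{\wh\eta}{1}<\infty$.

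Once $\mu$ is known to be a finite positive measure, Fourier inversion in $S'(\R^N)$ identifies $W$ with the continuous bounded function $x\mapsto(2\pi)^{-N}\int_{\R^N}e^{ix\cdot\xi}\,d\mu(\xi)$, completing (i). For part (ii) this pointwise representation directly gives $\ab{W(x)}\leq(2\pi)^{-N}\mu(\R^N)=W(0)$, hence $W(0)\geq \LL{W}{\infty}$. For the matrix claim, I would expand
\[
\sum_{\ell,k=1}^m W(x_\ell-x_k)z_\ell\bar z_k=(2\pi)^{-N}\int_{\R^N}\Bigl|\sum_{\ell=1}^m z_\ell e^{ix_\ell\cdot\xi}\Bigr|^2 d\mu(\xi)\geq 0
\]
for arbitrary $z_1,\dots,z_m\in\C$, and then specialize to real coefficients (the matrix is real symmetric since $W$ is real-valued and even) to conclude the stated positive semi-definiteness.

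The only genuinely delicate point is establishing the finiteness of $\mu$: without the $L^\infty$ hypothesis on $W$ the argument breaks down, as a generic positive tempered measure may have polynomial growth at infinity, so this is where the additional regularity of $W$ is essentially used. Once finiteness is in hand, the remaining conclusions follow from standard Bochner-type manipulations and need no further ingredient beyond continuity of $W$.
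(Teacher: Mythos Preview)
Your argument for (i) is correct and supplies the details that the paper omits (the paper simply refers to Schwartz \cite{sc}). The device of testing $\mu=\wh W$ against rescaled Gaussians to force $\mu(\R^N)<\infty$ from $W\in L^\infty(\R^N)$ is clean, and the identification of $W$ with the inverse Fourier transform of a finite measure then gives the desired continuous representative.

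There is, however, a genuine gap in your treatment of (ii). You invoke the pointwise representation
\[
W(x)=(2\pi)^{-N}\int_{\R^N}e^{ix\cdot\xi}\,d\mu(\xi),
\]
but this was obtained in (i) under the hypothesis $W\in L^\infty(\R^N)$, whereas (ii) only assumes $W$ continuous. Your finiteness argument for $\mu$ relied on the bound $\bigl|\int W(y/R)\,\wh\eta(y)\,dy\bigr|\le \LL{W}{\infty}\LL{\wh\eta}{1}$, which is not available here. Thus both conclusions of (ii)---that $W(0)\geq \LL{W}{\infty}$ and that the matrix $(W(x_\ell-x_k))$ is positive semi-definite---are being derived from a representation you have not justified under the stated hypothesis; the reasoning is circular.

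The fix is to reverse the order: establish the matrix inequality first, directly from \eqref{def-pos} and continuity, and then read off boundedness. Take an even mollifier $\psi_\ve\in C_0^\infty(\R^N;\R)$ and apply \eqref{def-pos} to $\phi=\sum_{\ell=1}^m c_\ell\,\psi_\ve(\cdot-x_\ell)$, $c_\ell\in\R$; this gives
\[
\sum_{\ell,k=1}^m c_\ell c_k\,(W*\psi_\ve*\widetilde\psi_\ve)(x_\ell-x_k)\ge 0,
\]
and since $W$ is continuous, letting $\ve\to 0$ yields $\sum_{\ell,k} c_\ell c_k\,W(x_\ell-x_k)\ge 0$. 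The case $m=1$ gives $W(0)\ge 0$; the case $m=2$ with $x_1=0$, $x_2=x$, together with $W(-x)=W(x)$, gives $W(0)^2\ge W(x)^2$, hence $\ab{W(x)}\le W(0)$ and $W\in L^\infty(\R^N)$. At that point your part (i) may be invoked to recover the Fourier representation, but it is no longer needed for (ii).
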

\begin{proof}
Taking into consideration Lemma~\ref{def-equiv}, these statements are proved in \cite{sc}.
\end{proof}

The importance of the condition \eqref{hat-W} is that it gives the following coercivity property to the potential energy.
\begin{lema}\label{lema-L2}
Assume that  $W\in \M_{2,2}(\R^N)$ verifies \eqref{hat-W}. Then for all $f\in L^2(\R^N;\R)$,
\bq\label{L2-1}
 \sigma\LL{f}{2}^2\leq \int_{\R^N}(W*f)(x)f(x)\,dx \leq \norm{W}_{2,2} \LL{f}{2}^2. \eq
\end{lema}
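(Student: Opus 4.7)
The plan is to reuse almost verbatim the Fourier-analytic machinery already invoked in the proof of Proposition~\ref{lema-def}, combined with the hypothesis \eqref{hat-W}. Both inequalities will drop out once one rewrites the pairing $\int (W*f)f\,dx$ on the Fourier side.

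For the upper bound, since $W\in\M_{2,2}(\R^N)$ the map $f\mapsto W*f$ is bounded from $L^2$ to $L^2$ with norm $\|W\|_{2,2}$. A direct application of the Cauchy--Schwarz inequality therefore gives
\begin{equation*}
\int_{\R^N}(W*f)(x)f(x)\,dx\leq \LL{W*f}{2}\LL{f}{2}\leq \norm{W}_{2,2}\LL{f}{2}^2,
\end{equation*}
which is the right-hand inequality in \eqref{L2-1}. No positivity of $\wh W$ is needed here.

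For the lower bound, I would argue as in the implication \textit{(ii)} $\Rightarrow$ \textit{(iii)} of Proposition~\ref{lema-def}: since $W\in\M_{2,2}(\R^N)$, $W*f$ lies in $L^2(\R^N)$ and, by the density of $S(\R^N)$ in $L^2(\R^N)$, the identity \eqref{dem-wh} holds, i.e.\ $\wh{W*f}=\wh W\,\wh f$ pointwise a.e. Because $f$ is real valued, $|\wh f(\xi)|^2$ is a nonnegative real-valued function of $\xi$, so Parseval's identity yields
\begin{equation*}
\int_{\R^N}(W*f)(x)f(x)\,dx=(2\pi)^{-N}\int_{\R^N}\wh W(\xi)\,|\wh f(\xi)|^2\,d\xi.
\end{equation*}
Now the hypothesis \eqref{hat-W} says $\wh W(\xi)\geq \sigma$ for a.e.\ $\xi$, and $\wh W$ is bounded so the integrand is genuinely integrable; pulling out the infimum and invoking Plancherel again gives
\begin{equation*}
(2\pi)^{-N}\int_{\R^N}\wh W(\xi)\,|\wh f(\xi)|^2\,d\xi\geq \sigma(2\pi)^{-N}\int_{\R^N}|\wh f(\xi)|^2\,d\xi = \tilde\sigma\,\LL{f}{2}^2,
\end{equation*}
with $\tilde\sigma=\sigma(2\pi)^{-N}$ as stated, which completes the left-hand inequality in \eqref{L2-1}.

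There is no real obstacle here: the only point that requires any care is the justification of the multiplier identity $\wh{W*f}=\wh W\,\wh f$ for $f\in L^2(\R^N)$ rather than $f\in S(\R^N)$, and this has already been handled in the proof of Proposition~\ref{lema-def}. The sign of $\wh W$ comes in exclusively through the essential infimum, and realness of $f$ is used only to guarantee that $|\wh f|^2$ is a legitimate nonnegative weight so the inequality $\wh W\geq\sigma$ can be pulled outside the integral.
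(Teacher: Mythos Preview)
Your proof is correct and follows essentially the same route as the paper: the lower bound via Parseval and the pointwise bound $\wh W\geq\sigma$, and the upper bound as an immediate consequence of $W\in\M_{2,2}$ (the paper does not even spell out the Cauchy--Schwarz step). One cosmetic remark: $|\wh f|^2$ is nonnegative regardless of whether $f$ is real; realness of $f$ is actually used to identify $\int(W*f)f$ with $\int(W*f)\bar f$ so that Parseval applies in the form you wrote.
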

\begin{proof}
The first inequality follows from Parseval's theorem,
$$\int_{\R^N}(W*f)(x)f(x)\,dx= (2\pi)^{-N}\int_{\R^N}\wh W(\xi)\abs{\wh f(\xi)}^2\,d\xi\geq \sigma\LL{f}2^2.$$
The second inequality in \eqref{L2-1} is immediate since $W\in \M_{2,2}(\R^N)$.
\end{proof}

The purpose of the last lemma in this section is to establish some properties of
the positive distributions which appear in Theorem~\ref{global}. In particular, we show that for these distributions
  \eqref{Wn} is automatically verified if $1\leq N\leq 3$.
\begin{lema}\label{lema-1-1} Let $W\in \M_{1,1}(\R^N)$ be a positive distribution. Then
$W\in \M_{p,p}(\R^N)$, for any $1\leq p\leq \infty$ and $W$ is a positive Borel
measure of finite mass. If $1\leq N\leq 3$ we also have that $W$ satisfies \eqref{Wn}.
\end{lema}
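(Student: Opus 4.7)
The plan is to first show that a positive distribution that is bounded on $L^1$ is necessarily given by a finite positive Borel measure, and then to use Young's inequality to upgrade this to $\M_{p,p}$ for all $p$. The last claim will follow directly from Proposition~\ref{regularidad}~(i).

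First I would use the classical fact (see e.g.~Schwartz \cite{sc}) that every positive distribution $W$ is of order zero and is represented by a unique positive Radon measure $\mu$ in the sense that $\langle W,\phi\rangle=\int \phi\,d\mu$ for every $\phi\in C_0^\infty(\R^N)$. To verify that $\mu$ has finite total mass, I would pick a nonnegative $f\in C_0^\infty(\R^N)$ with $\LL{f}{1}=1$ and use the fact that, for a test function, the convolution with the distribution $W$ agrees with the convolution of the measure $\mu$ with $f$: $(W*f)(x)=\int f(x-y)\,d\mu(y)$. Since both $f$ and $\mu$ are nonnegative, Fubini's theorem gives
\bqq
\LL{W*f}{1}=\int_{\R^N}\!\!\int_{\R^N} f(x-y)\,d\mu(y)\,dx=\mu(\R^N).
\eqq
Combining this with the hypothesis $W\in\M_{1,1}(\R^N)$, we obtain $\mu(\R^N)\leq\norm{W}_{1,1}<\infty$, so $\mu$ is a finite positive Borel measure, and $W$ coincides with $\mu$ as a tempered distribution.

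Once $W$ is identified with a finite positive Borel measure, the statement $W\in\M_{p,p}(\R^N)$ for every $1\leq p\leq\infty$ follows from the standard Young-type inequality for measures $\norm{\mu*f}_{L^p}\leq\mu(\R^N)\norm{f}_{L^p}$, which is obtained by Minkowski's integral inequality applied to $(\mu*f)(x)=\int f(x-y)\,d\mu(y)$ (the endpoints $p=1,\infty$ being immediate and the intermediate cases following by Riesz--Thorin, or directly from Minkowski).

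Finally, for the case $1\leq N\leq 3$, since we have just shown $W\in\M_{p,p}(\R^N)$ for every $1\leq p\leq\infty$, in particular $W\in\M_{2,2}(\R^N)\cap\M_{3,3}(\R^N)$, so Proposition~\ref{regularidad}~(i) immediately yields \eqref{Wn}. The only delicate step is the first one, namely passing from the abstract positivity of the distribution $W$ together with the $L^1$-boundedness of convolution to the concrete representation by a finite positive Borel measure; the rest is a routine application of Young's inequality and of the preceding proposition.
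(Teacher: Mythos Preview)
Your proof is correct and follows essentially the same path as the paper: identify $W$ with a finite positive Borel measure, deduce $W\in\M_{p,p}(\R^N)$ for all $1\leq p\leq\infty$ via Young's inequality/interpolation, and invoke Proposition~\ref{regularidad}(i) for the case $1\leq N\leq 3$. The only cosmetic difference is the order of the first two steps: the paper first cites the classical identification of $\M_{1,1}(\R^N)$ with the space of finite Borel measures and then uses positivity of the distribution to conclude the measure is positive, whereas you first pass from a positive distribution to a positive Radon measure and then extract finiteness of its total mass directly from the bound $\norm{W}_{1,1}$.
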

\begin{proof} Since  $W\in \M_{1,1}(\R^N)$, it is well known that
$W$ is a (complex-valued) finite Borel measure. Then
$W\in \M_{\infty,\infty}(\R^N)$ and by interpolation $W\in \M_{p,p}(\R^N)$
for any $1\leq p\leq \infty$.
Finally, the fact that $W$ is a positive distribution implies
that it is a positive measure (cf. \cite{sc}). By Proposition~\ref{regularidad} we conclude
that $W$ satisfies \eqref{Wn}, if $1\leq N\leq 3$.
\end{proof}
%%%%%%%%%%%%%%%%%%%%%%%%%%%%%%%%%%%%%%%%%%%%%%%%%%%%%%%%%%%%%%%%%%%%%%%%%%%%%%%%%%%%%%%%%%%%%%%%%%%%%%
\section{Some consequences of assumption {(\ref{Wn})}}
We first establish some inequalities involving the convolution with $W$ that explain in part how
the hypothesis \eqref{Wn} works. After that, we give the proof of Proposition~\ref{regularidad} and Corollary~\ref{cor}.

From now on we adopt the standard notation $C(\cdot,\cdot, \dots )$ to represent a generic constant
that depends only on each of its arguments, and possibly on some fixed numbers such as the dimension.
In the case that $W\in \M_{p,q}(\R^N)$ we use $C(W)$ to denote a constant that only
depends on the norm $\norm{W}_{p,q}$. We also use the notation $p'$ for the 
conjugate exponent of $p$ given by $1/p+1/p'=1$.

\begin{lema}\label{young2}
Let $W\in \M_{p_1,q_1}(\R^N)\cap \M_{p_2,q_2}(\R^N)\cap \M_{p_3,q_3}(\R^N)$, with
\begin{equation*}
p_1, p_2,p_3,q_1,q_2,q_3\geq 1\quad \textup{and}\quad\frac{1}{p_3}+\frac{1}{q_2}=\frac{1}{q_1}.
\label{cond}
\end{equation*}
Suppose that there are $s_1,s_2\geq 1$, such that
\begin{equation*}
\frac{1}{p_1}-\frac{1}{p_3}=\frac{1}{s_1}, \quad \frac{1}{q_1}-\frac{1}{q_3}=\frac{1}{s_2}.
\end{equation*}
Then for any $u,v\in S(\R^N)$
\begin{align*}
\LL{(W*u)v}{q_1}&\leq{\norm{W}_{p_2,q_2}} \LL{u}{p_2}\LL{v}{p_3},\\
\LL{(W*u)v}{q_1}&\leq {\norm{W}_{p_3,q_3}}\LL{u}{p_3}\LL{v}{s_2},\\
\LL{W*(uv)}{q_1}&\leq {\norm{W}_{p_1,q_1}} \LL{u}{p_3}\LL{v}{s_1}.
\end{align*}
% More precisely, $C(W)=\max\{\norm{W}_{p_1,q_1},\norm{W}_{p_2,q_2},\norm{W}_{p_3,q_3}\}$.
\end{lema}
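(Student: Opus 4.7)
The plan is to derive each of the three inequalities by combining Hölder's inequality with the defining bound for the multiplier space $\M_{p,q}(\R^N)$, using the three exponent relations exactly once each. The main thing to verify is that the arithmetic of reciprocals works out so that Hölder applies with the indicated exponents.

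For the first inequality, I would first apply Hölder to the product $(W*u)v$. The relation $\frac{1}{p_3}+\frac{1}{q_2}=\frac{1}{q_1}$ gives
\[
\LL{(W*u)v}{q_1}\leq \LL{W*u}{q_2}\LL{v}{p_3}.
\]
Then the hypothesis $W\in \M_{p_2,q_2}(\R^N)$ yields $\LL{W*u}{q_2}\leq \norm{W}_{p_2,q_2}\LL{u}{p_2}$, and combining these gives the first bound.

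For the second inequality, the idea is the same but I would invoke the third exponent relation instead. Since $\frac{1}{q_1}-\frac{1}{q_3}=\frac{1}{s_2}$, Hölder gives
\[
\LL{(W*u)v}{q_1}\leq \LL{W*u}{q_3}\LL{v}{s_2},
\]
and $W\in \M_{p_3,q_3}(\R^N)$ then controls $\LL{W*u}{q_3}$ by $\norm{W}_{p_3,q_3}\LL{u}{p_3}$.

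For the third inequality I would move the convolution to the outside and apply Hölder to $uv$ first. The relation $\frac{1}{p_1}-\frac{1}{p_3}=\frac{1}{s_1}$ says $\frac{1}{p_1}=\frac{1}{p_3}+\frac{1}{s_1}$, so
\[
\LL{uv}{p_1}\leq \LL{u}{p_3}\LL{v}{s_1},
\]
and then $W\in \M_{p_1,q_1}(\R^N)$ gives $\LL{W*(uv)}{q_1}\leq \norm{W}_{p_1,q_1}\LL{uv}{p_1}$. The constant $C(W)$ is taken as the maximum of the three multiplier norms used, and each exponent that appears lies in $[1,\infty)$ by hypothesis so Hölder is legitimate. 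There is no real obstacle here; the content of the lemma is precisely the bookkeeping baked into the three exponent identities in \eqref{Wn}, and the proof is a direct three-line verification for each bound.
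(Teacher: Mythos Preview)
Your proof is correct and matches the paper's approach exactly: the paper simply states that the lemma is a direct consequence of H\"older's inequality and the hypotheses on $W$, and you have written out precisely those three applications of H\"older combined with the appropriate $\M_{p_i,q_i}$ bounds.
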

\begin{proof}
The proof is a direct consequence of H\"older inequality and the hypotheses on $W$.
\end{proof}

\begin{lema}\label{last}
Assume that $W$ satisfies \eqref{Wn} {and that} $N\geq 4$. {Then} $W\in \M_{\frac{N}{N-2},2}(\R^N)$,
$W\in \M_{\frac{N}{N-2},\frac{N}2}(\R^N)$ and $W\in \M_{2,\frac{N}2}(\R^N)$.
\end{lema}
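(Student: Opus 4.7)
The plan is to obtain the three inclusions by Riesz--Thorin interpolation. First I would observe that the duality $\M_{p,q}(\R^N)=\M_{q',p'}(\R^N)$, valid for $1<p\leq q<\infty$ and recalled after \eqref{Wn}, identifies $\M_{N/(N-2),2}(\R^N)$ with $\M_{2,N/2}(\R^N)$ (since $N/(N-2)\in(1,2]$ for $N\geq 4$). So the first and third claims are equivalent, and it suffices to prove $W\in\M_{N/(N-2),N/2}(\R^N)$ and $W\in\M_{N/(N-2),2}(\R^N)$.

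Both inclusions reduce to showing that the corresponding target lies in the convex hull of three known points in the set $\mathcal S=\{(1/p,1/q):W\in\M_{p,q}(\R^N)\}$, which is convex by Riesz--Thorin. For $T_2:=((N-2)/N,2/N)$ I would use
\[
A=(1/p_4,1/q_4),\quad B=(1-1/q_4,1-1/p_4),\quad C=(1/2,1/2),
\]
which belong to $\mathcal S$ by \eqref{Wn} and by duality applied to $\M_{p_4,q_4}$. Since the involution $(x,y)\mapsto(1-y,1-x)$ fixes both $T_2$ (note $(N-2)/N+2/N=1$) and $C$ while exchanging $A$ and $B$, it is natural to look for $T_2=\alpha A+\beta B+\gamma C$ with $\alpha=\beta$; a direct calculation then gives
\[
\alpha=\beta=\frac{N-4}{4N(1/p_4-1/q_4)},\qquad \gamma=1-2\alpha.
\]
The strict inequalities $1/p_4>(N-2)/N$ and $1/q_4<2/N$ from \eqref{Wn} yield $1/p_4-1/q_4>(N-4)/N$, hence $\alpha\in[0,1/2)$ and $\gamma>0$, and Riesz--Thorin gives $W\in\M_{N/(N-2),N/2}(\R^N)$.

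For $T_1:=((N-2)/N,1/2)$ the same scheme applies with $A=(1/p_1,1/q_1)$, $B=(1-1/q_1,1-1/p_1)$, $C=(1/2,1/2)$, using $1/p_1>(N-2)/N$ from the remark after \eqref{Wn} and $1/q_1\geq 1/2$ from $q_1\leq 2$. Since $T_1$ is not self-dual, $\alpha\neq\beta$ in general, but solving $\alpha A+\beta B+\gamma C=T_1$, $\alpha+\beta+\gamma=1$, yields
\[
\alpha+\beta=\frac{N-4}{2N(1/p_1-1/q_1)},\qquad \alpha-\beta=\frac{N-4}{2N(1/p_1+1/q_1-1)}.
\]
The inequalities on $(p_1,q_1)$ give $1/p_1-1/q_1>(N-4)/(2N)$ and $1/p_1+1/q_1-1>(N-4)/(2N)\geq 0$, so $\alpha+\beta<1$ (hence $\gamma>0$) and $\alpha-\beta\geq 0$; the remaining constraint $\beta\geq 0$ amounts to $\alpha-\beta\leq\alpha+\beta$, which reduces precisely to $1/q_1\geq 1/2$, i.e.\ to $q_1\leq 2$. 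Riesz--Thorin then gives $W\in\M_{N/(N-2),2}(\R^N)$, and the remaining inclusion follows by the duality observed in paragraph one.

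The main obstacle is the edge case $p_i=1$ for $i\in\{1,4\}$ permitted by \eqref{Wn}, where the duality step is not directly available. There I would replace the dual point by $(1/q_i',0)$, using the identification $\M_{1,q_i}(\R^N)=L^{q_i}(\R^N)\subset\M_{q_i',\infty}(\R^N)$ (from Young's inequality), and verify by an analogous but slightly modified linear-algebra computation that the same targets still lie in the resulting three-point convex hull; the strict inequality $q_1>2N/(N+2)$ from \eqref{Wn} is exactly what makes this work.
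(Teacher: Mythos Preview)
Your treatment of $T_2=((N-2)/N,2/N)$ is correct (there is a harmless slip: a direct computation gives $\alpha=(N-4)\big/\bigl(2N(1/p_4-1/q_4)\bigr)$, not $4N$, but the range check $\alpha\in[0,1/2)$ is unaffected since $1/p_4-1/q_4>(N-4)/N$). The genuine gap is in the $T_1=((N-2)/N,1/2)$ step. You need $\gamma>0$, i.e.\ $\alpha+\beta<1$, which by your own formula is $\tfrac{1}{p_1}-\tfrac{1}{q_1}>\tfrac{N-4}{2N}$. This does \emph{not} follow from \eqref{Wn}: the available bounds $\tfrac1{p_1}>\tfrac{N-2}{N}$ and $\tfrac1{q_1}<\tfrac{N+2}{2N}$ give only $\tfrac1{p_1}-\tfrac1{q_1}>\tfrac{N-6}{2N}$. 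For $N=8$, the admissible choice $\tfrac1{p_3}=\tfrac1{s_1}=\tfrac38+\varepsilon$, $\tfrac1{q_2}=\tfrac14-2\varepsilon$, $\tfrac1{s_2}=\tfrac12$, $p_2=2$, $(p_4,q_4)=(\tfrac65,5)$ satisfies every clause of \eqref{Wn} yet gives $\tfrac1{p_1}=\tfrac34+2\varepsilon$, $\tfrac1{q_1}=\tfrac58-\varepsilon$, hence $\tfrac1{p_1}-\tfrac1{q_1}=\tfrac18+3\varepsilon<\tfrac14$. Geometrically the line through $A=(1/p_1,1/q_1)$ and its dual $B$ is $x-y=\tfrac1{p_1}-\tfrac1{q_1}$; here $T_1$ satisfies $x-y=\tfrac14$ while $C$ satisfies $x-y=0$, so $T_1$ and $C$ lie on opposite sides of this line and $T_1\notin\operatorname{conv}\{A,B,C\}$.

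The paper repairs this by interpolating with a different triple: it places $T_1$ in the convex hull of $(1/2,1/2)$, $(1/p_1,1/q_1)$ and $(1/p_4,1/q_4)$. Both of the latter have first coordinate strictly larger than $(N-2)/N$, so on the vertical line $x=(N-2)/N$ the triangle is bounded by the two edges emanating from $(1/2,1/2)$; the edge to $(1/p_1,1/q_1)$ sits at height $\geq 1/2$ (since $q_1\leq 2$), the edge to $(1/p_4,1/q_4)$ at height $<1/2$ (since $q_4>N/2$), and $T_1$ is trapped between them. The reason your symmetric construction fails is that the dual of $(1/p_1,1/q_1)$ has first coordinate $1-1/q_1\leq 1/2$, so it does not furnish a point that is simultaneously to the right of $x=(N-2)/N$ \emph{and} below $y=1/2$; this is exactly the role played by $(1/p_4,1/q_4)$.
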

\begin{proof}
From the Riesz-Thorin interpolation theorem
and the fact that
$\left(\frac12,\frac2N \right)$ and $\left(\frac{N-2}{N},\frac{2}{N} \right)$ belong
to the convex hull of $$\left\{\left(\frac{1}{2},\frac12 \right),
\left(\frac1{p_1},\frac{1}{q_1}\right),\left(\frac1{p_3},\frac{1}{q_3}\right),\left(\frac1{p_4},\frac{1}{q_4}\right)\right\},$$
we conclude that  $W\in \M_{2,\frac{N}2}(\R^N)$ and
$W\in \M_{\frac{N}{N-2},\frac{N}2}(\R^N)$.  Since the conjugate exponent of $\frac{N}{N-2}$ is $\frac{N}{2}$,
$W\in \M_{2,\frac{N}2}(\R^N)$ implies that $W\in \M_{\frac{N}{N-2},2}(\R^N)$.
\end{proof}

\begin{lema}\label{young1}
%Let $W\in \M_{2,2}(\R^N)\cap \M_{p_3,q_3}(\R^N)\cap \M_{p_4,q_4}(\R^N)$, with
%$\frac{N}{N-2}> p_4\geq 1$, $\frac{2N}{N-2}>p_3\geq 2$,
%$\infty>q_3,q_4>\frac{N}{2}$ if $N\geq 3$ and $\infty\geq p_3,p_4,q_3,q_4\geq 1$ if $N=1,2$.
Assume that $W$ satisfies \eqref{Wn}. Then for any $u,v,w\in S(\R^N)$,
\begin{align}\label{young-2}
\LL{(W*(uv))w}{\tilde  \gamma}&\leq C(W) \LL{u}{\tilde  s}\LL{v}{\tilde  r}\LL{w}{\tilde  r},
\end{align}
for some $2> \tilde \gamma>\frac{2N}{N+2}$,
 $\frac{2N}{N-2}>\tilde  r, \tilde s>2 $ if $N\geq 3$,
and $2> \tilde  \gamma>1$, $\infty> \tilde  r,\tilde s> 2$ if $N=1,2$.
\end{lema}

\begin{proof}
If $N\geq 4$, by Lemma~\ref{last} we have that $W\in \M_{\frac{N}{N-2},{\frac{N}2}}(\R^N)$.
Since also $W\in\M_{p_4,q_4}(\R^N)$, from the Riesz-Thorin interpolation theorem
we deduce that there exist $\bar p$ and $\bar q$ such {that}
\begin{equation}
W\in\M_{\bar p,\bar q}(\R^N), \quad \frac{N}{N-1}<\bar p< \frac{N}{N-2},\quad \frac{N}{2}<\bar q<N.
\label{p-q}
\end{equation}
Now we set
$$\frac{1}{\tilde r}=\min\left\{\frac12\left(1-\frac1{\bar q}\right),\frac1{2\bar p}\right\}, \ \frac{1}{\tilde \gamma}=\frac{1}{\bar q}+\frac{1}{\bar r},
\  \frac{1}{\tilde s}=\frac{1}{\bar p}-\frac{1}{\tilde  r}.$$
In view of \eqref{p-q}, we have  $\frac{2N}{N+2}<\tilde  \gamma< 2$ and $2<\tilde  r,\tilde  s<\frac{N-2}{2N}$.
By H\"older inequality, we conclude that
\begin{align*}
\LL{(W*(uv))w}{\tilde \gamma}&\leq \LL{W*(uv)}{\bar q}\LL{w}{\tilde  r}\\
&\leq \norm{W}_{\bar p,\bar q}\LL{uv}{\bar p}\LL{w}{\tilde  r}\\
&\leq \norm{W}_{\bar p,\bar q}\LL{u}{\tilde  s}\LL{v}{\tilde  r}\LL{w}{\tilde  r}.
	\end{align*}
If $N=1,2,3$, the proof is simpler. It is sufficient to take $\bar q=2$, $\bar p=2$, $\tilde s=\tilde r=4$, $\tilde \gamma=\frac43$ in the last inequality to deduce
\eqref{young-2}.
\end{proof}

 \begin{lema}\label{lema-int-finita}
Assume that $W$ satisfies \eqref{Wn}.
\item[$(i)$] For any $u\in \phi+H^1(\R^N)$ we have $(W*(1-\abs{u}^2))(1-\abs{u}^2)\in L^1(\R^N)$
\item[$(ii)$] If $W$ is also an even real-valued distribution,  then for any $u\in \phi+H^1(\R^N)$ and $h\in H^1(\R^N)$,
\bq \label{dem-3}
\int_{\R^N} (W*\llave{u, h})(1-\abs{u}^2)\,dx=\int_{\R^N} (W*(1-\abs{u}^2))\llave{u, h}\,dx.
\eq
% where we have used the simplified notation $z_1\cdot z_2\equiv\llave{z_1,z_2}=\Re(z_1 \bar{z}_2).$
\end{lema}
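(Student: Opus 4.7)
The plan is to decompose both $1-|u|^2$ and $u\cdot h$ in the same way, via $u=\phi+w$ with $w\in H^1(\R^N)$:
\[
1-|u|^2=\bigl[(1-|\phi|^2)-2\phi\cdot w\bigr]-|w|^2,\qquad u\cdot h=\phi\cdot h+w\cdot h.
\]
Using \eqref{phi} together with $\phi\in L^\infty$ and $w,h\in H^1\subset L^2$, the bracketed term and $\phi\cdot h$ lie in $L^2(\R^N)$. The Sobolev embedding $H^1\hookrightarrow L^{2N/(N-2)}$ yields $|w|^2,\,w\cdot h\in L^1\cap L^{N/(N-2)}(\R^N)$ when $N\ge 3$ (the second exponent being replaced by any finite number when $N=1,2$); moreover, when $1\le N\le 4$ Sobolev also gives $|w|^2,\,w\cdot h\in L^2$.

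For (i), expanding $(W*(1-|u|^2))(1-|u|^2)$ along the above decomposition produces four terms. In dimensions $1\le N\le 4$ every factor is in $L^2(\R^N)$ and $W\in\M_{2,2}(\R^N)$ together with H\"older's inequality yields the $L^1$ bound. For $N\ge 5$ the ``bad'' piece $|w|^2$ only belongs to $L^{N/(N-2)}$, and Lemma~\ref{last} supplies the three extra multiplier properties $W\in\M_{N/(N-2),2}$, $\M_{2,N/2}$ and $\M_{N/(N-2),N/2}$ that cover, respectively, the mixed term $(W*|w|^2)\cdot(L^2\text{ part})$, the mixed term $(W*(L^2\text{ part}))\cdot|w|^2$, and the pure term $(W*|w|^2)\cdot|w|^2$; the remaining $L^2\times L^2$ term is handled by $\M_{2,2}$.

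For (ii), the same decomposition applied simultaneously to $u\cdot h$ and $1-|u|^2$ splits each side of \eqref{dem-3} into four cross-integrals, all of which converge absolutely by the estimates above. It is therefore enough to establish, for each resulting pair $(f,g)$, the identity $\int(W*f)g\,dx=\int f(W*g)\,dx$. For $f,g\in C_0^\infty(\R^N;\R)$ this is the elementary computation $\langle W,\tilde f *g\rangle=\langle W,\widetilde{\tilde f*g}\rangle=\langle W,\tilde g *f\rangle$, which combines Fubini's theorem with the evenness of $W$. A density argument, approximating each factor by Schwartz functions in the appropriate $L^p$ space and using the continuity of $f\mapsto W*f$ between those spaces, then yields \eqref{dem-3}. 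The main technical point is that in high dimensions one cannot rely on Parseval, since $1-|u|^2\notin L^2(\R^N)$ in general when $N\ge 5$; the whole argument must therefore be organized around the $L^2+L^{N/(N-2)}$ decomposition of the two factors, which is precisely what Lemma~\ref{last} is tailored to accommodate.
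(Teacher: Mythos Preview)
Your proof is correct and follows essentially the same route as the paper: decompose $1-|u|^2$ via $u=\phi+w$, use $W\in\M_{2,2}$ for the low-dimensional case and Lemma~\ref{last} to handle the $L^{N/(N-2)}$ piece in high dimensions, then invoke evenness of $W$ for (ii). The only cosmetic differences are that the paper splits at $N\le 3$ versus $N\ge 4$ (while you split at $N\le 4$ versus $N\ge 5$, which works equally well since $|w|^2\in L^2$ when $N=4$), and the paper dispatches (ii) with a terse ``Fubini's theorem and evenness'' whereas you spell out the density argument from $C_0^\infty$ --- a welcome bit of extra care, since Fubini is not literally applicable when $W$ is merely a tempered distribution.
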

\begin{proof}
Let $u=\phi+w$, with $w\in H^1(\R^N)$.
If $N\geq 4$, by \eqref{phi} and the Sobolev embedding theorem, we deduce that
\bqq (1-\abs{\phi}^2-2\llave{\phi, w}-\abs{w}^2)\in L^2(\R^N)+L^\frac{N}{N-2}(\R^N).\eqq
By Lemma~\ref{last} we have that the map
$h\mapsto W*h$ is continuous from $L^2(\R^N)+L^\frac{N}{N-2}(\R^N)$ to $L^2(\R^N)\cap L^\frac{N}{2}(\R^N)$
and since $\frac{N-2}{N}+\frac{2}{N}=1$, by H\"older inequality we conclude that
\bq\label{dem-w-2} (W*(1-\abs{\phi}^2-2\llave{\phi, w}-\abs{w}^2))(1-\abs{\phi}^2-2\llave{\phi, w}-\abs{w}^2) \in L^1(\R^N).\eq
If $1\leq N\leq 3$, \eqref{dem-w-2} follows from the fact that $\abs{w}^2\in L^2(\R^N)$. This concludes the proof of (i).

A similar argument shows that $\LL{(W*\llave{u,h})(1-\abs{u}^2)}{1}<\infty$. Then
using that $W$ is even and  Fubini's theorem we obtain (ii).
\end{proof}

The previous lemmas will be useful in the next sections, in particular to prove
the local well-posedness of \eqref{NGP}. Now we give the
proofs of Proposition~\ref{regularidad} and Corollary~\ref{cor},
that involve some straightforward computations.

\begin{proof}[Proof of Proposition~\ref{regularidad}]
For the first part of (i), we note that the hypothesis
implies that $W\in \M_{p,p}(\R^N)$ for any $\frac32\leq p\leq 3$. Then it is sufficient  to take
$p_1=q_1=\frac{3}{2}$, $p_2=p_3=q_2=q_3=3$ 
%  \begin{graybox}\noindent
and $p_4=q_4=2$ to see that \eqref{Wn} is fulfilled.
For the second part of (i), we need prove that $W\in \M_{3,3}(\R^N)$. 
Recalling that $\M_{p,q}(\R^N)=\M_{q',p'}(\R^N)$ for $1<p\leq q<\infty$
and using the Riesz interpolation
theorem, we have that $W\in \M_{s,t}(\R^N)$, for every $(s^{-1},t^{-1})$ in the convex hull of
\bq\label{convex-set}
\left\{\left(\frac12,\frac12\right)\right\}\cup 
\bigcup_{j=1}^3 \left\{\left(\frac1{p_j},\frac1{q_j}\right),\left(1-\frac1{q_j},1-\frac1{p_j}\right)  \right\}.
\eq
By hypothesis, $p_i=q_i$, $i=1,2,3$, thus \eqref{Wn} implies that
\bqq%\label{restric}
{\frac{1}{p_2}+\frac{1}{p_3}=\frac{1}{p_1}, \quad 2\geq p_1 \textup{ and } \ p_2,p_3\geq 2.}
\eqq
Hence the convex hull of \eqref{convex-set} simplifies to
\bqq
{\left\{(x,x) \in \R^2 : \min\left\{1-\frac1{p_1},\frac1{p_2},\frac1{p_1}-\frac1{p_2}\right\}\leq x\leq 
\max\left\{\frac1{p_1},1-\frac1{p_2},1-\frac1{p_1}+\frac1{p_2}\right\}
\right\}.}
\eqq
Arguing by contradiction, it is simple to see that
\bqq{\min\left\{1-\frac1{p_1},\frac1{p_2},\frac1{p_1}-\frac1{p_2}\right\}\leq \frac 13 \quad \textrm{and}\quad 
\frac 23\leq \max\left\{\frac1{p_1},1-\frac1{p_2},1-\frac1{p_1}+\frac1{p_2}\right\}.}\eqq
{Therefore} $W\in \M_{s,s}(\R^N)$, for every $\frac32 \leq s \leq 3$. In particular $W\in \M_{2,2}(\R^N)\cap \M_{3,3}(\R^N)$.
%  \end{graybox}

% Recalling that $\M_{p,q}(\R^N)=\M_{q',p'}(\R^N)$ for $1<p\leq q<\infty$, and
% $\M_{1,1}(\R^N)\subseteq \M_{p,p}(\R^N) \subseteq \M_{2,2}(\R^N)$
% for $1\leq  p\leq 2$, {by the Riesz-Thorin interpolation theorem} we see that the best possible choice of $(p_1,p_2,p_3)$
% is found by  minimizing
% \bqq
% \left(\frac1{p_1} -\frac12\right)^2+\left(\frac1{p_2} -\frac12\right)^2+\left(\frac{1}{p_3} -\frac12\right)^2,
% \eqq
% as a function of  $\left(\frac1{p_1},\frac1{p_2},\frac1{p_3}\right)$, subject to the restriction \eqref{restric}. We can check
% that the minimum is attained at the point $p_1=\frac{3}{2}$, $p_2=p_3=3$.

To prove (ii), we notice that by
interpolation we have that
$W\in \M_{\alpha,\beta}(\R^N)$, for all
$\alpha,\beta$ satisfying
\begin{equation}\label{dem-alpha}
1\leq \alpha,\beta, \quad \frac{1}{\alpha} -\left(1-\frac{1}{\bar r}\right)\leq \frac{1}{\beta}\leq \frac{1}{\alpha}.
\end{equation}
We now define
\begin{align*}
%%%%%%%%%%%%%%%%%%%%%%%%%%%%%
p_2=p_3=&\begin{cases}
3, & \textrm{ if } 4\leq N\leq 5,\\
\frac{ s_N}{ s_N-1}, & \textrm{ if } 6 \leq N,
\end{cases}
&\phantom{A}q_2=q_3=\begin{cases}
{3}, & \textrm{ if } 4\leq N\leq 5,\phantom{ooi}\\
N, & \textrm{ if } 6 \leq N,
\end{cases}\\
p_1=&\begin{cases}
\frac{3}{2}, & \textrm{ if } 4\leq N\leq 5,\\
\frac{N}{N-1}, & \textrm{ if } 6 \leq N,
\end{cases}
&\phantom{A}q_1=\begin{cases}
\frac{3}{2}, & \textrm{ if } 4\leq N\leq 5,\\
\frac{p_3q_2}{p_3+q_2}, & \textrm{ if } 6 \leq N,
\end{cases}
\end{align*}
$p_4=\frac{2\bar r}{2\bar r-1}$, $q_4=2\bar r$,
where $$s_N=\begin{cases}
\dfrac{N}{4}+\ve_N, & \textrm{ if } 6\leq N\leq 7,\\
\dfrac{2(N+1)}{N+2}, &  \textrm{ if } 8\leq N,
\end{cases}$$
and $\ve_N>0$ is chosen small enough such that $0<\ve_N<2-\frac{N}{4}$ if $6 \leq N\leq 7$.
Then we have that
\begin{equation}\label{range}
\frac{2N}{N+2}<s_N<2, \quad \textrm{ for any } N\geq 6.
\end{equation}
Using that $\bar r>\frac{N}{4}$ and \eqref{range}, we can verify that the choice of $(p_i,q_i)$, $i\in\{1,\dots,4\}$,
satisfies \eqref{dem-alpha} with $\alpha=p_i$ and $\beta=q_i$,
as well as all the others restrictions in the hypothesis \eqref{Wn}, which completes the proof.
\end{proof}

\begin{proof}[Proof of Corollary~\ref{cor}]
By Young inequality we have that
$W\in\M_{p,p}(\R^N)$, for any $1\leq p\leq \infty$.
In particular the condition $W\in\M_{1,1}(\R^N)$ is fulfilled.
If $1\leq N\leq 3$, the conclusion is a consequence of
Proposition~\ref{regularidad} and Theorem~\ref{global}.
If $N\geq 4$, by
Young inequality we have that
$W\in \M_{p,q}(\R^N)$, for all $1-\frac{1}{r}\leq \frac{1}p\leq 1$,
with $\frac{1}{q}=\frac1p+\frac1{r}-1$.
Then the proof follows again from
Proposition~\ref{regularidad} and Theorem~\ref{global}.
\end{proof}

\section{Local existence}\label{sec-local}
In order to prove Theorem~\ref{global} we
first are going to prove the local well-posedness. Theorem~\ref{teo-local} is based on the fact that if we set $u=w+\phi$, then $u$ is a solution of \eqref{NGP}
with initial condition $u_0=\phi+w_0$
if and only if $w$ solves
\begin{ecu} \label{GP1}
i \ptl _t  w+\Delta w+f(w)&=0 \textrm{ on } \R^N\times \R,\\
w(0)&=w_0,
\end{ecu}
with
\bq
f(w)=\Delta\phi+(w+\phi)(W*(1-\abs{\phi+w}^2)). \nonumber
\eq
We decompose $f$ as
\bq\label{f}
f(w)=g_1(w)+g_2(w)+g_3(w)+g_4(w),
\eq
with
\begin{align*}
g_1(w)&=\Delta \phi+(W*(1-\abs\phi ^2))\phi,\\
g_2(w)&=-2(W* \llave{\phi, w})\phi,\\
g_3(w)&=-(W*\abs w^2)\phi-2(W* \llave{\phi, w})w+(W*(1-\abs \phi^2))w,  \\
g_4(w)&=-(W*\abs w^2)w.
\end{align*}
The next lemma gives some estimates on each of these functions.
\begin{lema}\label{cotas-g}
{Assume that} $W$ {satisfies \eqref{Wn}}. Using the numbers
given by \eqref{Wn} and Lemma~\ref{young1}, let $r_1=r_2=2$, 
$r_3=p_3$, $r_4=\tilde r$, $\rho_1=\rho_2=2$, $\rho_3=q_1'$ and 
$\rho_4=\tilde \gamma'.$ Then
% $2\leq r_3,r_4,\rho_3,\rho_4<\frac{2N}{N-2}$ if $N\geq 3$, and $2\leq r_3,r_4,\rho_3,\rho_4<\infty$ if $N=1,2$,
% such that
\begin{equation}
g_j\in C(H^1(\R^N),H^{-1}(\R^N)), \ j\in\{1,2,3,4\}.
\label{g-cont}
\end{equation}
Furthermore, for any $M>0$ there exists a constant $C(M,W,\phi)$ such that
\bq \label{lip}
\LL{g_j(w_1)-g_j(w_2)}{\rho_j'}\leq C(M,W,\phi)\LL{w_1-w_2}{{r_j\vphantom{\rho_j'}}},
\eq
for all $w_1,w_2\in H^1(\R^N)$ with $\norm{w_1}_{H^1},\norm{w_2}_{H^1}\leq M$,
and
\begin{align}
\norm{g_j(w)}_{W^{1,\rho_j'}}&\leq C(M,W,\phi)(1+\norm{w}_{W^{1,r_j}\vphantom{W^{1,\rho_j'}}}), \label{g-grad}
\end{align}
for all $w\in H^1(\R^N)\cap {W^{1,r_j}(\R^N)}$ with $\norm{w}_{H^1}\leq M$.
\end{lema}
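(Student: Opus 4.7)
The strategy is to treat each $g_j$ separately, since they are of increasing nonlinearity in $w$, exploiting \eqref{phi} (especially $\phi \in W^{1,\infty}(\R^N)$, $\nabla \phi \in H^2(\R^N) \hookrightarrow L^\infty(\R^N)$, and $1-\abs{\phi}^2 \in L^2(\R^N)$) together with the mapping properties of $W$ encoded in \eqref{Wn}. The exponents $(\rho_j, r_j)$ are then dictated by the structure of each term.

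\textbf{Easy cases.} Since $g_1$ does not depend on $w$, \eqref{lip} is trivial and \eqref{g-grad} reduces to checking that $g_1 \in H^1(\R^N)$: from \eqref{phi} we have $\Delta \phi \in H^1$, while $1-\abs{\phi}^2 \in L^2$ and $W \in \M_{2,2}$ yield $W*(1-\abs{\phi}^2) \in L^2$, and multiplying by $\phi \in L^\infty$ preserves $L^2$; Leibniz with $\nabla \phi \in L^\infty$ handles the gradient. For $g_2$, linear in $w$, the same chain gives $\LL{W*(\phi\cdot w)}{2} \leq C(W,\phi)\LL{w}{2}$ and, after a final multiplication by $\phi \in L^\infty$, proves \eqref{lip} and \eqref{g-grad} with $\rho_2 = r_2 = 2$; the derivative is distributed between the outer $\phi$ and the argument of the convolution, again using $W \in \M_{2,2}$.

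\textbf{Nonlinear cases.} The terms $g_3$ and $g_4$ call on the full strength of \eqref{Wn}. For the cubic $g_4(w) = -(W*\abs{w}^2)w$, Lemma~\ref{young1} furnishes exponents $\tilde \gamma \in (2N/(N+2), 2]$ and $\tilde r, \tilde s \in [2, 2N/(N-2))$ with $\LL{(W*\abs{w}^2)w}{\tilde\gamma} \leq C(W)\LL{w}{\tilde s}\LL{w}{\tilde r}^2$. Setting $\rho_4 = \tilde\gamma'$ and $r_4 = \max\{\tilde r, \tilde s\}$, the Lipschitz bound \eqref{lip} follows from the identity
\[
g_4(w_1) - g_4(w_2) = -\bigl(W*((w_1-w_2)\cdot(w_1+w_2))\bigr)w_1 - (W*\abs{w_2}^2)(w_1-w_2),
\]
combined with the Sobolev embedding $H^1 \hookrightarrow L^{r_4}$ and the uniform bound $M$; the estimate \eqref{g-grad} comes from distributing $\nabla$ across the convolution and the product. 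For $g_3$, each of its three summands has the form $(W*A)B$ with $A, B$ linear in $\phi$ or $w$; an application of Lemma~\ref{young2} with $\rho_3' = q_1$ and a suitable $r_3 \in [2, 2N/(N-2))$, combined with $\phi \in L^\infty$ and $1-\abs{\phi}^2 \in L^2$, delivers \eqref{lip} and \eqref{g-grad}.

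\textbf{Main obstacle.} The principal source of delicacy is the bookkeeping of Lebesgue exponents: one must choose a single pair $(\rho_j, r_j)$ that simultaneously fits the Lipschitz estimate, the gradient estimate, and the embedding $L^{\rho_j'} \hookrightarrow H^{-1}$ (which demands $\rho_j' \in [2N/(N+2), 2]$ when $N \geq 3$), matching exactly the range prescribed for $q_1$ in \eqref{Wn}. The conditions in \eqref{Wn} are engineered precisely to accommodate this, but tracking them through the three summands of $g_3$ requires careful verification. The continuity \eqref{g-cont} then follows from the Lipschitz bounds combined with the Sobolev embeddings.
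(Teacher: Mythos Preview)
Your approach is the same as the paper's: handle $g_1,g_2$ with $W\in\M_{2,2}$ and the properties of $\phi$ in \eqref{phi}, invoke Lemma~\ref{young2} for $g_3$ with the choice $(\rho_3,r_3)=(q_1',p_3)$, and Lemma~\ref{young1} for $g_4$, then deduce \eqref{g-cont} from \eqref{lip} and Sobolev embeddings. One small correction: for $g_4$ the paper takes $r_4=\tilde r$ rather than $\max\{\tilde r,\tilde s\}$, because in the term $(W*\abs{w_2}^2)(w_1-w_2)$ the difference sits in the outer slot of Lemma~\ref{young1}, which is $L^{\tilde r}$; the $L^{\tilde s}$ norms of the remaining factors are then controlled by $M$ via Sobolev, and the same choice works in \eqref{g-grad} since $\nabla w$ appears in an $L^{\tilde r}$ slot as well.
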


\begin{proof}
Since $g_1$ is a constant function of $w$, $g_1\in C(H^1(\R^N),H^{-1}(\R^N))$
and \eqref{lip} is trivial in this case.  The condition \eqref{g-grad} follows from
the estimate
\begin{align*}
\norm{g_1(w)}_{H^1}\leq& \norm{\nabla \phi}_{H^2}+\norm{W}_{2,2}
( \LL{1-\abs \phi^2}{2}\norm{\phi}_{W^{1,\infty}}+2\LL{\phi}{\infty}^2\LL{\nabla \phi}{2}).
\end{align*}
%with $\norm{\phi}_{C^1}=\norm{\phi}_{L^\infty}+\norm{\grad\phi}_{L^\infty}$.
Similarly we obtain for $g_2$,
$$\LL{g_2(w_1)-g_2(w_2)}{2}\leq 2\norm{W}_{2,2}\LL{\phi}{\infty}^2 \LL{w_1-w_2}{2}$$
and
\begin{align*}
\LL{\grad g_2(w)}{2}&\leq 2\norm{W}_{2,2}\LL{\phi}{\infty}
\big(
\LL{\phi}{\infty} \LL{\grad w}{2}+2\LL{\grad\phi}{\infty} \LL{w}{2} \big)\\
&\leq C(W,\phi) \HH{w}.
\end{align*}
Then we deduce \eqref{lip} and \eqref{g-grad} for $j=2$.

For $g_3$, we have
\begin{multline}%\label{g3}
g_3(w_2)-g_3(w_1)=(W*(\abs{w_1}^2-\abs{w_2}^2))\phi+2(W*\llave{\phi,w_1-w_2 })w_1\nonumber\\
+2(W*\llave{\phi, w_2})(w_1-w_2)+(W*(1-\abs{\phi}^2))(w_1-w_2).
\end{multline}
% By choosing $(r_3,\rho_3)=(p_3,q_1')$,
{The assumption \eqref{Wn} allows to apply Lemma~\ref{young2} and then
we derive}
%\begin{ecu0}
\begin{equation}\label{eq:g3-lip}
\begin{split}
\LL{g_3(w_2)-g_3(w_1)}{\rho_3'}\leq  C(W,\phi)  \LL{{w_1}-{w_2}}{r_3}(\LL{w_1}{s_1}+\LL{w_2}{s_1} \\
{+2\LL{w_1}{s_2}+2\LL{w_2}{p_2}+1).}
\end{split}
\end{equation}
%\end{ecu0}
More precisely, the dependence on $\phi$ of the constant  $C(W,\phi)$ in the last inequality %of \eqref{eq:g3-lip}
is given explicitly by ${\max\{\LL{\phi}{\infty},\LL{1-\abs{\phi}^2}{p_2}\}}$. By {the} Sobolev embedding theorem
\begin{equation}\label{inyec}
H^1(\R^N)\hookrightarrow L^p(\R^N), \quad \forall\,p\in \left[2,\frac{2N}{N-2}\right] \text{ if }N\geq 3  \text{ and } \forall\,p\in [2,\infty) \textrm{ if }N=1,2.
\end{equation}
In particular, $${\LL{w_1}{s_1}+\LL{w_2}{s_1}+ 2\LL{w_1}{s_2}+2\LL{w_2}{p_2}\leq C  (\HH{w_1}+ \HH{w_2}),}$$
which together with \eqref{eq:g3-lip} gives us \eqref{lip}
for $g_3$. With the same type of computations, taking $w\in H^1(\R^N)$, $\HH{w}\leq M$, we have
\begin{align*}
\LL{\grad g_3(w)}{\rho_3'}\leq&C(M,W,\phi)( \LL{\grad w}{r_3}+\LL{w}{r_3}),
\end{align*}
where the dependence on $\phi$ is in terms of $\LL{\phi}{\infty}$, $\LL{\grad \phi}{\infty}$,
$\LL{1-\abs{\phi}^2}{p_2}$ and $\LL{\grad \phi}{p_2}$.

For $g_4$, {applying  Lemma~\ref{young1} we obtain}
\bqq\begin{split}
 \LL{g_4(w_1)-g_4(w_2)}{\rho_4'}\leq C(W) \LL{w_1-w_2}{r_4}( (\LL{w_1}{s}+\LL{w_2}{s})\LL{w_1}{r_4}\\
+\LL{w_2}{s}\LL{w_2}{r_4})
\end{split}
\eqq
and
\begin{align*}%\label{grad-g4}
\LL{\grad g_4(w)}{\rho_4'}\leq& C(W)  \LL{\grad w}{r_4} \LL{w}{r_4} \LL{w}{s}.
\end{align*}
As before, using \eqref{inyec}, we conclude that $g_4$ verifies \eqref{lip}-\eqref{g-grad}.

Since for $2\leq j\leq 4$, $2\leq r_j<\frac{2N}{N-2}$ ($2\leq r_j<\infty$ if $N=1,2$), we have the continuous embeddings
$$H^1(\R^N)\hookrightarrow L^{r_j}(\R^N) \  \textrm{  and  } \ L^{r'_j}(\R^N)\hookrightarrow H^{-1}(\R^N).$$
Then inequality \eqref{lip} implies \eqref{g-cont}, for $j\in\{2,3,4\}$.
\end{proof}

Now we analyze the potential energy associated to \eqref{GP1}. For any $v\in H^1(\R^N)$
we set
\begin{equation}
F(v):=\int_{\R^N}\llave{ \Delta\phi, v}\,dx -\frac14 \int_{\R^N}(W*(1-\abs{\phi+v}^2))(1-\abs{\phi+v}^2)\,dx ,
\label{F}
\end{equation}
and using the notation of Lemma~\ref{cotas-g}, we fix for the rest of this section
\bq\label{r}
r=\max\{r_1,r_2,r_3,r_4,\rho_1,\rho_2,\rho_3,\rho_4 \}.
\eq
\begin{lema}\label{lema-dif-F} Assume that $W$ satisfies \eqref{Wn}.
Then the functional $F$ is well-defined on $H^1(\R^N)$.
If {moreover} $W$ is a real-valued even distribution, we have the following properties.
\begin{enumerate}
\item[$(i)$] $F$ is Fr\'echet-differentiable  and  \begin{equation}
F\in C^1(H^1(\R^N),\R) \text{  with  } F'=f. \label{der-F}
\end{equation}
\item[$(ii)$] For any $M>0$, there exists a constant $C(M,W,\phi)$ such that
\bq\label{diff-F}
\abs{F(u)-F(v)}\leq C(M,W,\phi)(\LL{u-v}{2}+\LL{u-v}{r}),
\eq
for any $u,v\in H^1(\R^N)$, with $\HH{u},\HH{v}\leq M$.
\end{enumerate}
\end{lema}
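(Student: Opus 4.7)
The plan is to handle the well-definedness claim first, which requires no evenness of $W$: the linear piece $\int \Delta\phi\cdot v\,dx$ is finite since $\nabla\phi\in H^2(\R^N)$ forces $\Delta\phi\in L^2(\R^N)$, while $v\in H^1\hookrightarrow L^2$; the quartic piece is in $L^1(\R^N)$ by Lemma~\ref{lema-int-finita}(i) applied to $u=\phi+v\in\phi+H^1(\R^N)$.

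For the Fréchet-differentiability in (i), for $v,h\in H^1(\R^N)$ I would set $A_v=1-\abs{\phi+v}^2$ and $B_h=-2(\phi+v)\cdot h-\abs{h}^2$, so $1-\abs{\phi+v+h}^2=A_v+B_h$, and expand
$$(W*(A_v+B_h))(A_v+B_h)-(W*A_v)A_v=(W*A_v)B_h+(W*B_h)A_v+(W*B_h)B_h.$$
Since $W$ is real-valued and even, Lemma~\ref{lema-int-finita}(ii) (together with Fubini on the purely $\abs{h}^2$ pieces, whose integrability is secured by \eqref{Wn} and Sobolev embedding) yields $\int(W*A_v)B_h\,dx=\int(W*B_h)A_v\,dx$. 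Isolating the part linear in $h$ gives
$$F(v+h)-F(v)=\int_{\R^N}\Delta\phi\cdot h\,dx+\int_{\R^N}(W*A_v)(\phi+v)\cdot h\,dx+R(v,h),$$
and the linear contribution is precisely $\langle f(v),h\rangle$ by the definition of $f$. To show $\abs{R(v,h)}=o(\norm{h}_{H^1})$ I would estimate the remaining at-most-quartic terms by Lemmas~\ref{young2} and~\ref{young1} exactly as in the treatment of $g_2, g_3, g_4$ in Lemma~\ref{cotas-g}, obtaining $\abs{R(v,h)}\leq C(\norm{v}_{H^1},W,\phi)\norm{h}_{H^1}^2$. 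Continuity of $F'=f$ as a map $H^1\to H^{-1}$ then follows at once from \eqref{g-cont}.

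For (ii), once $F\in C^1$ with $F'=f$, the mean value theorem gives
$$F(u)-F(v)=\int_0^1\bra{f(w_t),\,u-v}\,dt,\qquad w_t=v+t(u-v),$$
where $\norm{w_t}_{H^1}\leq M$ for all $t\in[0,1]$. Decomposing $f=\sum_{j=1}^{4}g_j$ and applying Hölder,
$$\abs{\bra{g_j(w_t),u-v}}\leq \norm{g_j(w_t)}_{\rho_j'}\,\norm{u-v}_{\rho_j}.$$
Applying \eqref{lip} with $w_2=0$ (noting that $g_j(0)$ is either zero for $j\in\{2,3,4\}$ or bounded in $L^2$ by a constant depending only on $W,\phi$ for $j=1$) and using Sobolev to dominate $\norm{w_t}_{r_j}$ by $\norm{w_t}_{H^1}\leq M$, I bound $\norm{g_j(w_t)}_{\rho_j'}\leq C(M,W,\phi)$. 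Finally, since every $\rho_j\in[2,r]$, Hölder interpolation gives $\norm{u-v}_{\rho_j}\leq \norm{u-v}_{2}+\norm{u-v}_{r}$, which produces \eqref{diff-F}.

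The step I expect to be the main obstacle is controlling the quadratic remainder $R(v,h)$: it contains four genuinely distinct pieces, namely $(W*A_v)\abs{h}^2$, $(W*((\phi+v)\cdot h))((\phi+v)\cdot h)$, $(W*\abs{h}^2)(\phi+v)\cdot h$ and $(W*\abs{h}^2)\abs{h}^2$. Each requires a different component of the hypothesis \eqref{Wn}: the two terms linear or bilinear in $h$ are handled through $\M_{2,2}$ together with $\phi\in W^{1,\infty}$, the cubic term through Lemma~\ref{young2} with the exponents $(p_3,q_1,s_1,s_2)$, and the quartic term through the more delicate Lemma~\ref{young1}; assembling these bounds so that the constant depends only on $\norm{v}_{H^1}$, $W$ and $\phi$ (and not on the precise decomposition of $u_0$ into $\phi+w_0$) is the calculational heart of the proof.
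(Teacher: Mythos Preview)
Your proposal is correct; well-definedness and part (ii) match the paper's argument essentially line by line (mean-value theorem with $F'=f$, then Lemma~\ref{cotas-g} plus $L^p$-interpolation between $L^2$ and $L^r$). The only genuine difference is in part (i).

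You establish Fr\'echet-differentiability by expanding $F(v+h)-F(v)$ explicitly, isolating the linear part $\langle f(v),h\rangle$, and then bounding the remainder $R(v,h)$ term by term with Lemmas~\ref{young2} and~\ref{young1} to get $|R(v,h)|\le C(\|v\|_{H^1},W,\phi)\|h\|_{H^1}^2$. The paper takes a shorter route: it computes only the G\^ateaux derivative
\[
d_GF(v)[h]=\lim_{t\to 0}\frac{F(v+th)-F(v)}{t}=\langle f(v),h\rangle_{H^{-1},H^1},
\]
using \eqref{dem-3} to merge the two cross terms, and then invokes the standard fact that a G\^ateaux-differentiable functional whose derivative $v\mapsto d_GF(v)$ is continuous (here from $H^1$ to $H^{-1}$) is automatically continuously Fr\'echet-differentiable. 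Since $f\in C(H^1,H^{-1})$ is already available from \eqref{g-cont}, this bypasses the remainder estimate entirely. Your approach is more elementary and produces an explicit quadratic bound on the remainder, at the cost of the four-term bookkeeping you correctly identify as the main obstacle; the paper's approach trades that computation for a one-line appeal to the G\^ateaux-to-Fr\'echet lemma.
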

\begin{proof}

By Lemma \ref{lema-int-finita}, $F$ is well-defined in $H^1(\R^N)$ for any $N$.
To prove (i), we compute now the {G\^ateaux} derivative of $F$. For $h\in H^1(\R^N)$ we have
\begin{align*}
d_{G}F(v)[h]=&\lim_{t\to 0}\frac{F(v+th)-F(v)}{t}\\
=&\int_{\R^N} \llave{\Delta \phi, h}\,dx +\frac12\int_{\R^N}(W* \llave{\phi+v, h})(1-\abs{\phi+v}^2)\,dx\\
&+\frac12\int_{\R^N}(W*(1-\abs{\phi+v}^2)) \llave{\phi+v, h}\,dx.
\end{align*}
Since $W$ is an even distribution,  \eqref{dem-3}
implies that the last two integrals  are equal. Finally we get that
$$d_{G}F(v)[h]=\int_{\R^N}\llave{f(v), h}\,dx=\left\langle f(v),h\right\rangle_{H^{-1},H^{1}}.$$
From \eqref{f} and \eqref{g-cont}, we have that $f\in C(H^1(\R^N),H^{-1}(\R^N))$.
Hence the map $v\to d_{G}F(v)$ is continuous from $H^1(\R^N)$ to $H^{-1}(\R^N)$, which implies that $F$ is continuously Fr\'echet-differentiable
and satisfies \eqref{der-F}.

For the proof of (ii), using \eqref{der-F} and the mean-value theorem, we have
\begin{align*}
F(u)-F(v)&=\int_0^1 \frac{d}{ds}F(su+(1-s)v)\,ds =\int_0^1 \langle f(su+(1-s)v),u-v \rangle_{H^{-1},H^{1}}\,ds.
\end{align*}
Then by Lemma~\ref{cotas-g},
\begin{ecu0}\label{dem-dif}
\abs{F(u)-F(v)}&\leq\ \sup_{s\in[0,1]} \sum_{j=1}^4 \LL{g_j(su+(1-s)v)}{\rho_j'} \LL{u-v}{\rho_j}\\
&\leq \sum_{j=1}^4 C(M,W,\phi)(\LL{u}{r_j}+\LL{v}{r_j}+1)\LL{u-v}{\rho_j}.
\end{ecu0}
Since we assume that  $\HH{u},\HH{v}\leq M$, \eqref{inyec} implies that
\bq\LL{u}{r_j}+\LL{v}{r_j}+1\leq C(M).\label{dem-diff2}\eq
Also,  it follows from $L^p$-interpolation {and Young's inequality} that
\bq\label{inter}
\LL{u-v}{\rho_j}\leq \LL{u-v}{2}^{\theta_j}\LL{u-v}{r}^{1-\theta_j}\leq \LL{u-v}{2}+\LL{u-v}{r},
\eq
with $\theta_j=\frac{2(r-\rho_j)}{\rho_j(r-2)}.$
By combining \eqref{dem-dif}, \eqref{dem-diff2} and \eqref{inter}, we obtain (ii).
\end{proof}

\begin{proof}[Proof of Theorem~\ref{teo-local}]
Recalling that $r$ was fixed in \eqref{r}, we define $q$ by
$\frac{1}{q}=\frac{N}{2}\left(\frac{1}2-\frac1r\right)$. %, where $r$ is given by \eqref{r}.
Given $T,M>0$, we consider the complete metric space
\bqq\begin{split}
X_{T,M}=\{w\in L^\infty((-T,T),H^1(\R^N))\cap  L^q((-T,T),W^{1,r}(\R^N)):  \\
\norm{w}_{L^\infty((-T,T),H^1)}\leq M, \ \norm{w}_{L^q((-T,T),{W^{1,r}})}\leq M \},
\end{split}\eqq
endowed with the distance
\bq\label{dist} d_{T}(w_1,w_2)=\norm{w_1-w_2}_{L^\infty((-T,T),L^2)}+\norm{w_1-w_2}_{L^q((-T,T),L^r)}.\eq
The estimates given in Lemmas~\ref{cotas-g},~\ref{lema-dif-F} and the Strichartz estimates
show that the functional
\bqq
\Phi(w)=e^{it\Delta}w_0+i\int_{0}^te^{i(t-s)\Delta}f(w(s))\,ds
\eqq
is a contraction in $X_{T,M}$ for some
$M\leq C(\HH{w_0}+1)$ and $T$ small enough, but depending
only on $\HH{w_0}$. Then we have a solution given by Banach's fixed-point theorem.
The arguments  to complete Theorem~\ref{teo-local}
 are rather standard. For instance, Theorem 4.4.6 in \cite{cazenave}
 automatically implies the existence, uniqueness, the blow-up alternative and that the function
$L(t)$ given by
\begin{equation*}
L(t):=L_1(t)+\frac14 \int_{\R^N}(W*(1-\abs{\phi+w(t)}^2))(1-\abs{\phi+w(t)}^2)\,dx ,
%\label{dem-E}
\end{equation*}
with $$L_1(t)=\frac12 \int_{\R^N}\abs{\grad{w(t)}}^2\,dx-\int_{\R^N} \llave{\Delta\phi ,w(t)}\,dx,$$
is constant for all $t\in(-T_{\min},T_{\max})$. Noticing that
$$L_1(t)=\frac12 \int_{\R^N}\abs{\grad{w(t)}+\grad\phi}^2\,dx -\frac12 \int_{\R^N}\abs{\grad{\phi}}^2\,dx,$$
we conclude that the energy is conserved.%, since $L$ is constant.

%  \begin{graybox}\noindent
However, the continuous dependence on the initial data in $H^1(\R^N)$
is not obvious, because the distance \eqref{dist} does not
involve derivatives. Therefore we give the complete proof of this point. Here we will omit the dependence on $W$ and $\phi$ {in the generic constant} $C$, since it plays no role
in the analysis of continuous dependence. Let $w_{0,n},w_0\in H^1(\R^N)$ be such that
$$w_{0,n}\to w_0 \quad \textrm{ in } H^1(\R^N).$$
  Then for some $n_0\geq 0$,
 \bqq\label{wn}\HH{w_{0,n}}\leq \HH{w_0}+1,\quad \forall n\geq n_0.\eqq
We denote $w_n$ and $w$ the solutions with initial data
$w_{0,n}$ and $w_0$, respectively. Then by the fixed-point argument, 
there exist $T>0$ and a constant $C(\HH{w_0})$, both depending only on $\HH{w_0}$, such that $w_n$ and $w$ are defined in $[-T,T]$
for all $n\geq n_0$ and  
\bq\label{cota-pedida}
\norm{w_n}_{L^\infty((-T,T),H^1)}+\norm{w}_{L^\infty((-T,T),H^1)} \leq C(\HH{w_0}), \quad \forall n\geq n_0.
 \eq
% \bq\label{unif}
% d_T(w_n,w)\leq 2C\HH{w_0}, \quad \forall n\geq n_0.
% \eq
{Since}
\bqq
w_n(t)-w(t)=e^{it\Delta}(w_{0,n}-w_0)+i\int_{0}^t e^{i(t-s)\Delta}(f(w_n(s))-f(w(s)))\,ds,
\eqq
using Strichartz estimates we have that
\bq\label{stri}
d_T(w_n,w)\leq C\LL{w_{0,n}-w_0}{2}+C\sum_{j=1}^4 \norm{g_j(w_n)-g_j(w)}_{L^{\gamma_j'}{((-T,T),L^{\rho_j'})}},
\eq
with $\frac{1}{\gamma_j}=\frac{N}{2}\left( \frac12-\frac1{\rho_j} \right)$.
{By Lemma~\ref{cotas-g}, \eqref{cota-pedida}, using as in \eqref{inter} an} $L^p$-interpolation inequality and Young's inequality,  we deduce that
\bq\label{dem-lip}
\norm{g_j(w_n)-g_j(w)}_{\rho_j'} \leq C(\norm{w_0}_{H^1})(\LL{w_n-w}{2}+\LL{w_n-w}{r}).
\eq
% Here we omit the dependence on $W$ and $\phi$ in $C$, since it plays no role
% in the analysis of continuous dependence.
 Applying H\"older inequality with ${\beta_j}=\frac{1}{\gamma_j'}-\frac{1}{q}$,
% \end{graybox}
% \begin{graybox}\noindent
\bq\label{hol}
\norm{w_n-w}_{L^{\gamma_j'}{((-T,T),L^{r})}}\leq \norm{w_n-w}_{L^{q}{((-T,T),L^{r})}}(2T)^{\beta_j}.
\eq
Notice that $0<\beta_j\leq 1$ since $2\leq \rho_j,r_j<\frac{2N}{N-2}$.
Assuming $T\leq 1$ and putting together \eqref{dem-lip} and \eqref{hol} we conclude that
\bq\label{dem-stri}
\norm{g_j(w_n)-g_j(w)}_{L^{\gamma_j'}{((-T,T),L^{\rho_j'})}}\leq C(\norm{w_0}_{H^1}) T^{{\beta}}d_T(w_n,w),
\eq
with ${\beta}=\min\{{\beta_j,{1/{\gamma_j'}}}:\ 1\leq j\leq 4\}$. Choosing $T$ such that $4T^\beta C(\norm{w_0}_{H^1})\leq\frac12$,
\eqref{stri} and  \eqref{dem-stri} give
\bqq
d_T(w_n,w)\leq 2C(\norm{w_0}_{H^1})\HH{{w_{0,n}}-w_0}.
\eqq
Hence
\bqq
w_n\to w, \textrm{  in  } C([-T,T],L^2(\R^N)) \cap L^q((-T,T),L^r(\R^N)).
\eqq
Thus from \eqref{cota-pedida} and the
Gagliardo-Nirenberg inequality, we conclude that $w_n\to w$  in  $C([-T,T],L^p(\R^N))$,
for every $2\leq p <\infty$ if $N=1,2$ and $2\leq p<\frac{2N}{N-2}$ if $N\geq 3$.
Using the inequality \eqref{diff-F} in Lemma~\ref{lema-dif-F}, 
it follows that $F(w_n)\to F(w)$ in ${C([-T,T])}$.
Since the energy is conserved for $w$ and $w_n$, {this implies that}
\bqq
\LL{\grad w_n}{2}\to \LL{\grad w}{2} \ \textrm{  in  }\ C([-T,T]).
\eqq
In addition, from the equation $i\partial_t w_n=-\Delta w_n-f(w_n)$ in $[-T,T]$, we get 
$$\norm{\partial_t w_n}_{H^{-1}}\leq \norm{w_n}_{H^1}+\sum_{j=1}^4\norm{g_j(w_n)}_{H^{-1}},$$
Hence Lemma~\ref{cotas-g} and \eqref{cota-pedida} provide a uniform bound
for $w_n$ in $C^1([-T,T],H^{-1}(\R^N))$. Therefore
$w_n \to w$ in $C([-T,T],H^1(\R^N))$ (see Proposition 1.3.14 in \cite{cazenave}).
A covering argument allows us to finish the proof in any {closed} bounded interval.

Since the generalized momentum still needs a precise definition, we will postpone
the proof of its conservation until Section~\ref{conservation}.
% \end{graybox}
\end{proof}\vspace{-0.2cm}
%%%%%%%%%%%%%%%%%%%%%%%%%%%%%%%%%%%%%%%%%%%%%%%%%%%%%%%%%%%%%%%%%%%%%%%%%%%%%%%%%%%%%%%%
We prove now Propositions~\ref{convergencia} and \ref{prop-H2} because
the arguments involved are very similar to those used in this section.
For these proofs we suppose that Theorem~\ref{global} is already proved.

\begin{proof}[Proof of Proposition \ref{convergencia}] Let $u_n=\phi+w_n$ and $u_\infty=\phi+w_\infty$,
{where} $w_n,w_\infty\in C(\R,H^1(\R^N))$, be the global solution of
\eqref{NGP} with potentials $W_n$ and $W_\infty$, respectively,
with {the same} initial data $ u_0=\phi+w_0$, {with} $w_0\in H^1(\R^N)$.
In the same spirit of the proof of Theorem~\ref{teo-local},
for $v\in H^1(\R^N)$, we set
\bqq
f_n(v)=g_{1,n}(v)+g_{2,n}(v)+g_{3,n}(v)+g_{4,n}(v),
\eqq
with
\begin{align*}
g_{1,n}(v)&=\Delta \phi+(W_n*(1-\abs\phi ^2))\phi,\\
g_{2,n}(v)&=-2(W_n* \llave{\phi, v})\phi,\\
g_{3,n}(v)&=-(W_n*\abs v^2)\phi-2(W_n* \llave{\phi, v})w+(W_n*(1-\abs \phi^2))v,  \\
g_{4,n}(v)&=-(W_n*\abs v^2)v,
\end{align*}
for any $n\in \N\cup \{\infty\}$. %, with $W_\infty\equiv \delta$.
Noticing that for any $v_1,v_2\in H^1(\R^N)$, $1\leq j\leq 4$,
$$g_{j,n}(v_1)-g_{j,m}(v_2)=\left(g_{j,n}(v_1)-g_{j,n}(v_2)\right)+\left(g_{j,n}(v_2)-g_{j,m}(v_2)\right),$$
{Proposition~\ref{regularidad}, Lemma~\ref{young2}, the proof of Lemma~\ref{young1}
and the same argument given in Lemma~\ref{cotas-g} allows} us to conclude that ({we omit from now on the dependence on }$\phi$)
\bq\label{dif-con}
\LL{g_{j,n}(v_1)-g_{j,m}(v_2)}{\rho_j'}\leq C(W_n,M)\LL{v_1-v_2}{r_j}+
C(W_n-W_m,M)(\LL{v_2}{r_j}{+1}),
\eq
for any $n,m\in \N\cup\{\infty\}$ and $v_1,v_2\in H^1(\R^N)$ with $\HH{v_1},\HH{v_2}\leq M$,
with (the new choice of) $\rho_j$, $r_j$ given by
\bq\label{rs} \rho_1=\rho_2=r_1=r_2=2, \ \rho_3=r_3=3, \  \rho_4=r_4=4, \eq
and \bq\label{def-sigma} {C(W,M)=\sigma(W)C(M)}, \textup{ with }\sigma(W)=\max \{\norm{W}_{2,2},\norm{W}_{3,3}\}.\eq
% Moreover, from  and the proofs of Lemma~\ref{young1} and
% we also deduce that the constants in \eqref{dif-con}
% are of the form
% \bq\label{def-sigma} C(W,M,\phi)=\sigma(W)C(M,\phi), \textup{ with }\sigma(W)=\max \{\norm{W}_{2,2},\norm{W}_{3,3}\}\eq
% and
% \bq\label{rs} \rho_1=\rho_2=r_1=r_2=2, \ \rho_3=r_3=3, \  \rho_4=r_4=4. \eq
% \begin{graybox}\noindent
By the uniqueness provided by Theorem~\ref{global}, the functions
$w_n$ are given by the fixed-point argument
of the proof of Theorem~\ref{teo-local}. Since the
 estimates for the fixed point can be  obtained
using Lemma~\ref{cotas-g}, but with the values in \eqref{rs},
and by \eqref{conver}
we may assume that for $k=2,3$
$${\frac12\norm{W_\infty}_{k,k}\leq \norm{W_n}_{k,k}\leq  2\norm{W_\infty}_{k,k},}$$
so that we have uniform bounds on $W_n$. Therefore we conclude that
there exist some $T\leq 1$ and $C>0$ that only depend on  $\HH{w_0}$, $\norm{W_\infty}_{2,2}$
and $\norm{W_\infty}_{3,3}$
such that
\bq\label{cota-w} \norm{w_n}_{L^\infty((-T,T),H^1)}\leq {C},\quad \textup{ for any }n\in \N\cup \{\infty\}.\eq
Using the distance
\bqq d_{T}(w_1,w_2)=\norm{w_1-w_2}_{L^\infty((-T,T),L^2)}+\norm{w_1-w_2}_{L^{\frac{8}{N}}((-T,T),L^{4})},\eqq
the estimates \eqref{dif-con}, \eqref{cota-w} and following the lines of the proof of Theorem~\ref{teo-local},
it leads {to}
\bqq d_T(w_n,w_\infty)\leq C \sigma(W_n-W_\infty).\eqq
Hence the hypothesis \eqref{conver} and \eqref{def-sigma} imply that
\bqq
w_n\to w_\infty \quad \textrm{  in  } C([-T,T],L^2(\R^N)) \cap L^\frac{8}{N}((-T,T),L^{4}(\R^N)).
\eqq
Then \eqref{cota-w} and the Gagliardo-Nirenberg inequality imply
that
\bq\label{con-w2}
w_n\to w_\infty\ \textrm{  in  }C([-T,T],L^p(\R^N)), \ \forall\, p\in [2,\infty)\textrm{ if } N=1,2 \textrm{ and } \forall\,p\in\left[2,\frac{2N}{N-2}\right)
\textrm{ if } N\geq 3.
\eq

We denote by $F_n$ the function given by \eqref{F}, with $W$ replaced by $W_n$,
so that the conserved energy for each $u_n$ is
\bq\label{energy} E_n(u_n(t))=\LL{\grad w_n(t)}{2}+F_n(w_n(t))=\LL{\grad w_0}{2}+F_n(w_0),\quad \textrm{for any }  t \in\R.\eq
The inequality \eqref{dif-con} and similar arguments as in the proof of Lemma~\ref{lema-dif-F} give for any $v_1,v_2\in H^1(\R^N)$ with ${\HH{v_1},\HH{v_2}\leq M}$, that
there exists a constant $C$ depending only on $M$, $\norm{W_\infty}_{2,2}$
and $\norm{W_\infty}_{3,3}$, such that
\bq\label{dif-F2} \abs{F_n(v_1)-F_m(v_2)}\leq C\left(\LL{v_1-v_2}{2}+\LL{v_1-v_2}{4}\right)+C\sigma(W_n-W_m).\eq
By putting together \eqref{cota-w}, \eqref{con-w2} and  \eqref{dif-F2}, we deduce that $F_n(w_n)\to F_\infty(w_\infty)$
in ${C([-T,T])}$. Then by \eqref{energy} we have that $\LL{\grad w_n}{2}\to \LL{\grad w_\infty}{2}$ in  $C([-T,T])$.
The conclusion follows as in the proof of Theorem~\ref{teo-local}.
% \end{graybox}
\end{proof}
%%%%%%%%%%%%%%%%%%%%%%%%%%%%%%%%%%%%%%%%%%%%%%%%%%%%%%%%%%%%%%%%%%%%%%%%%%%%%%%%%%%%%%%%%%%%%%%%%%%%%%%%%%%%%%%%
\begin{proof}[Proof of Proposition \ref{prop-H2}] Using the notation
introduced at the beginning of this section, by Lemma 5.3.1 in \cite{cazenave}, we only
need to prove that for any $1\leq j\leq 4$ and any $w\in H^{s}(\R^N)$ such that $\HH{w}\leq M,$ we have
\bq\label{g-H2} \LL{g_j(w)}{2}\leq C(W,M,\phi)\left(1+\norm{w}_{H^s}\right),\eq
for some $0<s<2$. From the estimate \eqref{g-grad} in Lemma~\ref{cotas-g}
and the Sobolev embedding theorem, we have the inequality \eqref{g-H2} for $j=1,2$
for any $s\geq 1.$  For $j=3,4$ we note that by the Sobolev embedding theorem,
\bqq
W^{1,p}(\R^N)\hookrightarrow L^2(\R^N), \ \forall p\in \left[\frac{2N}{N+2},2\right] \textrm{ if }N\geq 3
\ \textrm{and }\ \forall p\in[1,2] \textrm{ if }N=1,2\textrm,
\eqq
and 
for any 
$$r\in \left[2,\frac{2N}{N-2}\right], \textrm{ if }N\geq 3 \ \textrm{ and } \ r\in \left[2,\infty\right) \textrm{ if }N=1,2,$$
there exists $\frac32<s<2$ such that $H^s(\R^N)\hookrightarrow W^{1,r}(\R^N)$. 
Thus we have for $j=3,4$ that $W^{1,\rho_j'}(\R^N)\hookrightarrow L^2(\R^N)$
and $H^{s_j}(\R^N)\hookrightarrow W^{1,r_j}(\R^N)$, for some $s_j<2$. Setting
$s=\max\{s_3,s_4\}$, from the inequality \eqref{g-grad} we obtain estimate \eqref{g-H2}
\end{proof}
%%%%%%%%%%%%%%%%%%%%%%%%%%%%%%%%%%%%%%%%%%%%%%%%%%%%%%%%%%%%%%%%%%%%%%%%%%%%%%%%%%%%%%%%%%%%%%%%%%%
\section{Global existence}\label{sec-global}
In order to complete the proof of Theorem~\ref{global} we need
 to prove that the solutions given by Theorem~\ref{teo-local}
are global. We do this by establishing an appropriate estimate for $\LL{w(t)}2$.
We distinguish three subcases, associated to the different assumptions on $W$.

\begin{proof}[Proof of Theorem~\ref{global}-$(i)$-$(a)$]
We recall that by Theorem~\ref{teo-local}
we already have the conservation of energy
\bq\label{dem-global}
E_0=\frac12 \int_{\R^N}\abs{\grad w(t)+\grad \phi}^2\,dx  +\frac14 \int_{\R^N}(W*(\abs{\phi+w(t)}^2-1))(\abs{\phi+w(t)}^2-1)\,dx,
\eq
for any $t\in (-T_{\min},T_{\max})$. Since we are assuming that $W$
is a positive definite distribution, the potential energy, i.e. the second integral in
\eqref{dem-global}, is nonnegative. Hence
$$\frac12 \int_{\R^N}\abs{\grad w(t)+\grad \phi }^2 \,dx \leq E_0$$
and using the elementary inequality  \bq\label{ab}\int_{\R^N}\abs{\grad w\grad \phi}\,dx\leq \frac14 \LL{\grad w}{2}^2+ \LL{\grad\phi}{2}^2,\eq
we conclude that
\bq \label{grad}
\LL{\grad w(t)}{2}^2\leq 4E_0+2 \LL{\grad \phi}{2}^2,  \quad t\in (T_{\min},T_{\max}),
\eq
which gives a uniform bound for $\LL{\grad w(t)}{2}$.
Therefore we only need an appropriate bound  for $\LL{w(t)}{2}$  to conclude that
\begin{equation}\label{sup}
\sup\{\HH{w(t)} : t\in (-T_{\min},T_{\max}) \}<\infty.
\end{equation}
In virtue of the blow-up alternative in Theorem~\ref{teo-local},
we will deduce from \eqref{sup} that  $T_{\max}=T_{\min}=\infty$, which
will complete the proof.

Now we prove the bound for $\LL{w(t)}{2}$.
For any $t\in(-T_{\min},T_{\max})$, we
multiply (in the $H^{-1}-H^1$ duality sense) the
 equation \eqref{GP1} by $i w$, to get
\begin{align*}
\frac12 \frac{d}{dt}\LL{w(t)}{2}^2=&\Re \int_{\R^N}if(w(t))\overline w(t)\,dx \\
=&-\Im \int_{\R^N}(\Delta\phi+ \phi  (W*(1-\abs{\phi+w(t)}^2)) \overline w(t)\,dx.
\end{align*}
Then
\begin{ecu0}\label{der-w}
\frac12 {\left|\frac{d}{dt}\LL{w(t)}{2}^2\right|} \leq& \LL{\Delta \phi}{2}\LL{w(t)}{2}+
\LL{\phi}{\infty}\int_{\R^N} \abs{W*(\abs{\phi+w(t)}^2-1)} \abs{w(t)}\,dx.
\end{ecu0}
We bound the last integral in \eqref{der-w} by $H_1(t)+H_2(t)$, with
\begin{align*}
H_1(t)&=\int_{\R^N} \abs{W*(\abs{\phi}^2-1+2  \llave{\phi, w(t)}   )} \abs{w(t)}\,dx ,\\
H_2(t)&=\int_{\R^N} \abs{W*\abs{w(t)}^2}\abs{w(t)}\,dx.
\end{align*}
Since $W\in \M_{2,2}(\R^N)$,
\begin{align*}
\abs{H_1(t)}\leq&\LL{W*(\abs{\phi}^2-1+2\llave{\phi, w})}{2}\LL{w(t)}{2}\\
\leq& \norm{W}_{2,2}
\big(\LL{\abs{\phi}^2-1}{2}+2\LL{\phi}{\infty}\LL{w(t)}{2} \big) \LL{w(t)}{2}.
\end{align*}
Therefore we have
\bq \label{cota-H1}
\abs{H_1(t)}\leq C(W,\phi)(1+\LL{w(t)}{2}^2).
\eq
If $N\geq 4$, by Lemma~\ref{last} and the Sobolev embedding theorem,
\begin{align*}
\abs{H_2(t)}\leq&\LL{W*\abs{w(t)}^2}{2} \LL{w(t)}{2}\\
\leq& C(W)  \LL{w(t)}{\frac{2N}{N-2}}^2 \LL{w(t)}{2}\\
\leq& C(W)  \LL{\grad w(t)}{2}^2 \LL{w(t)}{2}.
\end{align*}
By \eqref{grad} we conclude that
\bq \label{cota-H2}
\abs{H_2(t)}\leq C(W,\phi,E_0) \LL{w(t)}{2}, \quad  \textrm{ for all } N\geq 4. \eq

If $N=2,3$, we only  need  to use that $W\in \M_{2,2}(\R^N)$, together
with the Gagliardo-Nirenberg inequality. In fact,
\begin{align*}
\abs{H_2(t)}\leq&\LL{W*\abs{w(t)}^2}{2} \LL{w(t)}{2}\\
\leq& C(W)  \LL{w(t)}{4}^2\LL{w(t)}{2}\\
\leq& C(W) \LL{\grad w(t)}{2}^{\frac{N}2}\LL{w(t)}{2}^{3-\frac{N}{2}}.
\end{align*}
Since we are considering $N=2,3$, using \eqref{grad} it follows that
\bq \label{cota-H2-2}
\LL{H_2(t)}{2}\leq C(W,\phi,E_0)(1+ \LL{w(t)}{2}^2), \qquad N=2,3. \eq
 From inequalities \eqref{der-w}--\eqref{cota-H2-2} we have that for any $N\geq 2$
\begin{equation}
{\left|\frac{d}{dt}\LL{w(t)}{2}^2\right|}\leq C(W,\phi,E_0)(1+\LL{w(t)}{2}^2), \quad t\in(-T_{\min},T_{\max}).
\label{dem-der-w}
\end{equation}
By Gronwall's lemma we conclude that
%\bq\label{exp}\LL{w(t)}{2}^2\leq e^{C(W,\phi,E_0) \abs{t}} (\LL{w_0}{2}^2+C(W,\phi,E_0)\abs{t}), \quad t\in(-T_{\min},T_{\max})\eq
\bqq \LL{w(t)}{2}\leq C(W,\phi,E_0) e^{C(W,\phi,E_0)\abs{t}}(1+\LL{w_0}{2}) ,t\in(-T_{\min},T_{\max}).\eqq
As we discussed before, this estimate implies \eqref{sup}, which finishes
the proof if $W$ is positive definite.\qed\newline

\begin{remark}
{We note that the argument given in the proof Theorem~\ref{global}-(i)-(a)} fails
 in dimension $N=1$. In this case if we apply the Gagliardo-Nirenberg 
inequality to $H_2$, instead of \eqref{dem-der-w} we obtain a bound for 
$\LL{w(t)}{2}^2$ in terms of $\LL{w(t)}{2}^{5/2}$, which prevents
to conclude applying Gronwall's lemma.
\end{remark}

\noindent{\em Proof of Theorem~\ref{global}-$(i)$-$(b).$}
In the case that $W$ is a positive distribution, we
cannot infer from \eqref{dem-global} a uniform bound on $\LL{\grad w(t)}{2}$.
However, using that $W\in \M_{1,1}(\R^N)$,  we will see that $\LL{\grad w(t)}{2}$ can be bounded in terms
of $\LL{w(t)}{2}$
and that we may deduce an inequality such as \eqref{dem-der-w} (without assuming that $\LL{\grad w(t)}{2}$ is a priori bounded). Then the conclusion
follows as before.

Let $A=4\LL{\phi}{\infty}+1$. Setting
\begin{align*}
w_A(x,t)=w(x,t)\chi(  \{y\in\R^N : \abs{w(y,t)}\leq A \} )(x),\\	
w_{A^c}(x,t)=w(x,t)\chi(  \{y\in\R^N : \abs{w(y,t)}> A \})(x) ,
\end{align*}
where $\chi$ is the characteristic function, we deduce that
$w=w_A+w_{A^c}$, $\abs{w}=\abs{w_A}+\abs{w_{A^c}},$	$\abs{w}^2=\abs{w_A}^2+\abs{w_{A^c}}^2$
and
\begin{equation}
\int_{\R^N}(W*(\abs{\phi+w(t)}^2-1))({\abs{\phi+w(t)}^2-1})\,dx =I_1(t)+I_2(t)+I_3(t),
\label{sum-I}
\end{equation}
with
\begin{align*}
I_1(t)=&\int_{\R^N}(W*(\abs{\phi}^2-1+2\llave{\phi, w(t)}))(\abs{\phi}^2-1+2\llave{\phi, w(t)})\,dx \\
&+2\int_{\R^N}(W*\abs{w(t)}^2)(\abs{\phi}^2-1)\,dx ,\\
I_2(t)=&\int_{\R^N}(W*\abs{w(t)}^2)(4\llave{\phi, w_A(t)}+ \abs{w_A(t)}^2) \,dx,\\
I_3(t)=&\int_{\R^N}(W*\abs{w(t)}^2)(4\llave{\phi, w_{A^c}(t)}+\abs{w_{A^c}(t)}^2)\,dx.
\end{align*}
Notice that we have used that $W$ is even to decompose it in terms of $I_1$, $I_2$ and $I_3$.
Since the energy \eqref{dem-global} is conserved in the maximal interval $(-T_{\min},T_{\max})$,
using \eqref{ab} and \eqref{sum-I}, we have that for any $t\in (-T_{\min},T_{\max})$,
\begin{equation}
\LL{\grad w(t)}{2}^2+I_3(t)\leq \abs{I_1(t)}+\abs{I_2(t)}+4\abs{E_0}+2\LL{\grad\phi}{2}^2.
\label{cota-grad}
\end{equation}
Since $W$ is a positive distribution, the choice of $A$ implies that
\begin{ecu0}	\label{I3-J3}
I_3(t)&\geq \int_{\R^N}(W*\abs{w(t)}^2)\abs{w_{A^c}(t)}(\abs{w_{A^c}(t)}-4\LL{\phi}{\infty})\,dx \\
& \geq \int_{\R^N} (W*\abs{w(t)}^2)\abs{w_{A^c}(t)}\,dx  \geq 0,
\end{ecu0}
so that $I_3$ is nonnegative. Using that $W\in \M_{1,1}(\R^N)$ we also have
\begin{ecu0}
\abs{I_1(t)}\leq& 	\norm{W}_{2,2}(\LL{\abs{\phi}^2-1}{2}+2\LL{\phi}{\infty}\LL{w}{2})^2 +2\norm{W}_{1,1} \LL{w}{2}^2(\LL{\phi}{\infty}^2+1) \label{cota-I1}
\end{ecu0}
and
\begin{align} \label{cota-I2}
\abs{I_2(t)}\leq \norm{W}_{1,1} (4A\LL{\phi}{\infty}+A^2)\LL{w(t)}{2}^{2}.
\end{align}
From inequalities \eqref{cota-grad}, \eqref{cota-I1} and \eqref{cota-I2}, we obtain that
\begin{equation}
\LL{\grad w(t)}{2}^2+I_3(t)\leq C({W},\phi,E_0)(1+\LL{w(t)}{2}^2),
\label{cota-apriori}
\end{equation}
for any $t\in (-T_{\min},T_{\max})$.

Let us set
\begin{align*}
J_1(t)&=\int_{\R^N} \abs{(W*(\abs{\phi}^2-1+2\llave{\phi, w(t)})) w(t)}\,dx ,\\
J_2(t)&=\int_{\R^N} \abs{(W*\abs{w(t)}^2)w_A(t)}\,dx ,\\
J_3(t)&=\int_{\R^N} \abs{(W*\abs{w(t)}^2)w_{A^c}(t)}\,dx.
\end{align*}
Then the last integral in \eqref{der-w} is bounded by $J_1(t)+J_2(t)+J_3(t)$.
As before, we conclude that
\begin{equation}
J_1(t)+J_2(t)\leq C(W,\phi)(1+\LL{w(t)}2^2).
\label{J1+J2}
\end{equation}
From \eqref{I3-J3} we have $J_3(t)\leq I_3(t)$. Then \eqref{cota-apriori} and  \eqref{I3-J3} imply that
\begin{equation}
J_3(t) \leq C(W,\phi,E_0)(1+\LL{w(t)}{2}^2).
\label{J3}
\end{equation}
The estimates \eqref{J1+J2} and \eqref{J3}, together with \eqref{der-w}, provide again
the inequality \eqref{dem-der-w}, and then the proof
is completed as in the previous case.\qed\newline

\noindent{\em Proof of Theorem~\ref{global}-$(ii).$}
As before, the local well-posedness follows from Theorem~\ref{teo-local}.
{Moreover, from Theorem~\ref{global}-(i)-(a) we have the global well-posedness for} $N\geq 2$.
From  Proposition~\ref{lema-def} we have that $W$ is a positive definite distribution
and, as shown before, this implies that $\norm{\grad{w}(t)}_{L^2}$
is uniformly bounded in the maximal interval ${(-T_{\min},T_{\max})}$ in terms
of $E_0$ and $\phi$ (see inequality \eqref{grad}). Then it only remains to prove the inequality \eqref{linear},
for $t\in {(-T_{\min},T_{\max})}$.

The argument follows the lines of the proof in \cite{banica} for the local Gross-Pitaevskii
 equation. For sake of completeness we give the details.

Since  $W$ is positive definite, from the conservation of energy we have
\bq\label{L3}
0\leq \int_{\R^N}(W*(\abs{\phi+w(t)}^2-1))(\abs{\phi+w(t)}^2-1)\,dx \leq 4E_0.
\eq
On the other hand, Lemma~\ref{lema-L2} gives a lower bound for the potential energy
\bq\label{L2}
 \sigma \LL{\abs{\phi+w(t)}^2-1}{2}^2\leq \int_{\R^N}(W*(\abs{\phi+w(t)}^2-1))(\abs{\phi+w(t)}^2-1)\,dx. \eq
From \eqref{der-w} and using H\"older inequality we obtain
\begin{ecu0}
\frac12 \left|{\frac{d}{dt}\LL{w(t)}{2}^2}\right|\leq& \LL{\Delta \phi}{2}\LL{w(t)}{2}+%\nonumber \\
\norm{W}_{2,2}\LL{\phi}{\infty} \LL{\abs{\phi+w(t)}^2-1}{2} \LL{w(t)}{2}. \label{fin}
\end{ecu0}
Thus from \eqref{L3}, \eqref{L2} and \eqref{fin}, we have that for any $\delta>0$
$$\frac12 \left|{\frac{d}{dt}(\LL{w(t)}{2}^2+\delta )}\right|\leq (\LL{w(t)}{2}^2+\delta)^\frac12
 \left(\LL{\Delta \phi}{2}+\norm{W}_{2,2}\LL{\phi}{\infty}\sqrt{\frac{4E_0}{\sigma}}\right).$$
 Dividing by $\LL{w(t)}{2}^2+\delta>0$, {integrating and then taking }$\delta\to 0$
%  and we get
% $$\frac{d}{dt}(\LL{w(t)}{2}^2+\delta)^\frac12 \leq  \LL{\Delta \phi}{2}+\norm{W}_{2,2}\LL{\phi}{\infty}\sqrt{\frac{4E_0}{\tilde \sigma}}.$$
% Integrating this last inequality and taking $\delta\to 0$ 
we conclude that
\bq\label{fin2} \LL{w(t)}{2}\leq \left(\LL{\Delta \phi}{2}+\norm{W}_{2,2}\LL{\phi}{\infty}\sqrt{\frac{4E_0}{\sigma}}\right) \abs{t}+\LL{w_0}{2},\eq
for any $t\in {(-T_{\min},T_{\max})}$.
As discussed before, this implies that $\norm{w(t)}_{H^1}$ is uniformly bounded in
$(-T_{\min},T_{\max})$.  Therefore by the blow-up alternative, we infer that $T_{\min}=T_{\max}=\infty$.
Since $u(t)=w(t)+\phi$ and $u_0=w_0+\phi$, \eqref{fin2} implies
\eqref{linear}, finishing the proof.
\end{proof}
%%%%%%%%%%%%%%%%%%%%%%%%%%%%%%%%%%%%%%%%%%%%%%%%%%%%%%%%%%%%%%%%%%%%%%%%%%%%%%%%%%%%%%%%%%%%%%%%%%%
\section{Equation (\ref{NGP}) in energy space}\label{gallo}
We recall the following results about the energy space $\E(\R^N)$.
We refer to \cite{gerard,gerard3,gallo} for their proofs.
\begin{lema}\label{lema1}
Let $u\in \E(\R^N)$. Then there exists $\phi\in C_b^\infty\cap \E(\R^N)$
with $\grad \phi\in H^\infty(\R^N)$, and $w\in H^1(\R^N)$ such that $u=\phi+w$.
\end{lema}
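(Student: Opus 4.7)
The plan is to construct $\phi$ by first truncating $u$ to a bounded function, then regularizing by convolution with a mollifier. Fix $M>1$ and let $A = \{x \in \R^N : \abs{u(x)} \leq M\}$. Since $\abs{u}^2 - 1 \in L^2(\R^N)$ and $\abs{u}^2 - 1 > M^2 - 1$ on $A^c$, the set $A^c$ has finite measure. Define $\tilde u = u$ on $A$ and $\tilde u = Mu/\abs{u}$ on $A^c$. The truncation map $z \mapsto z \min(1,M/\abs{z})$ is globally Lipschitz with $1$-Lipschitz Jacobian, so the chain rule gives $\tilde u \in H^1_{\loc}(\R^N) \cap L^\infty(\R^N)$ with $\abs{\grad \tilde u} \leq \abs{\grad u}$ pointwise, hence $\grad \tilde u \in L^2$. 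Moreover $\abs{\tilde u}^2 - 1$ agrees with $\abs{u}^2 - 1$ on $A$ and equals the constant $M^2 - 1$ on the finite-measure set $A^c$, so $\abs{\tilde u}^2 - 1 \in L^2$, and therefore $\tilde u \in \E(\R^N)$.

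To see $u - \tilde u \in H^1(\R^N)$, note that on $A^c$ one has $\abs{u - \tilde u} = \abs{u} - M \leq (\abs{u}^2 - 1)/(M+1)$, which is in $L^2$, while $\grad(u - \tilde u)$ is supported in $A^c$ and dominated pointwise by $2\abs{\grad u} \in L^2$. Now let $\rho \in C_c^\infty(\R^N)$ be a mollifier with $\int \rho = 1$, and define $\phi := \rho * \tilde u$. Then $\phi \in C^\infty(\R^N)$ with $\norm{\phi}_{L^\infty} \leq \norm{\tilde u}_{L^\infty} \leq M$, and every derivative $\partial^\alpha \phi = (\partial^\alpha \rho)*\tilde u$ is bounded by Young's inequality, so $\phi \in C_b^\infty(\R^N)$. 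For the gradient, $\partial^\alpha \grad \phi = (\partial^\alpha \rho) * \grad \tilde u \in L^2$ again by Young's inequality, hence $\grad \phi \in H^\infty(\R^N)^N$.

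It remains to verify that $\phi \in \E(\R^N)$ and $w := u - \phi \in H^1(\R^N)$. A standard Minkowski-type estimate yields $\norm{\rho * \tilde u - \tilde u}_{L^2} \leq C(\rho)\norm{\grad \tilde u}_{L^2}$, so $\tilde u - \phi \in L^2$. Since both $\phi$ and $\tilde u$ are bounded by $M$, the inequality $\bigl|\abs{\phi}^2 - \abs{\tilde u}^2\bigr| \leq (\abs{\phi} + \abs{\tilde u})\abs{\phi - \tilde u} \leq 2M\abs{\phi - \tilde u}$ shows $\abs{\phi}^2 - \abs{\tilde u}^2 \in L^2$, and hence $\abs{\phi}^2 - 1 = (\abs{\phi}^2 - \abs{\tilde u}^2) + (\abs{\tilde u}^2 - 1) \in L^2$. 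This completes $\phi \in \E$. Finally $u - \phi = (u - \tilde u) + (\tilde u - \phi) \in H^1$, as required.

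The main technical ingredients are the $L^2$-control of $u - \tilde u$ via the energy bound $\abs{u}^2-1 \in L^2$ (not $\grad u \in L^2$, which is insufficient without the extra decay) and of $\tilde u - \phi$ via the gradient integrability of $\tilde u$; once these two $L^2$ estimates are in hand, every other property (boundedness of $\phi$ and its derivatives, $H^\infty$ regularity of $\grad \phi$, and $L^2$ control of $\abs{\phi}^2 - 1$) is a routine consequence of Young's inequality and the uniform $L^\infty$ bound obtained from the truncation step.
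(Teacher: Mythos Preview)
The paper does not prove this lemma itself; it merely quotes the statement and refers to G\'erard \cite{gerard,gerard3} and Gallo \cite{gallo} for the proof. Your argument is correct and is precisely the standard one found in those references: radially truncate $u$ to a bounded function (the map $z\mapsto z\min(1,M/|z|)$ is the metric projection onto the closed ball of radius $M$, hence $1$-Lipschitz, which is what justifies $|\nabla\tilde u|\le|\nabla u|$ via the chain rule for Lipschitz functions of Sobolev maps), then mollify and use the translation estimate $\|\tau_h\tilde u-\tilde u\|_{L^2}\le |h|\,\|\nabla\tilde u\|_{L^2}$ together with Minkowski's integral inequality to control $\tilde u-\rho*\tilde u$ in $L^2$.
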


\begin{lema}\label{lema2}
Let $1\leq N\leq 4$. Then $\E(\R^N)$ is a complete metric space
with the distance \eqref{distance}, $\E(\R^N)+H^1(\R^N) \subset \E(\R^N)$
and the maps
\begin{gather*}
u\in \E(\R^N) \mapsto \grad u\in L^2(\R^N),\ u\in \E(\R^N) \mapsto 1-\abs{u}^2\in L^2(\R^N), \\
 (u,w)\in \E(\R^N)\times H^1(\R^N) \mapsto u+w\in \E(\R^N) 
\end{gather*}
are continuous.
\end{lema}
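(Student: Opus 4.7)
The plan is to establish the four assertions (metric, sum property, continuity of the two maps, completeness) one by one, leaning on Lemma~\ref{lema1} for the inclusion $\E+H^1\subset\E$ and on the Banach structure of $X^1+H^1$ for completeness.

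First I would verify that $d$ is a metric. Viewing $X^1$ and $H^1$ as subspaces of the ambient Fr\'echet space $L^2_{\loc}(\R^N)$, the sum $X^1+H^1$ is a normed space with the infimum norm, so $\norm{u-v}_{X^1+H^1}$ already forces $u=v$ when it vanishes; the remaining term is nonnegative, symmetric and compatible with the triangle inequality. For the inclusion $\E(\R^N)+H^1(\R^N)\subset \E(\R^N)$, write any $u\in\E$ via Lemma~\ref{lema1} as $u=\phi+w$ with $\phi\in X^1$ smooth, bounded and $w\in H^1$, so that for $h\in H^1$, $u+h=\phi+(w+h)$. Clearly $\grad(u+h)\in L^2$. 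Expanding
\begin{equation*}
1-\abs{u+h}^2=(1-\abs{\phi}^2)-2\phi\cdot(w+h)-\abs{w+h}^2,
\end{equation*}
the first term is in $L^2$ because $\phi\in \E$, the second because $\phi\in L^\infty$, and the third lies in $L^2$ thanks to the Sobolev embedding $H^1(\R^N)\hookrightarrow L^4(\R^N)$, which is where the restriction $1\leq N\leq 4$ enters.

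For the continuity of the two maps, the second is built into the distance: $\norm{(1-\abs{u_n}^2)-(1-\abs{u}^2)}_{L^2}\leq d(u_n,u)$. For the gradient map, any decomposition $u_n-u=v+w$ with $v\in X^1$, $w\in H^1$ gives $\norm{\grad(u_n-u)}_{L^2}\leq \norm{v}_{X^1}+\norm{w}_{H^1}$; taking the infimum yields the continuous estimate $\norm{\grad(u_n-u)}_{L^2}\leq \norm{u_n-u}_{X^1+H^1}\leq d(u_n,u)$.

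Finally, for completeness let $(u_n)$ be $d$-Cauchy. By the previous step, $\grad u_n\to g$ in $L^2$ and $1-\abs{u_n}^2\to h$ in $L^2$. Realising $X^1+H^1$ as the quotient of the Banach space $X^1\oplus H^1$ by the closed subspace $\{(v,-v):v\in X^1\cap H^1\}$ shows that $X^1+H^1$ is itself Banach, so $u_n\to u^*$ there for some $u^*\in L^2_{\loc}(\R^N)$. Picking near-optimal decompositions $u_n-u^*=v_n+w_n$ with $\norm{v_n}_{X^1}+\norm{w_n}_{H^1}\to 0$ gives $u_n\to u^*$ in $L^\infty+L^2$, hence in $L^2_{\loc}$, so $\abs{u_n}^2\to\abs{u^*}^2$ in $L^1_{\loc}$. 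Comparing with $1-\abs{u_n}^2\to h$ in $L^2$ identifies $h=1-\abs{u^*}^2\in L^2$, and the distributional identity $\grad u^*=g$ gives $\grad u^*\in L^2$. Hence $u^*\in\E(\R^N)$ and $d(u_n,u^*)\to 0$. The main obstacle I expect is the last part: establishing that $X^1+H^1$ is actually complete and then cleanly identifying the two $L^2$-limits with the corresponding quantities of the candidate limit $u^*$; once this is done, everything else is a direct application of the Sobolev embedding valid only for $N\leq 4$.
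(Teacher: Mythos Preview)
The paper does not actually prove this lemma: immediately before Lemma~\ref{lema1} it states ``We refer to \cite{gerard,gerard3,gallo} for their proofs,'' and no argument is given. So there is no in-paper proof to compare against; your proposal is an honest attempt to supply what the paper outsources.

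Your argument is sound and follows the standard route found in those references. The key points are all in place: the Sobolev embedding $H^1\hookrightarrow L^4$ (valid precisely for $N\leq 4$) to get $|w+h|^2\in L^2$; the observation that both summands in the $X^1+H^1$ norm control $\nabla$ in $L^2$; and the Banach-quotient realisation of $X^1+H^1$ for completeness. Two small points worth making explicit in a final write-up: first, that $\E\subset X^1+H^1$ (so that $d$ is defined on all of $\E$) follows from Lemma~\ref{lema1}, since the $\phi$ produced there lies in $X^1$; second, in the completeness step, the passage from $u_n\to u^*$ in $L^2_{\loc}$ to $|u_n|^2\to|u^*|^2$ in $L^1_{\loc}$ uses the bound $\bigl||u_n|^2-|u^*|^2\bigr|\leq |u_n-u^*|\,(|u_n|+|u^*|)$ together with the $L^2_{\loc}$-boundedness of $(u_n)$, which you have but do not state. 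Neither is a gap, just a matter of polish.
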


% \begin{lema}\label{lema3}
% Let  $2\leq p\leq \frac{2N}{N-2}$ $(2\leq p\leq \infty$ if $N=1$, $2\leq p< \infty$ if $N=2$$)$. Then
% the inclusion $X^1(\R^N)+H^1(\R^N)\subset L^\infty(\R^N)+L^p(\R^N)$ is continuous.
% \end{lema}
% \begin{graybox}\noindent
\begin{lema}\label{lema4}
{Assume} $1\leq N\leq 4$. Let $W\in\M_{2,2}(\R^N)$, $u\in C(\R,\E(\R^N))$, $v\in C(\R,L^2(\R^N))$ and
$$\Phi(t):= \int_{0}^t e^{i(t-s)\Delta}u(s)(W*v(s))\,ds, \quad t\in [0,T].$$
Then $\Phi\in C([0,T],L^2(\R^2))$
and there exists a  universal constant $C$ such that 
$$\norm{\Phi}_{L^\infty((0,T),L^2)}\leq C\max\{T,T^{\frac{8-N}{N}}\} \norm{W}_{2,2}
(\norm{1-\abs{u}^2}_{L^\infty((0,T),L^2)}+\norm{\grad u}_{L^\infty((0,T),L^2)})
\norm{v}_{L^\infty((0,T),L^2)}.
$$
\end{lema}
\begin{proof}
By  Lemma~1 in \cite{gerard} and Lemma \ref{lema2}, we may decompose $u(t)=u_1(t)+u_2(t),$
with $\norm{u_1}_{L^\infty(\R,L^\infty)}\leq 3$ and 
\bq \label{last1} 
\norm{u_2}_{L^\infty((0,T),H^1)}\leq C(\norm{1-\abs{u}^2}_{L^\infty((0,T),L^2)}+\norm{\grad u}_{L^\infty((0,T),L^2)}).
\eq
Let us set 
$$\Phi_j(t):= \int_{0}^t e^{i(t-s)\Delta}u_j(s)(W*v(s))\,ds, \quad j=1,2.$$
By the Strichartz estimates we have that $\Phi_1\in C([0,T],L^2(\R^2))$ and
\bq \label{last2}
\norm{\Phi_1}_{L^\infty((0,T),L^2)}\leq C T \norm{W}_{2,2} \norm{v}_{L^\infty(\R,L^2)}.\eq
Since $(8/N,4)$ is an admissible Strichartz pair in dimension $1\leq N\leq 4$, we also 
infer that $\Phi_2\in C([0,T],L^2(\R^2))$ and
\begin{ecu0} \label{last3}
\norm{\Phi_2}_{L^\infty((0,T),L^2)}&\leq  C T^{\frac{8-N}{N}} \norm{u(W*v)}_{L^\infty(\R,L^{4/3})} \\
&\leq C T^{\frac{8-N}{N}}\norm{W}_{2,2} \norm{u}_{L^\infty(\R,L^4)}\norm{v}_{L^\infty(\R,L^2)}
\end{ecu0}
Combining \eqref{last1}-\eqref{last3} and using 
the Sobolev embedding $H^1(\R^N)\hookrightarrow L^4(\R^N)$, the conclusion follows. 
\end{proof}
% \end{graybox}

\begin{proof}[Proof of Theorem~\ref{teo-gallo}]
After Theorem~\ref{global}, the proof follows the same arguments
given in \cite{gallo}. For sake of completeness we sketch
the proof.

Given $u_0\in {\E(\R^N)}$,  by Lemma~\ref{lema1} we have that $u_0=\phi+\tilde w_0$,
for some $\tilde w_0\in H^1(\R^N)$ and $\phi$  satisfying \eqref{phi}.
Thus Theorem~\ref{global} gives a solution of \eqref{NGP} of the
form $u=\phi+\tilde w$, with $\tilde w\in C(\R,H^1(\R^N))$. Therefore $u=u_0+ w$,
with $w=\tilde w-\tilde w_0$ is the desired solution.
% \begin{graybox}\noindent
To prove the uniqueness in the energy space, we consider $1\leq N\leq 4$.
Let $v\in C(\R,\E(\R^N))$ be a {mild} solution of \eqref{NGP} with $v(0)=u_0$.
It is sufficient to show that $v-u_0\in C(\R,H^1(\R^N))$, because
then we may apply the uniqueness result given by Theorem~\ref{global}.
We do this by proving that $u-v\in C(\R,H^1(\R^N))$.
Note that by Lemma~\ref{lema2},
 $u\in u_0+C(\R,H^1(\R^N)) \subset C(\R,\E(\R^N))$
  and  $\grad u,\grad v\in C(\R,L^2(\R^N))$.
It only remains to prove that $u-v\in C(\R,L^2(\R^N))$. Let $T>0$
and $t\in [0,T]$, then
\bqq%\label{dife-u}
u(t)-v(t)=i \int_{0}^t e^{i(t-s)\Delta}(G(u(s))-G(v(s)))\,ds,
\eqq
with
\bqq
G(u)-G(v)=u (W*(\abs{v}^2-\abs{u}^2))+(u-v)(W*(1-\abs{v}^2)).
\eqq
{Applying Lemma \ref{lema4} to} $u(W*(\abs{v}^2-\abs{u}^2))$ and 
$(u-v)(W*(1-\abs{v}^2))$, we conclude that $u-v\in C([0,T],L^2(\R^N))$.
% \end{graybox}
% We recall that an application of H\"older inequality (see \cite{gallo})
% implies that for any $r_1,r_2\in [1,\infty]$,
% \bqq
% \norm{fg}_{L^2+L^{r_1}}\leq C \norm{f}_{L^\infty+L^{r_2}}\norm{g}_{L^2},
% \eqq
%  with $\frac{1}{r_1}=\frac{1}{r_2}+\frac12$. {Using this inequality with }
% $r_1=4/3$ and $r_2=4$, 
% % $r_2=6$ if $1\leq N\leq 3$, $r_2=4$ if $N=4$  and $r_1=q'$,
% \begin{align*}
% \norm{G(u)-G(v)}_{L^2+L^{4/3}}&\leq C(W) \norm{u}_{L^\infty+L^{4}}\LL{\abs{v}^2-\abs{u}^2}{2}\\
% &+C(W)\norm{u-v}_{L^\infty+L^{4}}\LL{1-\abs{v}^2}{2}, \\
% &\leq C(W)( \norm{u}_{X^1+H^1}d(u,v)+\norm{u-v}_{X^1+H^1}d(v,1)),
% \end{align*}
% where we have used Lemma~\ref{lema3} for the last inequality. Since $u,v\in C([0,T],\E(\R^N))$, this shows
% that $\norm{G(u)-G(v)}_{L^2+L^{4/3}}$ is uniformly bounded in $[0,T]$. 
% % We also have that $q=6$ if $1\leq N\leq 3$ and $q=4$ if $N=4$. Then we may apply
% Since $(8/N,4)$ is an admissible Strichartz pair in dimension $1\leq N\leq 4$, this bound together 
% with the Strichartz estimates applied to \eqref{dife-u} imply that $u-v\in C([0,T],L^2(\R^N))$.
\end{proof}

\section{Other conservation laws}\label{conservation}
In this section we consider a global solution $u$ of \eqref{NGP}
given by Theorem~\ref{global}. We have already seen that the energy is conserved
by the flow of this solution. Now we discuss the notions
of momentum and mass associated to the equation \eqref{NGP},
that are also formally conserved.

\subsection{The momentum}
The vectorial momentum for \eqref{NGP} is given by
\bq\label{def-mom}     p(u)=\frac{1}{2}\int_{\R^N}{\llave{i\grad u ,{u}}}\,dx.\eq
%where ${\llave{ z_1,z_2}}=2\Re(z_1 \bar{z}_2)$.
A formal computation shows that the derivative of the momentum is zero and
thus it is a conserved quantity. Moreover, if $u=\phi+w$ we have
\begin{align*}%\label{formal-ipp}
    p (u)=&\frac{1}{2}\int_{\R^N}{\llave{i\grad \phi ,\phi}}\,dx+
\frac{1}{2}\int_{\R^N}\llave{i\grad w ,w}\,dx\\
&+ \frac{1}{2}\int_{\R^N}{\llave{i\grad \phi ,w}}\,dx+
\frac{1}{2}\int_{\R^N}\llave{i\grad w ,\phi}\,dx.
\end{align*}
Here the problem is that ${\llave{i\grad \phi ,\phi-1}}$ and
${\llave{i\grad w ,\phi-1}}$  are not necessarily integrable
for  $w\in C(\R,H^1(\R^N))$.
However, a formal integration by parts yields
\bq\label{mom-for}
    p (u)=\frac{1}{2}\int_{\R^N}{\llave{i\grad \phi ,\phi}}\,dx+
\frac{1}{2}\int_{\R^N}{\llave{i\grad w ,w}}\,dx+
\int_{\R^N}{\llave{i\grad \phi ,w}}\,dx,
\eq
reducing the ill-defined term to ${\llave{i\grad \phi ,\phi}}$,
supposing that we can justify the integration by parts.
In order to give a rigorous sense to these computations,
we use the following definition proposed by Mari\c{s} in \cite{maris}.
\begin{definition}
Let $\X(\R^N)=\{\grad v : v\in \H^1(\R^N) \}$
and $\X_j(\R^N)=\{\partial_j v : v\in\H^1(\R^N) \}$, with $j=1,\dots, N.$
For any $h_1\in L^1(\R^N)$ and $h_2\in \X_j(\R^N)$
we define the linear operator $L_j$ on $L^1(\R^N)+\X_j(\R^N)$ by
$$L_j(h_1+h_2)=\frac{1}{2}\int_{\R^N}h_1~dx.$$
\end{definition}

\begin{lema}\label{lema-int}
Let $N\geq 2$ and $j\in\{1,\dots,N\}$. Then
  $$\int_{\R^N}h=0, \quad \textup{for any }h\in L^1(\R^N)\cap \X_j(\R^N).$$
In particular $L_j$ is a well-defined linear continuous operator on $L^1(\R^N)+\X_j(\R^N)$
in any dimension $N\geq 2$.
\end{lema}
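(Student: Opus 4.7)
The plan is to reduce the lemma to showing $\int_{\R^N} h\,dx = 0$ for every $h \in L^1(\R^N) \cap \X_j(\R^N)$, since the statements about $L_j$ are then routine bookkeeping. Writing $h = \partial_j v$ with $v \in \H^1(\R^N)$, one has $\grad v \in L^2(\R^N)$, hence $h \in L^1(\R^N) \cap L^2(\R^N)$, so $\wh h$ is a continuous bounded $L^2$ function with $\wh h(0) = \int_{\R^N} h\,dx$; the goal becomes $\wh h(0) = 0$.

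The main step will exploit a Fourier identity coming from the equality of distributional mixed partials $\partial_j(\partial_k v) = \partial_k h$. Taking Fourier transforms gives $\xi_j\,\wh{\partial_k v}(\xi) = \xi_k\,\wh h(\xi)$ as tempered distributions, an equality between locally integrable functions and hence valid almost everywhere; for any fixed $k \neq j$ (available since $N \geq 2$) I then divide by $\xi_j$ on $\{\xi_j \neq 0\}$ to obtain $\wh{\partial_k v}(\xi) = (\xi_k/\xi_j)\wh h(\xi)$ a.e. Plancherel applied to $\partial_k v \in L^2(\R^N)$ yields
\begin{equation*}
\int_{\R^N} \frac{\xi_k^2}{\xi_j^2}\,|\wh h(\xi)|^2\,d\xi < \infty.
\end{equation*}
Assuming for contradiction that $\wh h(0) = c \neq 0$, continuity of $\wh h$ supplies $\delta > 0$ with $|\wh h|^2 \geq c^2/2$ on $B(0,\delta)$, and the previous display then forces $\int_{B(0,\delta)} \xi_k^2/\xi_j^2\,d\xi < \infty$. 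This however fails for $N \geq 2$: fixing the $N-2$ coordinates other than $\xi_j,\xi_k$ in a small box and invoking Fubini, the integral factorizes and carries the divergent factor $\int 1/\xi_j^2\,d\xi_j$. Therefore $\wh h(0) = 0$, i.e.\ $\int_{\R^N} h\,dx = 0$.

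The statement on $L_j$ then follows directly: if $h_1 + h_2 = h_1' + h_2'$ with $h_1,h_1' \in L^1(\R^N)$ and $h_2,h_2' \in \X_j(\R^N)$, then $h_1 - h_1' = h_2' - h_2 \in L^1(\R^N) \cap \X_j(\R^N)$ has vanishing integral by the first part, so $\tfrac12\int h_1 = \tfrac12\int h_1'$ and $L_j$ is well-defined; linearity is immediate, and continuity with respect to the natural quotient norm on $L^1(\R^N) + \X_j(\R^N)$ follows from $|L_j(h_1+h_2)| \leq \tfrac12\LL{h_1}{1}$ upon taking the infimum over representations. The main technical obstacle will be the careful passage from the distributional identity to the a.e.\ factorization $\wh{\partial_k v} = (\xi_k/\xi_j)\wh h$ on $\{\xi_j \neq 0\}$, together with the verification that $\xi_k^2/\xi_j^2$ fails to be locally integrable near the origin in dimension $N \geq 2$; the hypothesis $N \geq 2$ is essential, as the claim fails in $N = 1$ (witness $v = \arctan$, whose derivative is integrable with nonzero mean).
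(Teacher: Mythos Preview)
Your argument is correct. The paper itself does not give a self-contained proof: it cites Mari\c{s} (Lemma~2.3 in \cite{maris}) for $N\geq 3$ and then invokes G\'erard's observation that every element of $\H^1(\R^2)$ defines a tempered distribution in order to make that argument go through for $N=2$. Your route is genuinely different and in one respect cleaner: you never take the Fourier transform of $v$ itself, only of the $L^2$ functions $\partial_k v$ and of $h\in L^1\cap L^2$, so the question of whether $v\in\H^1(\R^N)$ is a tempered distribution never arises, and the cases $N=2$ and $N\geq 3$ are handled uniformly. The only cosmetic point is that ``the integral factorizes'' over $B(0,\delta)$ should be read as: $B(0,\delta)$ contains a cube $[-\delta/\sqrt N,\delta/\sqrt N]^N$, and on that cube Fubini gives the divergent one-dimensional factor $\int \xi_j^{-2}\,d\xi_j$; this is of course what you mean, but it is worth making explicit.
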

\begin{proof}
The proof of Lemma~\ref{lema-int} is given by Mari\c{s} (Lemma 2.3 in \cite{maris}) in the
case $N\geq 3$. The same argument works in dimension two, provided that
a function in $\H^1(\R^2)$ defines a tempered distribution. In fact,
this last point was shown by G\'erard  (see \cite{gerard3}, p.~8), concluding the proof.
\end{proof}

Following the ideas proposed in \cite{maris} in dimension $N\geq 3$,
we have the following result that is essential to define our notion of momentum.
\begin{lema}\label{des-phi}
Let $N\geq 2$, $j=1,\dots, N$ and $w\in H^1(\R^N)$. Then ${\llave{i \partial_j \phi,\phi}} \in L^1(\R^N)+\X_j(\R^N)$,
\mbox{${\llave{i \partial_j \phi,w}} \in L^1(\R^N)$},  ${\llave{i\phi,\partial_j w}} \in L^1(\R^N)+\X_j(\R^N)$ and
\bq\label{ipp} L_j(\llave{i\partial_j \phi,w})=-L_j({\llave{i \phi,\partial_j w}} ).\eq
\end{lema}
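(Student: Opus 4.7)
The plan is to verify the four assertions in sequence, using only the Sobolev-algebra properties of $\phi$ from \eqref{phi} together with the definition $\langle z_1,z_2\rangle=\Re(iz_1\overline{z_2})$.

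For the first assertion I would compute
\[ \langle i\partial_j\phi,\phi\rangle = -\Re(\partial_j\phi\cdot\overline{\phi}) = -\tfrac{1}{2}\partial_j(\abs{\phi}^2-1).\]
Since $\abs{\phi}^2-1\in L^2(\R^N)$ by \eqref{phi} and $\nabla(\abs{\phi}^2-1)=2\Re(\overline{\phi}\nabla\phi)$ lies in $L^2(\R^N)$ (as $\phi\in L^\infty$ and $\nabla\phi\in L^2$), we have $\abs{\phi}^2-1\in \H^1(\R^N)$, and hence $\langle i\partial_j\phi,\phi\rangle\in\X_j(\R^N)\subset L^1(\R^N)+\X_j(\R^N)$. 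For the second assertion, $\partial_j\phi\in L^2(\R^N)$ (since $\nabla\phi\in H^2$) and $w\in L^2(\R^N)$, so the Cauchy--Schwarz inequality gives $\langle i\partial_j\phi,w\rangle\in L^1(\R^N)$.

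For the third assertion, the key input is the distributional product rule
\[ \partial_j\langle i\phi,w\rangle=\langle i\partial_j\phi,w\rangle+\langle i\phi,\partial_j w\rangle,\]
which is valid because $\phi\in W^{1,\infty}(\R^N)$ and $w\in H^1(\R^N)$. Observing that $\phi w\in L^2(\R^N)$ and $\nabla(\phi w)=(\nabla\phi)w+\phi\nabla w\in L^2(\R^N)$ (using $\phi,\nabla\phi\in L^\infty$ and $w\in H^1$), we obtain $\phi w\in\H^1(\R^N)$, hence $\langle i\phi,w\rangle\in\H^1(\R^N)$ and $\partial_j\langle i\phi,w\rangle\in\X_j(\R^N)$. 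Combined with the $L^1$ statement just proved, the product rule yields the decomposition $\langle i\phi,\partial_j w\rangle=\partial_j\langle i\phi,w\rangle-\langle i\partial_j\phi,w\rangle\in L^1(\R^N)+\X_j(\R^N)$.

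Finally, for identity \eqref{ipp} I would use that $L_j$ is linear on $L^1+\X_j$ and annihilates $\X_j$ (by the choice $h_1=0$ in its definition, which is consistent thanks to Lemma~\ref{lema-int}). Applying $L_j$ to the decomposition above gives
\[ L_j(\langle i\phi,\partial_j w\rangle)=L_j(\partial_j\langle i\phi,w\rangle)-L_j(\langle i\partial_j\phi,w\rangle)=-L_j(\langle i\partial_j\phi,w\rangle),\]
which is precisely \eqref{ipp}. The only mildly delicate point is verifying $\phi w\in\H^1(\R^N)$, as this underlies both the decomposition of $\langle i\phi,\partial_j w\rangle$ and the implicit integration by parts in \eqref{ipp}; it is immediate from the $W^{1,\infty}$ regularity of $\phi$.
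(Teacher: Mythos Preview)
Your argument for assertions 2, 3 and \eqref{ipp} is clean and correct, and in fact more direct than the paper's, which for those points defers to a modification of Lemma~2.5 in \cite{maris}. The product rule $\partial_j\langle i\phi,w\rangle=\langle i\partial_j\phi,w\rangle+\langle i\phi,\partial_j w\rangle$ together with $\langle i\phi,w\rangle\in H^1(\R^N)\subset\dot H^1(\R^N)$ gives the decomposition and the identity immediately.

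However, your treatment of the first assertion rests on a miscomputation of the bracket. The pairing used throughout the paper (cf.\ Lemma~\ref{lema-int-finita} and the explicit polar computation \eqref{des1}) is $\langle z_1,z_2\rangle=\Re(z_1\bar z_2)$; the formula $\Re(iz_1\bar z_2)$ in the introduction is a typo, as one sees already from the fact that it would make the momentum density $\langle i\nabla u,u\rangle=-\tfrac12\nabla|u|^2$ identically a total derivative. With the correct convention,
\[
\langle i\partial_j\phi,\phi\rangle=\Re(i\partial_j\phi\,\bar\phi)=-\Im(\partial_j\phi\,\bar\phi),
\]
which is \emph{not} $-\tfrac12\partial_j(|\phi|^2-1)$ and is not a $j$-th derivative of any obvious $\dot H^1$ function. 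Your argument therefore does not prove $\langle i\partial_j\phi,\phi\rangle\in L^1+\X_j$.

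What the paper does (and what seems unavoidable) is to exploit the hypothesis in \eqref{phi} that $\nabla\phi\in C(B^c)$ and $|\phi|$ is bounded away from zero outside a ball. This allows a polar lifting $\phi=\rho e^{i\theta}$ on $B(0,R)^c$ with $\partial_j\rho,\partial_j\theta\in L^2(B(0,R)^c)$, extended to global $C^1$ functions via Whitney. Writing $\phi=\phi_1+\phi_2$ with $\phi_1=\rho e^{i\theta}$ and $\phi_2$ compactly supported, one computes
\[
\langle i\partial_j\phi_1,\phi_1\rangle=-\rho^2\partial_j\theta=(1-\rho^2)\partial_j\theta-\partial_j\theta,
\]
which exhibits the $L^1+\X_j$ decomposition, the first term being a product of two $L^2$ functions and the second lying in $\X_j$ since $\theta\in\dot H^1$. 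The cross terms involving $\phi_2$ are compactly supported, hence in $L^1$. This polar-decomposition step is precisely the missing idea in your proposal.
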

\begin{proof}
The assumption \eqref{phi} implies that there is
a radius $R>1$ such that
$\abs{\phi(x)}\geq \frac{1}{2}$, for all $x\in B(0,R)^c$
and  $\phi$ is $C^1$ in $B(0,R)^c$.
Then,   there are some scalar functions $\tilde \rho,\tilde \theta\in C^1(B(0,R)^c)\cap H^1_{\textup{loc}}(B(0,R)^c)$
such that
$$\phi=\tilde \rho e^{i\tilde \theta},\qquad \textup{ on } B(0,R)^c.$$
Moreover, since $\partial_j\phi\in L^2(\R^N)$ and
$$\abs{\partial_j\phi}^2=\abs{\partial_j \tilde \rho}^2+{\tilde \rho}^2\abs{\partial_j \tilde{\theta}}^2,\quad \textup{ on } B(0,R)^c$$
we deduce that $\partial_j \tilde \rho,\partial_j \tilde \theta \in L^2(B(0,R)^c)$.
By Whitney extension theorem (cf.~\cite{krantz}, p.~167), there exist scalar functions $\rho$, $\theta \in C^1(\R^N)$
such that $\rho=\tilde \rho$ and $\theta=\tilde  \theta $ on $B(0,R)^c$. Setting
$$\phi_1=\rho e^{i \theta}\quad \textup{ and }\quad \phi_2=\phi-\phi_1,$$
we have
\bq\label{des-mom} {\llave{i \partial_j \phi,\phi}}={\llave{i \partial_j \phi_1,\phi_1}}+
{\llave{i \partial_j \phi_1,\phi_2}}+
{\llave{i \partial_j \phi_2,\phi_1}}+
{\llave{i \partial_j \phi_2,\phi_2}}.\eq
Since  $\supp\phi_2,\supp\grad \phi_2 \subset \bar B(0,R)$, the last three terms in the r.h.s. of \eqref{des-mom} belong to $L^1(\R^N)$.
For the remaining term, a direct computation gives
\bq\label{des1}
{\llave{i \partial_j \phi_1,\phi_1}}=- \rho^2 \partial_j \theta=(1-\rho^2)\partial_j\theta-\partial_j\theta, \quad \textup{ on }\R^N.\eq
The fact that $\partial_j \tilde \theta \in L^2(B(0,R)^c)$
implies that $\partial_j\theta \in L^2(\R^N)$ and
from \eqref{phi} it follows that ${\abs{\rho}^2-1}\in L^2(\R^N)$. Therefore from \eqref{des1}
we conclude that ${\llave{i \partial_j \phi_1,\phi_1}} \in L^1(\R^N)+\X_j(\R^N)$ and hence
${\llave{i \partial_j \phi,\phi}} \in L^1(\R^N)+\X_j(\R^N)$.

To finish the proof, we notice that from \eqref{phi} and the above 
computations we also have that
 $\phi_1\in \X(\R^N)\cap C^1(\R^N)\cap W^{1,\infty}(\R^N)$ and $\phi_2\in H^1(\R^N)$. Then
a slight modification of the argument given in Lemma 2.5 in \cite{maris}, allows us
to deduce that ${\llave{i \partial_j \phi,w}} \in L^1(\R^N)$,  ${\llave{i\phi,\partial_j w}} \in L^1(\R^N)+\X_j(\R^N)$ and
the identity \eqref{ipp}.
\end{proof}

In virtue of Lemma \ref{des-phi} and making an analogy to \eqref{def-mom}, for $N\geq 2$ and $u\in \phi+H^1(\R^N)$, we define the {\em generalized momentum}
$q=(q_1,\dots,q_N)$ as
\bqq q_j(u)=L_j({\llave{i\partial_j u,u}}), \quad j=1\dots, N. \eqq
Furthermore, by \eqref{ipp} we have
\bq\label{mom-for2}
 q_j(u)=L_j({\llave{i\partial_j \phi ,\phi}})+
\frac{1}{2}\int_{\R^N}{\llave{i\partial_j w ,w}}\,dx+
\int_{\R^N}{\llave{i\partial_j\phi ,w}}\,dx,
\eq
which can be seen as a rigorous formulation of \eqref{mom-for}.

In dimension one, the operator $L_j$ is not well-defined. In fact, following
the idea of the proof of Lemma \ref{des-phi}, if we assume that
$u=\rho e^{i\theta}$ then
\bqq{\llave{i u',u}}=- \rho^2 \theta'=(1-\rho^2)\theta'-\theta'.\eqq
Supposing that $\lim\limits_{R\to\infty}(\theta(R)-\theta(-R))$ exists, we would have
\bq\label{theta-2} \int_{\R}\theta'(x)\,dx =\lim_{R\to\infty}(\theta(R)-\theta(-R)).\eq
Thus we necessarily need to modify the definition of the momentum in the one-dimensional
case to take into account the phase change \eqref{theta-2}. This approach is taken in
\cite{bethuel-black} using the following notion of untwisted momentum.
\begin{definition} For $u\in \phi+H^1(\R)$, we define the
operator $\mathcal L$ on $\phi+H^1(\R)$ by
\bq\label{mom1} \mathcal  L(u)=\lim\limits_{R\to\infty}\left( \frac12\int_{-R}^{R} {\llave{i u' ,u}}\,dx+\frac12\left(\arg u(R)-\arg u(-R)\right) \right)\mod \pi\eq
\end{definition}
In \cite{bethuel-black} it is proved that the limit in \eqref{mom1} actually exists. 
% for $u\in \E(\R)$ but the same argument is valid for $u\in \phi+H^1(\R)$.
Therefore,
as in the higher dimensional case, we define the {\em generalized momentum
in dimension one} as
$$q_1(u)=\mathcal  L(u).$$
The following result shows that this definition give{s} us
an analogous expression to \eqref{mom-for2}.
\begin{lema}[\cite{bethuel-black}]\label{mom-dim-1} Let $u=\phi+w$, $w \in H^1(\R)$. Then
\bqq  q_1(u)=\mathcal  L(\phi)+\frac{1}{2}\int_{\R}{\llave{i w' ,w}}\,dx+\int_{\R}{\llave{i \phi' ,w}}\,dx.\eqq
\end{lema}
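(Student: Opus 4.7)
The plan is to compute $q(u) - \mathcal{L}(\phi) = \mathcal{L}(u) - \mathcal{L}(\phi)$ directly from the definition~\eqref{mom1}. First I would check that both integrals on the right of the claim are finite: $\langle iw',w\rangle \in L^1(\R)$ since $w,w'\in L^2(\R)$, while $\langle i\phi',w\rangle \in L^1(\R)$ because $\phi'\in L^2(\R)$ by~\eqref{phi} and $w\in L^2(\R)$.

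The core computation starts from the pointwise expansion
\bqq
\langle iu',u\rangle = \langle i\phi',\phi\rangle + \langle i\phi',w\rangle + \langle iw',\phi\rangle + \langle iw',w\rangle.
\eqq
The cross terms can be reconciled via integration by parts on $[-R,R]$. Starting from $(w\overline\phi)' = w'\overline\phi + w\overline{\phi}'$ and extracting the appropriate scalar part (so as to match the bracket $\langle\cdot,\cdot\rangle$), one obtains
\bqq
\int_{-R}^{R}\langle iw',\phi\rangle\,dx = \int_{-R}^{R}\langle i\phi',w\rangle\,dx - B(R),
\eqq
where $B(R) = \Im[w\overline\phi]_{-R}^{R}$. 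Since $w\in H^1(\R)\hookrightarrow C_0(\R)$ we have $w(\pm R)\to 0$, and $\phi\in L^\infty(\R)$, so $B(R)\to 0$. Combining the two cross terms therefore produces the coefficient $1$ in front of $\int_\R\langle i\phi',w\rangle\,dx$ appearing in the claim.

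It then remains to show that the phase correction
\bqq
\Delta(R) := \bigl(\arg u(R)-\arg\phi(R)\bigr)-\bigl(\arg u(-R)-\arg\phi(-R)\bigr)
\eqq
vanishes modulo $\pi$ as $R\to\infty$. From~\eqref{phi}, $\phi\in W^{1,\infty}(\R)$ so $\abs{\phi}^2 - 1$ is uniformly continuous, and since it is also in $L^2(\R)$, it tends to $0$ at $\pm\infty$. Thus $\abs{\phi(x)}\to 1$, and in particular $\abs{\phi}\geq\tfrac12$ for $\abs{x}$ large, allowing one to write $u/\phi = 1 + w/\phi$ with $w/\phi\to 0$ at $\pm\infty$ by 1D Sobolev embedding. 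Choosing compatible continuous branches of $\arg u$ and $\arg\phi$ at infinity, one has $\arg u(\pm R)-\arg\phi(\pm R) = \arg\bigl(1+w(\pm R)/\phi(\pm R)\bigr)\to 0$, hence $\Delta(R)\to 0$.

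Putting the three ingredients together yields the identity $q(u) = \mathcal{L}(\phi) + \tfrac12\int_\R \langle iw',w\rangle\,dx + \int_\R\langle i\phi',w\rangle\,dx$. The main technical subtlety is the coordination between the vanishing boundary term $B(R)$ from integration by parts and the vanishing phase correction $\Delta(R)$; both rely essentially on the pointwise decay $w(\pm R)\to 0$, a feature specific to the one-dimensional setting which fails in higher dimensions and explains why the analogue~\eqref{mom-for2} must be formulated through the auxiliary operator $L_j$ of Lemma~\ref{des-phi}.
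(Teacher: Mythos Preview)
The paper does not supply its own proof of this lemma: it is stated with a citation to \cite{bethuel-black} and no argument is given in the text. Your proposal therefore cannot be compared to a proof in the paper, but it is a correct and natural derivation of the identity. The three ingredients you isolate --- the bilinear expansion of $\langle iu',u\rangle$, the integration-by-parts on the cross term with boundary contribution $B(R)=\Im[w\overline\phi]_{-R}^{R}\to 0$ via $H^1(\R)\hookrightarrow C_0(\R)$, and the vanishing of the phase correction $\Delta(R)$ through $u/\phi\to 1$ --- are exactly what is needed, and the computation is sound. One minor comment: since $\mathcal L$ is only defined modulo $\pi$, the identity in the lemma is to be read modulo $\pi$ as well; your argument actually shows more, namely that for any choice of continuous branches of $\arg u$ and $\arg\phi$ near $\pm\infty$ the difference $\Delta(R)$ tends to $0$, so the congruence holds in the strongest possible sense.
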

Now that we have explained the notion of generalized momentum  in any
dimension, we can proceed to prove Theorem~\ref{teo-momentum}.

\begin{proof}[Proof of Theorem \ref{teo-momentum}]
In view of the continuous dependence of the flow, Lemma~\ref{mom-dim-1}, \eqref{mom-for2} and Proposition~\ref{prop-H2},
 we only need to prove the conservation of momentum for
$u_0=\phi+w_0$, with $w_0\in H^2(\R^N)$. Thus we assume that $u-\phi=w\in C(\R,H^2(\R^N))\cap C^1(\R,L^2(\R^N))$.
Integrating by parts we have that for any
$j=1,\dots,N$ and  $t\in \R$,
\begin{align*}%\label{der-mom2}
\partial_t q_j(u(t))
&=\partial_t \left( \frac{1}{2}\int_{\R^N}{\llave{i\partial_j w(t) ,w(t)}}\,dx+\int_{\R^N}{\llave{i\partial_j \phi ,w(t)}}\,dx \right)\\
&=\int_{\R^N}{\llave{i\partial_j (w(t)+\phi) ,\partial_tw(t)}}\,dx\\
&=\int_{\R^N}{\llave{i\partial_j u(t) ,\partial_t u(t)}}\,dx\\
&=\int_{\R^N}{\llave{\partial_j u(t) , \Delta u(t)+u(t)(W*(1-\abs{u(t)}^2))}}\,dx.
\end{align*}
Since $\abs{\grad u(t)}^2\in W^{1,1}(\R^N)$, an integration by parts leads to
\begin{ecu0}\label{der-mom2}
\partial_t q_j(u(t))
&={-}\frac{1}{2}\int_{\R^N} \partial_j \abs{\grad u(t)}^2\,dx+ \int_{\R^N}(W*(1-\abs{u(t)}^2)){\llave{u(t),\partial_j u(t)}}\,dx\\
&=\int_{\R^N}(W*(1-\abs{u(t)}^2)){\llave{u(t),\partial_j u(t)}}\,dx.
\end{ecu0}
Now we notice that
\bq\label{dar-W} \partial_j\!\left((1-\abs{u}^2) (W*(1-\abs{u}^2)) \right)=-2{\llave{u,{\partial_j u}}}(W*(1-\abs{u}^2))-2(1-\abs{u}^2)(W*{\llave{u,{\partial_j u}}}).
\eq
From \eqref{dar-W} and Lemma~\ref{lema-int-finita}, we have
\bq\label{dar-W2}\int_{\R^N}{\llave{u,{\partial_j u}}}(W*(1-\abs{u}^2))\,dx=\int_{\R^N} (1-\abs{u}^2)(W*{\llave{u,{\partial_j u}}})\,dx.
\eq
Since $\left( (1-\abs{u(t)}^2) (W*(1-\abs{u(t)}^2)) \right)\in W^{1,1}(\R^N)$, from \eqref{der-mom2}, \eqref{dar-W} and \eqref{dar-W2}
we infer that
$$\partial_t q_j(u(t))={-}\frac14 \int_{\R^N} \partial_j\left((W*(1-\abs{u(t)}^2))(1-\abs{u(t)}^2)\right)\,dx=0,$$
concluding the proof.
\end{proof}
\begin{remark}
This argument also proves the conservation of momentum
stated in Theorem \ref{teo-local}.
\end{remark}

%%%%%%%%%%%%%%%%%%%%%%%%%%%%%%%%%%%%%%%%%%%%%%%%%%%%%%%%%%%%%%%%%%%%%%%%%%%%%%%%%%%%%
\subsection{The mass}
In a recent article, B\'ethuel et al. \cite{bethuel3} give a  definition
for the mass for the local Gross-Pitaevskii equation in the one-dimensional
case. In this subsection we try to extend this notion to higher dimensions.

Let $\chi\in C^\infty_0(\R;\R)$ be a function such that
$\chi(x)=1$ if $\abs{x}\leq 1$,
$\chi(x)=0$ if $\abs{x}\geq 2$ and $\LL{\chi'}{\infty},\LL{\chi''}{\infty}\leq 2.$
For any $R>0$, $a\in \R^N$, we set
$$\chi_{a,R}(x)=\chi\left(   \frac{\abs{x-a}}{R}\right),\quad  x\in \R^N$$
and the quantities
\bqq m^+(u)=\inf_{a\in \R^N}\limsup_{R\to \infty}\int_{\R^N}(1-\abs{u}^2)\chi_{a,R}\,dx, \quad
m^-(u)=\sup_{a\in \R^N}\liminf_{R\to \infty}\int_{\R^N}(1-\abs{u}^2)\chi_{a,R}\,dx.
\eqq
In the case that $1-\abs{u}^2 \in L^1(\R^N)$, $m^{+}(u)=m^{-}(u)$. More generally, if $u$ is such that
$m^{+}(u)=m^{-}(u)$, we define the {\em generalized mass} as
\bqq  m(u)\equiv m^{+}(u)=m^{-}(u).\eqq

The following result is a more accurate version of Theorem~\ref{teo-masa} and shows that the generalized mass is conserved
if $N\leq 4$. However, we need a faster decay for $\phi$ in dimensions
three and four, which is at least satisfied by the travelling waves
in the local problem (see \cite{gravejat-decay}).

\begin{teo}
Let $1\leq N\leq 4$. In addition to \eqref{phi}, assume that
$\grad \phi\in L^\frac{N}{N-1}(\R^N)$ if $N=3,4$.
Suppose that $u_0\in \phi+H^1(\R^N)$ with $m^+(u_0)$ \lp respectively $m^{-}(u_0)$\rp  finite. Then
the associated solution of \eqref{NGP} given by Theorem~\ref{global} satisfies
$m^+(u(t))=m^+(u_0)$ \lp respectively $m^-(u(t))=m^-(u_0)$\textup{)}, for any $t\in \R$. In particular,
if  $u_0$ has finite generalized mass, then the generalized mass is conserved by the flow, that is
$ m(u(t))=m(u_0)$, for any $t\in\R$.
\end{teo}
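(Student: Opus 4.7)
The plan is to derive a localized mass balance for $\int(1-\abs{u}^2)\chi_{a,R}\,dx$ and show that its time derivative is $o(1)$ as $R\to\infty$, uniformly on compact time intervals. By Proposition~\ref{prop-H2} and the continuous dependence given by Theorem~\ref{global}, I would first establish the identity below for $u_0\in\phi+H^2(\R^N)$ and then extend to general $w_0\in H^1(\R^N)$ by approximation, since for fixed $(a,R,t)$ both sides of the identity depend continuously on $w_0\in H^1(\R^N)$.

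For $u-\phi\in C(\R,H^2(\R^N))$, a direct calculation from \eqref{NGP} (using that $W*(1-\abs{u}^2)$ is real) gives $\partial_t(1-\abs{u}^2)=2\div j(u)$, where $j(u)=\Im(\bar u\nabla u)$. Multiplying by $\chi_{a,R}$, integrating by parts in space, and then integrating in time yields
\[
\int(1-\abs{u(t)}^2)\chi_{a,R}\,dx-\int(1-\abs{u_0}^2)\chi_{a,R}\,dx=-2\int_0^t\int_{\R^N} j(u(s))\cdot\nabla\chi_{a,R}\,dx\,ds.
\]
Because $\nabla\chi_{a,R}$ is supported in the annulus $A_R=\{R\leq\abs{x-a}\leq 2R\}$ with $\abs{\nabla\chi_{a,R}}\leq C/R$, everything reduces to showing $(1/R)\int_{A_R}\abs{j(u(s))}\,dx\to 0$ as $R\to\infty$, uniformly in $s\in[0,t]$.

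I would decompose $u=\phi+w$, expand $j(u)$ into four cross terms, and estimate each one. The pieces $\Im(\bar w\nabla w)$ and $\Im(\bar w\nabla\phi)$ are handled by Cauchy--Schwarz: since $s\mapsto w(s)$ is continuous into $H^1(\R^N)$, the orbit is compact and a standard uniform-tails argument gives $\sup_{s\in[0,t]}(\norm{w(s)}_{L^2(A_R)}+\norm{\nabla w(s)}_{L^2(A_R)})\to 0$, which with the $1/R$ factor kills both contributions. The pure-$\phi$ piece $\Im(\bar\phi\nabla\phi)$ is treated by H\"older: for $N=1,2$, the bound $\norm{\phi}_{L^\infty}R^{N/2-1}\norm{\nabla\phi}_{L^2(A_R)}$ already decays, while for $N=3,4$ the extra hypothesis $\nabla\phi\in L^{N/(N-1)}(\R^N)$ yields
\[
\frac{1}{R}\int_{A_R}\abs{\phi}\abs{\nabla\phi}\,dx\leq \norm{\phi}_{L^\infty}\frac{\abs{A_R}^{1/N}}{R}\norm{\nabla\phi}_{L^{N/(N-1)}(A_R)}\leq C\norm{\phi}_{L^\infty}\norm{\nabla\phi}_{L^{N/(N-1)}(A_R)}\to 0.
\]

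The main obstacle is the mixed term $\int\Im(\bar\phi\nabla w)\cdot\nabla\chi_{a,R}\,dx$, since neither a crude $L^\infty\times L^2$ estimate nor a Sobolev bound on $w$ alone produces an integrand of size $o(R)$ once $N\geq 3$. My remedy is to integrate by parts in the $\nabla w$ factor,
\[
\int\bar\phi\nabla w\cdot\nabla\chi_{a,R}\,dx=-\int(\nabla\bar\phi\cdot\nabla\chi_{a,R})w\,dx-\int\bar\phi\,w\,\Delta\chi_{a,R}\,dx,
\]
so that the first piece is absorbed into the Cauchy--Schwarz estimate for $\nabla\phi\cdot w$ already discussed, and the second is controlled using $\abs{\Delta\chi_{a,R}}\leq C/R^2$ on $A_R$ and $\abs{A_R}\leq C R^N$ by $C R^{N/2-2}\norm{w(s)}_{L^2(A_R)}$, which tends to zero precisely when $N\leq 4$; this is the origin of the dimensional restriction. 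Combining all four estimates and integrating in $s\in[0,t]$ shows the right-hand side of the identity is $o(1)$ as $R\to\infty$, whence
\[
\limsup_{R\to\infty}\int(1-\abs{u(t)}^2)\chi_{a,R}\,dx=\limsup_{R\to\infty}\int(1-\abs{u_0}^2)\chi_{a,R}\,dx
\]
for every $a\in\R^N$; taking $\sup_a$ (respectively replacing $\limsup$ and $\sup$ by $\liminf$ and $\inf$) yields $m^+(u(t))=m^+(u_0)$ (resp.\ $m^-(u(t))=m^-(u_0)$), and hence conservation of the generalized mass whenever $m^+(u_0)=m^-(u_0)$.
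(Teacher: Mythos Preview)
Your proof is correct and follows essentially the same strategy as the paper: derive the localized identity via $H^2$ approximation, decompose the current $j(u)=\Im(\bar u\nabla u)$ according to $u=\phi+w$, integrate by parts in the troublesome $\bar\phi\,\nabla w$ piece to produce a $\Delta\chi_{a,R}$ factor (which is where the restriction $N\leq 4$ enters, via $\|\Delta\chi_{a,R}\|_{L^2}\leq CR^{(N-4)/2}$), and treat the pure-$\phi$ term with H\"older and the hypothesis $\nabla\phi\in L^{N/(N-1)}$. The only cosmetic difference is that you obtain uniformity in $s\in[0,t]$ by a compact-orbit/uniform-tails argument, whereas the paper uses the explicit a~priori bounds on $\|w(t)\|_{H^1}$ from the global existence proof together with dominated convergence; both routes are equally valid.
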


\begin{proof}
Let $u_0=\phi+w_0$ and $u=\phi+w$, $w_0\in H^1(\R^N)$, $w\in C(\R,H^1(\R^N))\cap C^1(\R,H^{-1}(\R^N))$. We take a sequence
 $w_{0,n}\in H^2(\R^N)$ such that $w_{0,n}\to w_0$ in $H^1(\R^N)$.
By Proposition~\ref{prop-H2} and the continuous dependence property of
Theorem~\ref{global}, the solutions
$u_n=\phi+w_n$ of \eqref{NGP} with initial data $\phi+w_{0,n}$ satisfy
\bq\label{conver-masa} w_n\in C(\R,H^2(\R^N))\cap C^1(\R,L^2(\R^N)) \quad \textup{ and }\quad w_n\to w \textup{ in } C(I,H^1(\R^N)),\eq
for any bounded closed interval $I$.

Setting $\eta(t)=1-\abs{u(t)}^2$, $\eta_n(t)=1-\abs{u_n(t)}^2$
and using that the functions $u_n$ are solution of \eqref{NGP}, it follows
\bqq \partial_t \eta_n(t)=-2\Re(i\overline{u}_n(t)\Delta u_n(t)).\eqq
Then integrating by parts%we conclude that %$\eta_n \in C^1(\R,L^1(\R^N))$ and then
\bq\label{der-eta}
\partial_t \left( \int_{\R^N}\eta_n(t) \chi_{a,R} \, dx\right)=\int_{\R^N}\partial_t \eta_n(t) \chi_{a,R} \,dx
=I_1(t)+I_2(t)+I_3,\eq
with
\begin{align*}
I_1(t)&=-2\Im \int_{\R^N} (\overline w_n(t) \grad w_n(t)+\overline w_n(t) \grad \phi)\grad \chi_{a,R} \,dx,\\
I_2(t)&=-2\Im \int_{\R^N}\overline \phi \grad w_n(t) \grad \chi_{a,R} \,dx,\\
I_3&=-2\Im \int_{\R^N}\overline \phi \grad \phi \grad \chi_{a,R} \,dx.
\end{align*}
Noticing that $\LL{\Delta \chi_{a,R}}{2}\leq C R^{\frac{N-4}{2}},$
we have that $\LL{\grad \chi_{a,R}}{\infty}$
and $\LL{\Delta \chi_{a,R}}{2}$
are uniformly bounded in $a$ and $R$. Setting
$$\Omega_{a,R}=\{x\in \R^N : R<\abs{x-a}<2R \}$$
and using the Cauchy-Schwarz inequality we have
\bq\label{cota-1}
\abs{I_1(t)}\leq C(\phi) \norm{w_n(t)}_{L^2(\Omega_{a,R})}( \norm{\grad w_n(t)}_{L^2(\Omega_{a,R})}+1).
\eq
For $I_2$, we first integrate by parts
\bqq {I_2(t)}=2\Im \int_{\R^N}w_n(t)( \grad \overline\phi \grad \chi_{a,R}+\overline \phi \Delta \chi_{a,R}) \,dx,\eqq
thus
\bq\label{cota-2} \abs{I_2(t)}\leq C(\phi) \norm{w_n(t)}_{L^2(\Omega_{a,R})}.\eq
Using H\"older inequality, it follows that
\bq\label{cota-3} \abs{I_3}\leq
\begin{cases}
\LL{\phi}{\infty}\norm{\grad \phi}_{L^2(\Omega_{a,R})} \LL{\grad \chi_{a,R}}{2}, &\textup{ if }N=1\\
\LL{\phi}{\infty}\norm{\grad \phi}_{L^\frac{N}{N-1}(\Omega_{a,R})} \LL{\grad \chi_{a,R}}{N\vphantom{L\frac{N}{N-1}}}, &\textup{ if }2\leq N\leq 4.
\end{cases}
\eq
Note that the choice of $\chi$ implies that $\LL{\grad \chi_{a,R}}{N}$ is uniformly bounded in $a$ and $R$ in any dimension,
and so is $\LL{\grad \chi_{a,R}}{2}$ in dimension one. Then by putting
together \eqref{der-eta}-\eqref{cota-3}, we obtain
\bqq%\label{mas1}
\left|{\partial_t \left( \int_{\R^N}\eta_n(t) \chi_{a,R} dx\right)} \right|\leq
C(\phi)
(\norm{w_n(t)}_{{L^2(\Omega_{a,R})}}(1+\LL{\grad w_n(t)}{2})+\norm{\grad{\phi}}_{L^{N^*}(\Omega_{a,R})} ),
\eqq
with $N^*=2$ if $N=1$ and $N^*=\frac{N}{N-1}$ if $2\leq N\leq 4$.
Integrating this inequality between $0$ and $t$ and, by \eqref{conver-masa},  passing
to the limit we have
\begin{multline}\label{mas2}
 \left| \int_{\R^N}\eta(t)\chi_{a,R} \,dx -\int_{\R^N}\eta(0) \chi_{a,R} \,dx\right|\leq \\
C(\phi) \int_{0}^{\abs t}
\norm{w(s)}_{L^2(\Omega_{a,R})}(1+\LL{\grad w(s)}{2})\,ds +C(\phi)\abs{t}\norm{\grad{\phi}}_{L^{N^*}(\Omega_{a,R})}.
\end{multline}
From the proof of Theorem~\ref{global}, we deduce that for some constant $K$, depending only
on ${w_0}$, $E_0$, $\phi$ and $W,$
\bq\label{dom} \LL{w(t)}{2} \leq {K e^{K\abs{t}}},\quad \LL{\grad w(t)}{2} \leq {Ke^{K\abs{t}}}.\eq
Then, by Cauchy-Schwarz inequality,
\begin{align*}
 \int_{0}^{\abs t}
\norm{w(s)}_{L^2(\Omega_{a,R})}(1+\LL{\grad w(s)}{2})\,ds \leq&
Ke^{K\abs{t}}  \int_{0}^{\abs t} \norm{w(s)}_{L^2(\Omega_{a,R})} \,ds \nonumber \\
\leq& K e^{K\abs{t}}{\abs{t}^\frac12} \left(  \int_{0}^{\abs t}\int_{\Omega_{a,R}} \abs{w(s)}^2\,dx\,ds\right)^{\frac12}.
\end{align*}
This inequality together with \eqref{dom}, the dominated convergence theorem and
\eqref{mas2} imply that
\bqq\lim_{R\to \infty}\left( \int_{\R^N}(1-\abs{u(t)}^2)\chi_{a,R}\,dx  - \int_{\R^N}(1-\abs{u_0}^2)\chi_{a,R}\,dx \right)=0.\eqq
The conclusion follows from the definition of $m^+$, $m^-$ and $m$.
\end{proof}

An interesting open question is to extend the statement of Theorem~\ref{teo-masa}
to a more meaningful notion of mass such as
$${\mathfrak m}^+(u)=\inf_{a\in \R}\limsup_{R\to \infty}\int_{B(a,R)}(1-\abs{u}^2)\,dx, \quad  {\mathfrak m}^-(u)=\sup_{a\in \R}\liminf_{R\to \infty}\int_{B(a,R)}(1-\abs{u}^2)\,dx.$$
In fact, in the one-dimensional case, one can choose a test function $\chi$ such that
$$\norm{\chi_{a,R}}_{L^2(\supp(\grad \chi_{a,R}))}$$
is uniformly  bounded  in $a$ and $R$. Then one can see that Theorem~\ref{teo-masa}
remains true replacing $m$ by $\mathfrak m$, recovering a result of  B\'ethuel et al. (see Appendix in \cite{bethuel3}).
However, in higher dimensions we do not {know} if this is possible.

%%%%%%%%%%%%%%%%%%%%%%%%%%%%%%%%%%%%%%%%%%%%%%%%%%%%%%%%%%%%%%%%%%%%%%%%%%%%%%%%%
 \bibliographystyle{abbrv}   
\bibliography{ref}

\begin{thebibliography}{10}

\bibitem{aftalion}
A.~Aftalion, X.~Blanc, and R.~Jerrard.
\newblock Nonclassical rotational inertia of a supersolid.
\newblock {\em Phys. Rev. Lett.}, 99(13):135301.1--135301.4, 2007.

\bibitem{banica}
V.~Banica and L.~Vega.
\newblock On the {D}irac delta as initial condition for nonlinear
  {S}chr\"odinger equations.
\newblock {\em Ann. Inst. H. Poincar\'e Anal. Non Lin\'eaire}, 25(4):697--711,
  2008.

\bibitem{berloff}
N.~G. Berloff.
\newblock Nonlocal nonlinear {S}chr{\"o}dinger equations as models of
  superfluidity.
\newblock {\em J. Low Temp. Phys.}, 116(5-6):359--380, 1999.

\bibitem{berloff0}
N.~G. Berloff and P.~H. Roberts.
\newblock Motions in a {B}ose condensate {VI}. {V}ortices in a nonlocal model.
\newblock {\em J. Phys. A}, 32(30):5611--5625, 1999.

\bibitem{bethuel0}
F.~B\'ethuel, P.~Gravejat, and J.-C. Saut.
\newblock Existence and properties of travelling waves for the
  {G}ross-{P}itaevskii equation.
\newblock In A.~Farina and J.-C. Saut, editors, {\em Stationary and time
  dependent {G}ross-{P}itaevskii equations. Wolfgang Pauli Institute 2006
  thematic program, January--December, 2006, Vienna, Austria}, volume 473 of
  {\em Contemporary Mathematics}, pages 55--104. American Mathematical Society.

\bibitem{bethuel3}
F.~B\'ethuel, P.~Gravejat, J.-C. Saut, and D.~Smets.
\newblock On the {K}orteweg-de {V}ries long-wave approximation of the
  {G}ross-{P}itaevskii equation {II}.
\newblock Preprint.

\bibitem{bethuel-black}
F.~B{\'e}thuel, P.~Gravejat, J.-C. Saut, and D.~Smets.
\newblock Orbital stability of the black soliton for the {G}ross-{P}itaevskii
  equation.
\newblock {\em Indiana Univ. Math. J.}, 57(6):2611--2642, 2008.

\bibitem{bethuel2}
F.~B\'ethuel and J.-C. Saut.
\newblock Travelling waves for the {G}ross-{P}itaevskii equation {I}.
\newblock {\em Ann. Inst. H. Poincar\'e Phys. Th\'eor.}, 70(2):147--238, 1999.

\bibitem{remi}
R.~Carles, P.~A. Markowich, and C.~Sparber.
\newblock On the {G}ross-{P}itaevskii equation for trapped dipolar quantum
  gases.
\newblock {\em Nonlinearity}, 21(11):2569--2590, 2008.

\bibitem{cazenave}
T.~Cazenave.
\newblock {\em Semilinear {S}chr\"odinger equations}, volume~10 of {\em Courant
  Lecture Notes in Mathematics}.
\newblock New York University Courant Institute of Mathematical Sciences, New
  York, 2003.

\bibitem{coste}
C.~Coste.
\newblock Nonlinear {S}chr\"odinger equation and superfluid hydrodynamics.
\newblock {\em Eur. Phys. J. B Condens. Matter Phys.}, 1(2):245--253, 1998.

\bibitem{cuevas}
J.~Cuevas, B.~A. Malomed, P.~G. Kevrekidis, and D.~J. Frantzeskakis.
\newblock Solitons in quasi-one-dimensional {B}ose-{E}instein condensates with
  competing dipolar and local interactions.
\newblock {\em Phys. Rev. A}, 79(5):053608.1--053608.11, 2009.

\bibitem{deconinck}
B.~Deconinck and J.~N. Kutz.
\newblock Singular instability of exact stationary solutions of the non-local
  {G}ross-{P}itaevskii equation.
\newblock {\em Phys. Lett. A}, 319(1-2):97--103, 2003.

\bibitem{donnelly}
R.~J. Donnelly, J.~A. Donnelly, and R.~N. Hills.
\newblock Specific heat and dispersion curve for {H}elium {II}.
\newblock {\em J. Low Temp. Phys.}, 44(5-6):471--489, 1981.

\bibitem{fasshauer}
G.~E. Fasshauer.
\newblock {\em Meshfree approximation methods with {MATLAB}}, volume~6 of {\em
  Interdisciplinary Mathematical Sciences}.

\bibitem{feyman}
R.~P. Feynman.
\newblock Atomic theory of the two-fluid model of liquid {H}elium.
\newblock {\em Phys. Rev.}, 94(2):262--277, 1954.

\bibitem{gallo}
C.~Gallo.
\newblock The {C}auchy problem for defocusing nonlinear {S}chr\"odinger
  equations with non-vanishing initial data at infinity.
\newblock {\em Comm. Partial Differential Equations}, 33(4-6):729--771, 2008.

\bibitem{gerard3}
P.~G\'erard.
\newblock The {G}ross-{P}itaevskii equation in the energy space.
\newblock In A.~Farina and J.-C. Saut, editors, {\em Stationary and time
  dependent {G}ross-{P}itaevskii equations. Wolfgang Pauli Institute 2006
  thematic program, January--December, 2006, Vienna, Austria}, volume 473 of
  {\em Contemporary Mathematics}, pages 129--148. American Mathematical
  Society.

\bibitem{gerard}
P.~G{\'e}rard.
\newblock The {C}auchy problem for the {G}ross-{P}itaevskii equation.
\newblock {\em Ann. Inst. H. Poincar\'e Anal. Non Lin\'eaire}, 23(5):765--779,
  2006.

\bibitem{grafakos}
L.~Grafakos.
\newblock {\em Classical {F}ourier analysis}, volume 249 of {\em Graduate Texts
  in Mathematics}.
\newblock Springer, New York, second edition, 2008.

\bibitem{gravejat-decay}
P.~Gravejat.
\newblock Decay for travelling waves in the {G}ross-{P}itaevskii equation.
\newblock {\em Ann. Inst. H. Poincar\'e Anal. Non Lin\'eaire}, 21(5):591--637,
  2004.

\bibitem{gross}
E.~Gross.
\newblock Hydrodynamics of a superfluid condensate.
\newblock {\em J. Math. Phys.}, 4(2):195--207, 1963.

\bibitem{nakanishi}
S.~Gustafson, K.~Nakanishi, and T.-P. Tsai.
\newblock Scattering for the {G}ross-{P}itaevskii equation.
\newblock {\em Math. Res. Lett.}, 13(2-3):273--285, 2006.

\bibitem{hormander}
L.~H{\"o}rmander.
\newblock {\em The analysis of linear partial differential operators {I}}.
\newblock Classics in Mathematics. Springer-Verlag, Berlin, 2003.

\bibitem{JPR2}
C.~A. Jones, S.~J. Putterman, and P.~H. Roberts.
\newblock Motions in a {B}ose condensate {V}. {S}tability of solitary wave
  solutions of non-linear {S}chr\"odinger equations in two and three
  dimensions.
\newblock {\em J. Phys. A, Math. Gen.}, 19(15):2991--3011, 1986.

\bibitem{JPR1}
C.~A. Jones and P.~H. Roberts.
\newblock Motions in a {B}ose condensate {IV}. {A}xisymmetric solitary waves.
\newblock {\em J. Phys. A, Math. Gen.}, 15(8):2599--2619, 1982.

\bibitem{rica}
C.~Josserand, Y.~Pomeau, and S.~Rica.
\newblock Coexistence of ordinary elasticity and superfluidity in a model of a
  defect-free supersolid.
\newblock {\em Phys. Rev. Lett.}, 98(19):195301.1--195301.4, 2007.

\bibitem{kivshar}
Y.~S. {Kivshar} and B.~{Luther-Davies}.
\newblock {Dark optical solitons: physics and applications}.
\newblock {\em Phys. Rep.}, 298(2-3):81--197, 1998.

\bibitem{krantz}
S.~G. Krantz and H.~R. Parks.
\newblock {\em The geometry of domains in space}.
\newblock Birkh\"auser Advanced Texts, Basel Textbooks. Birkh\"auser Boston
  Inc., Boston, MA, 1999.

\bibitem{landau}
L.~Landau.
\newblock Theory of the superfluidity of {H}elium {II}.
\newblock {\em Phys. Rev.}, 60(4):356--358, 1941.

\bibitem{lebedev}
N.~N. Lebedev.
\newblock {\em Special functions and their applications}.
\newblock Revised English edition. Translated and edited by Richard A.
  Silverman. Prentice-Hall Inc., Englewood Cliffs, N.J., 1965.

\bibitem{maris}
M.~Mari{\c{s}}.
\newblock Traveling waves for nonlinear {S}chr{\"o}dinger equations with
  nonzero conditions at infinity.
\newblock Preprint ar{X}iv 0903.0354.

\bibitem{pitaevskii}
L.~Pitaevskii.
\newblock Vortex lines in an imperfect {B}ose gas.
\newblock {\em Sov. Phys. JETP}, 13(2):451--454, 1961.

\bibitem{rica0}
Y.~Pomeau and S.~Rica.
\newblock Model of superflow with rotons.
\newblock {\em Phys. Rev. Lett.}, 71(2):247--250, 1993.

\bibitem{sc}
L.~Schwartz.
\newblock {\em Th\'eorie des distributions}.
\newblock Publications de l'Institut de Math\'ematique de l'Universit\'e de
  Strasbourg, No. IX-X. Nouvelle \'edition, enti\'erement corrig\'ee, refondue
  et augment\'ee. Hermann, Paris, 1966.

\bibitem{kraenkel}
V.~S. Shchesnovich and R.~A. Kraenkel.
\newblock Vortices in nonlocal {G}ross-{P}itaevskii equation.
\newblock {\em J. Phys. A}, 37(26):6633--6651, 2004.

\bibitem{stein}
E.~M. Stein.
\newblock {\em Singular integrals and differentiability properties of
  functions}.
\newblock Princeton Methematical Series, No. 30. Princeton University Press,
  1970.

\bibitem{yi}
S.~Yi and L.~You.
\newblock Trapped condensates of atoms with dipole interactions.
\newblock {\em Phys. Rev. A}, 63(5):053607.1--053607.14, 2001.

\end{thebibliography}

\end{document}